\newtheorem{theorem}{Theorem}[section]
\newtheorem{lemma}[theorem]{Lemma}
\newtheorem{definition}[theorem]{Definition}
\theoremstyle{definition}  
\newtheorem{example}{Example}[section]
\newtheorem{remark}{Remark}
\theoremstyle{plain}  
\newtheorem*{assumption}{Assumption}
\begin{document}

\begin{frontmatter}

\title{Onsager--Machlup Functional for SDE with Time-Varying Fractional Noise\tnoteref{1}}

\author[inst1]{Yanbin Zhu}
\ead{zhuyb23@mails.jlu.edu.cn}
 \author[inst1]{Xiaomeng Jiang\corref{cor1}}
 \ead{jxmlucy@hotmail.com}
 \author[inst1,inst2]{Yong Li}
 \ead{liyong@jlu.edu.cn}
 \address[inst1]{College of Mathematics, Jilin University, Changchun 130012, PR China}
 \address[inst2]{School of Mathematics and statistics and Center for Mathematics and Interdisciplinary Sciences, Northeast Normal University, Changchun 130024, PR China}
 \cortext[cor1]{Corresponding author.}
  \tnotetext[1]{This work is supported by the National Key R\&D Program of China (No.~2023YFA1009200), the National Key Project of the National Natural Science Foundation of China (Grant No.~12531009),  and the National Natural Science Foundation of China (Grant Nos.~12471183, 12071175).}

\begin{abstract}
In this paper, we derive the Onsager–Machlup functional for stochastic differential equations driven by time-varying fractional noise:
\begin{equation*}
      X_t = x_0 + \int_0^t b_s(X_s)\,ds + \int_0^t \sigma_s\,dB^H_s.
\end{equation*}
Our main results are established for $H \in (1/4,1)$ by extending small ball probability estimates and the Girsanov theorem for fractional Brownian motion to the case with time-dependent coefficients. Regarding the choice of norms, for $ \frac{1}{4} < H < \frac{1}{2} $, the analysis is valid under the supremum norm and Hölder norms of order $ 0 < \beta < H - \frac{1}{4} $. For $ \frac{1}{2} < H < 1 $, the analysis applies to Hölder norms of order $\beta$ satisfying $H - \frac{1}{2} < \beta < H - \frac{1}{4}$. In the case $H=1/2$, the admissible norms depend on the spatial regularity of the drift coefficient $b$. Specifically, if $b$ is $n$-times continuously differentiable, then Hölder norms of order $0<\beta<\frac{1}{2}-\frac{1}{2n}$ are permissible. To validate our theoretical findings, we conduct numerical simulations for a classical double-well potential system, illustrating how time-varying fractional noise affects the transition dynamics between metastable states.
\end{abstract}

\begin{keyword}
Onsager–Machlup functional \sep fractional Brownian motion \sep small ball probability estimates \MSC [2020] 60H10 \sep 60G22 \sep 60F10
\end{keyword}

\end{frontmatter}

\allowdisplaybreaks

\section{Introduction}
 Consider the following stochastic differential equation (SDE) driven by time-varying fractional noise:
\begin{equation}
    X_t = x_0 + \int_0^t b_s(X_s)\,ds + \int_0^t \sigma_s\,dB^H_s, \label{fir}
\end{equation}
where $\sigma$ is a deterministic function of time satisfying a non-degeneracy condition, and $B^H = \{B^H_t, t \in [0,1]\}$ is a fractional Brownian motion with Hurst parameter $H \in (0,1)$. 
The integral with respect to the fractional Brownian motion is interpreted in the Wiener sense.

This paper investigates the limiting behavior of the following ratio:
\begin{equation*}
    \frac{\mathbb{P}\big(\Vert X_\cdot - \phi_\cdot \Vert \leq \varepsilon\big)}{\mathbb{P}\big(\Vert \textstyle{\int_0^\cdot} \sigma_s\,dB^H_s \Vert \leq \varepsilon\big)},
\end{equation*}
as $\varepsilon \to 0$. We define the Onsager-Machlup functional by
\begin{equation}
    \exp\big(J(\phi,\dot{\phi})\big) = \lim_{\varepsilon \to 0} \frac{\mathbb{P}\big(\Vert X_\cdot - \phi_\cdot \Vert \leq \varepsilon\big)}{\mathbb{P}\big(\Vert \textstyle{\int_0^\cdot} \sigma_s\,dB^H_s \Vert \leq \varepsilon\big)},\label{omfunctional}
\end{equation}
where $\phi - x_0=K_H^\sigma(\dot{\phi})$, $ \dot{\phi}\in L^2([0,1])$ . The operator $K_H^\sigma$, which depends on both $\sigma$ and $H$, will be defined subsequently.

In this work, we establish a general form of the Onsager–Machlup (OM) functional for SDEs driven by time-varying fractional noise. Our main result can be expressed as
\begin{equation}
    J(\phi, \dot{\phi}) = -\frac{1}{2}
    \int_0^1 \left( \dot{\phi}_s - (K_H^\sigma)^{-1}\!\left( \int_0^\cdot b_u(\phi_u) \, du \right)(s) \right)^2 
    + d_H \, \partial_x b_s(\phi_s) \, ds ,
\end{equation}
where $d_H$ is a constant depending on $H$.  The explicit form of the operator $(K_H^{\sigma})^{-1}$ in our results varies with 
$H \in (1/4, 1/2)$, $H = 1/2$, and $H \in (1/2, 1)$, 
as detailed in Theorems~\ref{result1}, \ref{result2}, and~\ref{result3}.
This result extends the work of Zhang et al.~\cite{zhang2024onsagermachlupfunctionalstochasticdifferential} to the case of fractional noise with time-dependent coefficients.  
Our proof combines a generalized Girsanov transformation with techniques from fractional calculus \cite{Nualart}. By expressing the fractional Brownian motion as an integral with respect to standard Brownian motion,
\begin{equation*}
    \int_0^T \psi_s \, dB^H_s
    = \int_0^T (K_H^* \psi)(s) \, dW_s ,
\end{equation*}
we obtain explicit expressions for the exponential term in the OM functional and derive small ball estimates under non-stationary increments.  
For $H=\tfrac{1}{2}$, we recover stronger results under Hölder norms using Itô’s calculus.

The Onsager–Machlup functional characterizes the most probable paths of diffusion processes and serves as an analogue to the Lagrangian in dynamical systems, describing the optimal evolution trajectory of a particle along a given path. A fundamental challenge is to understand how systems transition between states under stochastic perturbations. Given two states $x_1$ and $x_2$, and considering paths $\phi$ from an appropriate function space connecting $x_1$ to $x_2$, the path that maximizes $\exp(J(\phi,\dot{\phi}))$ can be determined via variational methods. This optimal path represents the most probable transition path from $x_1$ to $x_2$. Thus, the Onsager–Machlup functional provides a deterministic characterization of the most probable transition path through the variational principle, with significant applications in physics and chemistry (see \cite{BATTEZZATI2013163,Durr1979}).

The Onsager–Machlup functional was first introduced by Onsager and Machlup \cite{Onsager1953} in 1953. Ikeda and Watanabe \cite{ikeda1981stochastic} later established rigorous results for $\phi \in C^2([0,1], \mathbb{R}^d)$ under the supremum norm. Moret and Nualart \cite{Nualart} pioneered the study of the Onsager–Machlup functional for SDEs driven by fractional Brownian motion, employing techniques from fractional calculus and a fractional Girsanov transform. 

Recently, research on the Onsager-Machlup functional has attracted considerable attention. Zhang et al.~\cite{zhang2024onsagermachlupfunctionalstochasticdifferential} extended the theory to the case of time-varying Brownian motion, while in~\cite{zhang2025persistenceinvarianttoristochastic} they combined KAM theory with the Onsager-Machlup functional and large deviation principles for the stochastic nonlinear Schrödinger equation on infinite lattices, establishing the persistence and probabilistic stability of low-dimensional invariant tori. Huang et al. \cite{duanjq} derived the Onsager-Machlup functional for jump-diffusion processes using the probability flow approach for Markov processes. Carfagnini and Wang \cite{Carfagnini2024} interpret Loewner energy as the Onsager-Machlup functional for {$\text{SLE}_{\kappa}$} loop measure for any fixed $\kappa\in (0,\frac{1}{4}]$. Li and Li \cite{doi:10.1137/20M1310539} proved that the $\Gamma$-limit of the OM functional on the space of curves is the geometric form of the FW functional in a proper time scale $T=T(\varepsilon)$ as $\varepsilon \to 0$.

The paper is organized as follows. Section~\ref{perliminary} introduces notation and preliminary results. Section~\ref{main results} presents the main theorems and their proofs. Section~\ref{Numeriacal experiments} derives the Euler–Lagrange equation for the OM functional and validates our theory through numerical simulations of a double-well system.

\section*{List of Symbols}
The main notations used for function spaces and norms are summarized below.
\begin{longtable}[htbp]{cl}

\toprule
\textbf{Symbol} & \textbf{Description} \\
\midrule
\endfirsthead

\toprule
\textbf{Symbol} & \textbf{Description} \\
\midrule
\endhead

\midrule
\multicolumn{2}{r}{\textit{Continued on next page}} \\
\endfoot

\bottomrule
\endlastfoot

$L^p([a,b])$ & Space of measurable functions $f$ with \\
& $\displaystyle \|f\|_{L^p} = \left(\int_a^b |f(s)|^p\,ds\right)^{1/p} < \infty$. \\[0.4em]
$C([a,b])$ & Space of continuous functions on $[a,b]$. \\[0.4em]
$C_0([a,b])$ & Subspace of $C([a,b])$ with $f(a) = 0$. \\[0.4em]
$C^\alpha([a,b])$ & Space of $\alpha$-H\"older continuous functions with \\
& $\displaystyle [f]_\alpha = \sup_{s \ne t} \frac{|f_t - f_s|}{|t-s|^\alpha} < \infty$. \\[0.4em]
$C_0^\alpha([a,b])$ & Subspace of $C^\alpha([a,b])$ with $f(a) = 0$. \\[0.4em]
$\Vert \cdot \Vert_{L^p}$ & The $L^p$-norm. \\
$[\cdot]_\alpha$ & H\"older seminorm of order $\alpha$. \\
$\Vert \cdot \Vert_\alpha$ & H\"older norm of order $\alpha$. \\
$\Vert \cdot \Vert_\infty$ & Supremum norm. \\
$\Gamma(\cdot)$ & Gamma function. \\
$\mathrm{B}(\cdot,\cdot)$ & Beta function. \\
\end{longtable}

\section{Preliminaries}  \label{perliminary}
In this section, we recall some basic notations, assumptions, and lemmas that will be used in the sequel.

\subsection{Fractional calculus}

We first introduce the basic concepts of fractional calculus; for further details, refer to \cite{Samko1993}.
 \begin{definition}
   	Let $f\in L^1([a,b])$. The integrals 
   	\begin{align*}
   		(I_{a^+}^\alpha f)(x)&:=\frac{1}{\Gamma(\alpha)}\int_a^x (x-y)^{\alpha-1}f(y)dy,\quad x\geq a,\\
   		(I_{b^-}^\alpha f)(x)&:=\frac{1}{\Gamma(\alpha)}\int_x^b (y-x)^{\alpha-1}f(y)dy,\quad x\leq b,
   	\end{align*}
   	where $\alpha>0$, are respectively called the right and left Riemann--Liouville fractional integrals of order $\alpha$.

   \end{definition}
   
   For any $\alpha\geq 0,$ any $f\in L^p([a,b])$ and $g\in L^q([a,b])$ where $1/p+1/q\leq \alpha,$ we have:
   \begin{equation}
   	\int_a^b f(s)(I_{a^+}^\alpha g)(s)ds=\int _a^b (I_{b^-}^\alpha f)(s) g(s) ds.\label{ffubini}
   \end{equation}
   
   	
   	If $1 \leq p < \infty$, we denote by $I_{a^+}^\alpha(L^p)$ the image of $L^p([a,b])$ under the operator $I_{a^+}^\alpha$. Similarly, $I_{b^-}^\alpha(L^p)$ can be defined.

   	\begin{definition}
   		 Let $f\in I_{a^+}^\alpha(L^p) $, $g\in I_{b^-}^\alpha(L^p) $.
   		 Each of the expressions 
   		 \begin{align*}
   		(D_{a^+}^\alpha f)(x)&:=\left(\frac{d}{dx} \right)^{[\alpha]+1}I_{a^+}^{1+[\alpha]-\alpha}f(x),\\
   		(D_{b^-}^\alpha g)(x)&:=\left(-\frac{d}{dx} \right)^{[\alpha]+1}I_{b^-}^{1+[\alpha]-\alpha}g(x),
   	   	\end{align*}
   		 are respectively called the right and left fractional derivative.
   \end{definition}
  From \eqref{ffubini}, we deduce the formula  
\begin{equation}
  \int_a^b f(s)(D_{a^+}^\alpha g)(s)ds
  =\int _a^b (D_{b^-}^\alpha f)(s) g(s) ds,\quad 0<\alpha<1, \label{fffubini}
\end{equation}
which holds under the assumptions that \( f\in I^\alpha_{b^-}(L^p) \) and \( g\in I^\alpha_{a^+}(L^q) \) satisfy \( 1/p+1/q\leq 1+\alpha \).
   
	If $f\in I_{a^+}^\alpha(L^p) $, the function $ \phi $ such that $f=I_{a^+}^\alpha(\phi)$ is unique in $L^p $. Fractional derivatives can be regarded as the inverse operation of fractional integrals.
 
 When $\alpha p > 1$, any function in $I_{a^+}^\alpha(L^p)$ is 
$(\alpha - \tfrac{1}{p})$-H\"older continuous. Moreover, every H\"older continuous 
function of order $\beta > \alpha$ admits a fractional derivative of order 
$\alpha$; see \cite[Proposition~2.1]{Decreusefond1999}.

Although fractional derivatives are originally defined as derivatives of 
fractional integrals, they also admit an explicit representation in terms of 
Weyl’s formula (see \cite[Remark~5.3]{Samko1993}):
\begin{equation}
D^\alpha_{a^+} f(x) 
= \frac{1}{\Gamma(1-\alpha)} \left( \frac{f(x)}{(x-a)^\alpha} 
+ \alpha \int_a^x \frac{f(x) - f(y)}{(x-y)^{\alpha+1}}\,dy \right), 
\quad 0<\alpha<1,
\label{Weyl}
\end{equation}
where the improper integral converges in the $L^p$ sense. Therefore, if \( f \) has H\"older continuity with an exponent strictly greater than \( \alpha \),  the fractional derivative of order \( \alpha \) exists.

  	For the case of the right fractional derivative, we can also easily prove a similar representation.
    \begin{lemma}
Let $0<\alpha<1$ and $f \in I_{T^-}^\alpha\!\left(L^2[0,T]\right)$. Then
\begin{equation}
D_{T^-}^\alpha f(s) 
= \frac{1}{\Gamma(1-\alpha)} \left(
    \frac{f(s)}{(T-s)^\alpha}
    - \alpha \int_s^T \frac{f(u)-f(s)}{(u-s)^{\alpha+1}}\,du
  \right).
\end{equation}
\end{lemma}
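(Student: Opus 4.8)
The plan is to reduce the claim to the already-established Weyl representation \eqref{Weyl} for the left fractional derivative $D_{0^+}^\alpha$ by reflecting the interval. First I would introduce the reflected function $\tilde f(x) := f(T-x)$ on $[0,T]$ and record the change-of-variables identity relating the two one-sided fractional integrals. Substituting $u = T-y$ in the definition of $I_{0^+}^\alpha$, one checks directly that
\[
(I_{0^+}^\alpha \tilde f)(T-s) = (I_{T^-}^\alpha f)(s), \qquad s \in [0,T],
\]
for every $\alpha > 0$. Since the reflection is measure-preserving, this is valid for $f \in L^1$ and, in particular, maps $I_{T^-}^\alpha(L^2)$ onto $I_{0^+}^\alpha(L^2)$. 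Thus the hypothesis $f \in I_{T^-}^\alpha(L^2)$ guarantees $\tilde f \in I_{0^+}^\alpha(L^2)$, which is exactly what is needed to apply \eqref{Weyl} to $\tilde f$.

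Next I would transfer this identity to the level of derivatives. Writing the integral identity with exponent $1-\alpha$ in place of $\alpha$, and recalling that for $0<\alpha<1$ one has $D_{T^-}^\alpha = -\tfrac{d}{ds}\,I_{T^-}^{1-\alpha}$ while $D_{0^+}^\alpha = \tfrac{d}{dx}\,I_{0^+}^{1-\alpha}$, the chain rule applied to $s \mapsto (I_{0^+}^{1-\alpha}\tilde f)(T-s)$ produces a factor $-1$ that cancels the sign built into the right derivative, yielding
\[
(D_{T^-}^\alpha f)(s) = (D_{0^+}^\alpha \tilde f)(T-s).
\]
I expect the careful bookkeeping of this sign, together with the justification that the differentiation commutes with the reflection so that the $L^2$-sense of \eqref{Weyl} is preserved, to be the only genuinely delicate point of the argument.

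Finally I would insert the Weyl formula \eqref{Weyl} for $(D_{0^+}^\alpha \tilde f)$ evaluated at $x = T-s$, use $\tilde f(T-s) = f(s)$ together with $\tilde f(y) = f(T-y)$, and change variables back via $y = T-u$ in the singular integral, under which $x-y = (T-s)-y = u-s$. This turns $\tfrac{\tilde f(x)}{x^\alpha}$ into $\tfrac{f(s)}{(T-s)^\alpha}$ and converts $\alpha\int_0^{T-s}\tfrac{\tilde f(x)-\tilde f(y)}{(x-y)^{\alpha+1}}\,dy$ into $-\alpha\int_s^T\tfrac{f(u)-f(s)}{(u-s)^{\alpha+1}}\,du$, reproducing exactly the stated expression; the $L^2$-convergence of the improper integral is inherited from \eqref{Weyl} because the substitution is merely a reflection.
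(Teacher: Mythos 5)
Your proof is correct, but it takes a genuinely different route from the paper's. The paper argues by direct computation: it restricts to $f \in C^1([0,T])$, writes $D_{T^-}^\alpha f(s) = -\tfrac{1}{\Gamma(1-\alpha)}\tfrac{d}{ds}\int_s^T (u-s)^{-\alpha}f(u)\,du$, splits the integrand as $\bigl(f(u)-f(s)\bigr) + f(s)$, differentiates under the integral sign, and collects terms; the extension to general $f \in I_{T^-}^\alpha(L^2)$ is then dismissed in one sentence ``by mollification.'' You instead reduce the right-sided formula to the already-quoted left-sided Weyl representation \eqref{Weyl} via the reflection $\tilde f(x) = f(T-x)$: the identities $(I_{0^+}^\alpha \tilde f)(T-s) = (I_{T^-}^\alpha f)(s)$ and $(D_{T^-}^\alpha f)(s) = (D_{0^+}^\alpha \tilde f)(T-s)$ are both elementary to verify (the minus sign from the chain rule cancels the minus sign built into the definition of $D_{T^-}^\alpha$, exactly as you note), and the back-substitution $u = T-y$ reproduces the stated formula, including the minus sign in front of the singular integral. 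Your approach buys rigor at the natural level of generality: it operates directly under the hypothesis $f \in I_{T^-}^\alpha(L^2)$, since reflection maps $I_{T^-}^\alpha(L^2)$ onto $I_{0^+}^\alpha(L^2)$, and the $L^2$-sense convergence of the improper integral is inherited from the cited result, so no smoothness assumption or density argument is needed. What it costs is self-containedness: the analytic content is outsourced to \eqref{Weyl} (Samko et al., Remark~5.3), whereas the paper's calculation exhibits explicitly where each term of the formula comes from. Given that the paper's handling of low regularity is only sketched, your argument is, in that respect, the more complete of the two.
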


	\begin{proof}
	We restrict ourselves to the case $f \in C^1([0,T])$; situations with lower 
regularity can be treated by mollification. For $0<\alpha<1$, the right fractional derivative can be written as
\begin{align*}
&D^\alpha_{T^-} f(s)\\ 
=& \frac{-1}{\Gamma(1-\alpha)} \frac{d}{ds}\int_s^T (u-s)^{-\alpha} f(u)\,du \\
=& \frac{-1}{\Gamma(1-\alpha)} \frac{d}{ds}\left(
      \int_s^T (u-s)^{-\alpha}\big(f(u)-f(s)\big)\,du
      + f(s)\int_s^T (u-s)^{-\alpha}\,du \right) \\
=& \frac{-1}{\Gamma(1-\alpha)} \Bigg(
     - \lim_{u\to s} (u-s)^{-\alpha}\big(f(u)-f(s)\big) \\
    &\qquad\qquad\quad+ \int_s^T \alpha (u-s)^{-\alpha-1}\big(f(u)-f(s)\big)\,du \\
&\qquad\qquad\quad - \int_s^T (u-s)^{-\alpha} f'(s)\,du
     + \frac{(T-s)^{1-\alpha}}{1-\alpha} f'(s)
     - (T-s)^{-\alpha} f(s) \Bigg) \\
=& \frac{1}{\Gamma(1-\alpha)} \Bigg(
     \frac{f(s)}{(T-s)^\alpha}
     - \alpha \int_s^T \frac{f(u)-f(s)}{(u-s)^{\alpha+1}}\,du
   \Bigg).
\end{align*}

	\end{proof}

\subsection{Fractional Stochastic integration with respect to fractional Brownian motion}
      Let $W=\{W_t,t\in [0,1] \}$ be a Wiener process defined in the canonical probability space $(\Omega,\mathcal{F},\mathbb{P})$, where 
 $\Omega=C_0([0,1])$
   and $\mathbb{P}$ is the Wiener measure.
	A real-valued continuous process $\{B^H_t,\ t\in [0,T]\}$ is called a fractional Brownian motion with Hurst parameter $H\in (0,1)$ if it is a centered Gaussian process with covariance function  
\[
  \mathbb{E}[B^H_tB^H_s]
  = \tfrac{1}{2}\big(|t|^{2H}+|s|^{2H}-|t-s|^{2H}\big)
  := R_H(t,s).
\]
For simplicity, we assume through all that $\alpha = |H - 1/2|$.

When $H=\tfrac{1}{2}$, fractional Brownian motion reduces to the classical Brownian motion.  
However, if $H\neq \tfrac{1}{2}$, it is neither a semi-martingale nor a Markov process. Moreover, its sample paths belong to $C^{H-\varepsilon}_0([0,T])$ $P$-a.s. for every $\varepsilon>0$.

It is well known that an fractional Brownian motion $B^H_t$ admits a Wiener integral representation: there exists a deterministic kernel $K_H(t,s)$ and a standard Brownian motion $W$ such that  
\[
  B^H_t = \int_0^t K_H(t,s)\,dW_s,
\]
where
\[
  K_H(t,s)=
  \begin{cases}
    c_H\, s^{-\alpha}\displaystyle\int_s^t (u-s)^{\alpha-1}u^\alpha\,du, & H>\tfrac{1}{2}, \\[2ex]
    b_H\Big[(t/s)^{-\alpha}(t-s)^{-\alpha}
    +\alpha s^\alpha \int_s^t (u-s)^{-\alpha}u^{-(\alpha+1)}\,du\Big], & H<\tfrac{1}{2},
  \end{cases}
\]
with constants
\[
  c_H=\sqrt{\frac{H(2H-1)}{\mathrm{B}(2-2H,H-\tfrac{1}{2})}}, 
  \qquad 
  b_H=\sqrt{\frac{2H}{(1-2H)\mathrm{B}(1-2H,H+\tfrac{1}{2})}}.
\]

The operator \( K_H \), mapping from \( L^2([0,1]) \) to \( I_{0^+}^{H+\frac{1}{2}}(L^2([0,1])) \), is given by:  
\begin{equation}
	(K_Hh)(t) = \int_0^t K_H(t,s) h(s) \, ds.\label{kh operator}
\end{equation}

 From \cite[Lemma 10]{Nualart}, the operator \( K_H \) can be expressed using fractional integrals as follows:   
\[
(K_H h)(s) = \begin{cases}
\displaystyle
I^{1-2\alpha}_{0^+} \, s^{\alpha} I^{\alpha}_{0^+} \, s^{-\alpha} h, & H <1/2, \\
\displaystyle
I^{1}_{0^+} \, s^{\alpha} I^{\alpha}_{0^+} \, s^{-\alpha} h, & H >1/2.
\end{cases}
\]  
 The inverse operator \((K_H)^{-1}\) is then defined as:  
\begin{align}  
\notag (K_H)^{-1} h &= s^{\alpha} D^{\alpha}_{0^+} \, s^{-\alpha} D^{1-2\alpha}_{0^+} h, \quad H < 1/2, \\  
(K_H)^{-1} h &= s^{\alpha} D^{\alpha}_{0^+} \, s^{-\alpha} h', \quad H > 1/2,\label{g1/2}   
\end{align}    
for all \(h \in I^{H+\frac{1}{2}}_{0^+}(L^2)\). If \(h\) is differentiable, the operator simplifies to:  
\begin{equation} \label{l1/2}  
(K_H)^{-1} h = s^{-\alpha} I^{\alpha}_{0^+} \, s^{\alpha} h', \quad H <1/2.  
\end{equation}  
Moreover, the operator \((K_H)^{-1}\) preserves the adaptability property.

Stochastic integrals of deterministic functions with respect to a Gaussian process are called Wiener integrals.  Stochastic integrals with respect to fractional Brownian motion can be defined by  its Gaussianity. For the step function $I_{[0,t]}(\cdot)$, we assign the inner product 
\[
\langle I_{[0,t]}(\cdot), I_{[0,s]}(\cdot) \rangle_H = R_H(t,s).
\]
We denote by $\mathcal{H}$ the space obtained by completing the step functions under the above inner product.
Then the mapping $\mathcal{I}:I_{[0,t]}\longmapsto B^H_t$ corresponds to an isometry between the step function under the inner product $\langle  \cdot, \cdot \rangle_H $ and the Gaussian variable in $L^2(\mathbb{P})$. By extending this isometry, we obtain our integral $\mathcal{I}$ as an isometry from $\mathcal{H}$ to the space of Gaussian random variables in $L^2(\mathbb{P})$.

In order to illustrate the elements of $\mathcal{H}$ and the relation between the fractional Brownian  integral and the Brownian motion integral, we first introduce the following operator:
\[
  (K_H^*f)(s)=
  \begin{cases}
    c_H\Gamma(\alpha)\, s^{-\alpha}I_{1-}^\alpha\big(s^\alpha f_s\big), & H>\tfrac{1}{2}, \\[1ex]
    b_H\Gamma(1-\alpha)\, s^\alpha D^\alpha_{1^-}\big(s^{-\alpha}f_s\big), & H<\tfrac{1}{2}.
  \end{cases}
\]

\begin{theorem}[\cite{Biagini2008}]
Let $H\in(0,1)$. 
If $\psi\in\mathcal{H}$, we have
\begin{equation*}
  \int_0^T \psi_s\,dB^H_s 
  := \int_0^T (K_H^*\psi)(s)\,dW_s.
\end{equation*}
\end{theorem}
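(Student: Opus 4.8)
The plan is to prove the identity first on indicator functions, then extend it to all of $\mathcal{H}$ by linearity and density, using the fact that both sides define continuous linear maps into $L^2(\mathbb{P})$. The conceptual point is that $K_H^*$ is built precisely so that it maps the abstract inner product $\langle\cdot,\cdot\rangle_H$ on $\mathcal{H}$ isometrically onto the $L^2([0,T])$ inner product, under which the Wiener integral against $W$ becomes an isometry into $L^2(\mathbb{P})$.

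First I would verify the key kernel identity $(K_H^* I_{[0,t]})(s) = K_H(t,s)$ for $s\in[0,T]$. Fix $t\in(0,T]$. For $H>\tfrac12$, since $I_{[0,t]}(u)$ vanishes for $u>t$, the right fractional integral $I_{T^-}^\alpha$ applied to $u^\alpha I_{[0,t]}(u)$ truncates the integration range to $[s,t]$, giving
\[
(K_H^* I_{[0,t]})(s) = c_H\Gamma(\alpha)\,s^{-\alpha}\,\frac{1}{\Gamma(\alpha)}\int_s^t (u-s)^{\alpha-1}u^\alpha\,du = c_H s^{-\alpha}\int_s^t (u-s)^{\alpha-1}u^\alpha\,du,
\]
which is exactly $K_H(t,s)$ (and zero for $s>t$). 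For $H<\tfrac12$, I would insert the Weyl representation of $D_{T^-}^\alpha$ from the Lemma applied to $g(u):=u^{-\alpha}I_{[0,t]}(u)$, carefully separating the contribution of the jump of $g$ at $u=t$; after collecting the boundary term $(t/s)^{-\alpha}(t-s)^{-\alpha}$ together with the integral $\alpha s^\alpha\int_s^t (u-s)^{-\alpha}u^{-(\alpha+1)}\,du$, the result again matches $K_H(t,s)$.

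Next I would establish the isometry on step functions. Using the Wiener representation $B^H_t=\int_0^t K_H(t,r)\,dW_r$ and the It\^o isometry for $W$ (with the kernel extended by zero), for indicators I have
\[
\langle I_{[0,t]}, I_{[0,s]}\rangle_H = R_H(t,s) = \mathbb{E}[B^H_t B^H_s] = \int_0^T K_H(t,r)K_H(s,r)\,dr = \langle K_H^* I_{[0,t]}, K_H^* I_{[0,s]}\rangle_{L^2([0,T])},
\]
where the last equality is the kernel identity just proved. By bilinearity this yields $\Vert K_H^*\psi\Vert_{L^2([0,T])}=\Vert\psi\Vert_{\mathcal{H}}$ for every step function $\psi$, so $K_H^*$ extends to an isometry from $\mathcal{H}$ into $L^2([0,T])$. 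For a step function $\psi=\sum_i a_i I_{[0,t_i]}$, linearity and the kernel identity then give
\[
\int_0^T \psi_s\,dB^H_s = \sum_i a_i B^H_{t_i} = \sum_i a_i \int_0^T K_H(t_i,r)\,dW_r = \int_0^T (K_H^*\psi)(r)\,dW_r,
\]
so the two maps agree on the dense set of step functions. Since $\psi\mapsto\int_0^T\psi\,dB^H$ is an isometry into $L^2(\mathbb{P})$ by construction, and $\psi\mapsto\int_0^T (K_H^*\psi)\,dW$ is continuous (being the composition of the isometry $K_H^*$ with the Wiener isometry $L^2([0,T])\to L^2(\mathbb{P})$), two continuous linear maps agreeing on a dense subset coincide, which proves the claim for all $\psi\in\mathcal{H}$.

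I expect the main obstacle to be the kernel identity in the case $H<\tfrac12$: applying $D_{T^-}^\alpha$ to the truncated, singular function $u^{-\alpha}I_{[0,t]}(u)$ requires simultaneously handling the discontinuity at $u=t$ and the singularity at $u=0$, and reconciling the resulting boundary contribution with the first term $(t/s)^{-\alpha}(t-s)^{-\alpha}$ of $K_H(t,s)$ demands careful bookkeeping of the principal-value integral in the Weyl formula. Once this identity is in hand, the isometry and density arguments are routine.
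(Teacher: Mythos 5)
Your proposal is correct, but note that the paper itself offers no proof of this statement: it is imported verbatim from the cited reference, so there is no internal argument to compare against. What you have written is the standard ``transfer principle'' proof from the fractional Brownian motion literature, and it is sound: the kernel identity $(K_H^* I_{[0,t]})(s)=K_H(t,s)$, the isometry $\|K_H^*\psi\|_{L^2([0,T])}=\|\psi\|_{\mathcal H}$ on step functions via $R_H(t,s)=\int_0^T K_H(t,r)K_H(s,r)\,dr$, agreement of the two integrals on step functions, and extension by density and continuity of both isometries. Two remarks. First, the obstacle you flag for $H<\tfrac12$ largely evaporates if you compute $D_{T^-}^\alpha\bigl(u^{-\alpha}I_{[0,t]}(u)\bigr)(s)$ directly from the definition $D_{T^-}^\alpha=-\frac{d}{ds}\,I_{T^-}^{1-\alpha}$ rather than through the Weyl formula: since the integrand vanishes for $u>t$, one gets
\begin{equation*}
D_{T^-}^\alpha\bigl(u^{-\alpha}I_{[0,t]}(u)\bigr)(s)
=-\frac{1}{\Gamma(1-\alpha)}\frac{d}{ds}\int_s^t (u-s)^{-\alpha}u^{-\alpha}\,du
=\frac{1}{\Gamma(1-\alpha)}\Bigl(t^{-\alpha}(t-s)^{-\alpha}+\alpha\int_s^t (u-s)^{-\alpha}u^{-\alpha-1}\,du\Bigr),
\end{equation*}
which after multiplying by $b_H\Gamma(1-\alpha)s^\alpha$ is exactly the paper's expression for $K_H(t,s)$, with no jump bookkeeping and no integration by parts; your Weyl-formula route also works (the boundary contribution from the jump at $u=t$ combines with the principal term as you describe), but it additionally requires justifying that the Weyl representation, proved in the paper only for smooth $f$, applies to a discontinuous integrand. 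Second, for $H>\tfrac12$ the space $\mathcal H$ contains distributional elements, so strictly speaking $K_H^*\psi$ for general $\psi\in\mathcal H$ \emph{is} the isometric extension you construct; your phrasing already handles this correctly, but it is worth stating explicitly that the identity for general $\psi$ holds by this extension rather than by a pointwise formula.
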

In particular, for $\psi_s=\mathbf{1}_{[0,t]}(s)$, we recover
\begin{equation*}
  B^H_t=\int_0^t K_H(t,s)\,dW_s.
\end{equation*}

Analogous to the It\^o isometry, we have the following isometry for stochastic integrals with respect to fractional Brownian motion.
\begin{lemma}[\cite{Biagini2008}]\label{isometry}
If $f,g\in\mathcal{H}$, then
\begin{equation*}
  \mathbb{E}\!\left(\int_0^T f_s\,\mathrm{d}B^H_s \int_0^T g_s\,\mathrm{d}B^H_s\right)
  =\int_0^T (K_H^*f)(s)\,(K_H^*g)(s)\,\mathrm{d}s.
\end{equation*}
For $H > \tfrac{1}{2}$, we have a more intuitive equality:
\begin{equation}
    \mathbb{E}\!\left(\int_0^T f_s\,\mathrm{d}B^H_s \int_0^T g_s\,\mathrm{d}B^H_s\right)
    =H(2H-1)\int_0^T\int_0^T f_t g_s |s-t|^{2H-2}\,\mathrm{d}s\,\mathrm{d}t.
\end{equation}
\end{lemma}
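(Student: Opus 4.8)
The plan is to obtain the first identity directly from the Wiener representation of the fractional integral together with the classical It\^o isometry, and then to reduce the $H>\tfrac12$ formula to a computation on indicator functions followed by a density argument.

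First I would invoke the representation theorem stated above: for any $f,g\in\mathcal{H}$ we may write $\int_0^T f_s\,dB^H_s=\int_0^T (K_H^*f)(s)\,dW_s$ and $\int_0^T g_s\,dB^H_s=\int_0^T (K_H^*g)(s)\,dW_s$, where $K_H^*f$ and $K_H^*g$ are deterministic elements of $L^2([0,T])$. Since $W$ is a standard Brownian motion, the classical It\^o isometry for deterministic integrands yields
\begin{equation*}
  \mathbb{E}\!\left(\int_0^T (K_H^*f)(s)\,dW_s \int_0^T (K_H^*g)(s)\,dW_s\right)
  =\int_0^T (K_H^*f)(s)\,(K_H^*g)(s)\,ds,
\end{equation*}
which is exactly the first claimed equality. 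Equivalently, this expresses that the map $\mathcal{I}$ is an isometry from $\mathcal{H}$ onto a Gaussian subspace of $L^2(\mathbb{P})$, so the left-hand side equals $\langle f,g\rangle_{H}$, with $K_H^*$ realizing $\mathcal{H}$ isometrically inside $L^2([0,T])$.

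For the second identity, valid when $H>\tfrac12$, my strategy is to first prove it for indicators and then extend by bilinearity and density. Taking $f=\mathbf{1}_{[0,t]}$ and $g=\mathbf{1}_{[0,s]}$, the left-hand side equals $\langle \mathbf{1}_{[0,t]},\mathbf{1}_{[0,s]}\rangle_{H}=R_H(t,s)$ by the defining inner product on $\mathcal{H}$. It then remains to identify $R_H(t,s)$ with the double integral $H(2H-1)\int_0^t\!\int_0^s |u-v|^{2H-2}\,du\,dv$. Differentiating the explicit covariance $R_H(t,s)=\tfrac12(t^{2H}+s^{2H}-|t-s|^{2H})$ twice gives the mixed derivative $\partial_t\partial_s R_H(t,s)=H(2H-1)|t-s|^{2H-2}$, and integrating this back using the boundary values $R_H(0,s)=R_H(t,0)=0$ produces precisely the desired representation on indicators. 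Bilinearity extends the identity to all step functions, which are dense in $\mathcal{H}$.

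The main obstacle I anticipate is the final density step rather than the computation itself. For $H>\tfrac12$ the space $\mathcal{H}$ is strictly larger than a space of genuine functions, and the kernel $|s-t|^{2H-2}$ is singular on the diagonal, so the double integral need not converge absolutely for every $f,g\in\mathcal{H}$. To handle this cleanly I would restrict to a convenient dense subclass, for instance $L^{1/H}([0,T])$, on which $\int\!\int |f_t|\,|g_s|\,|s-t|^{2H-2}\,ds\,dt<\infty$ by the Hardy--Littlewood--Sobolev inequality; I would then verify that both sides of the identity are continuous with respect to the $\mathcal{H}$-norm on this subclass and pass to the limit from step functions using the isometry established in the first part. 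This keeps the statement meaningful precisely on those elements for which the right-hand side is finite.
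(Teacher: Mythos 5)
Your proof is correct. Note that the paper does not actually prove this lemma---it is stated as a quotation from Biagini et al.\ \cite{Biagini2008}---so there is no in-paper argument to compare against; what you give is essentially the standard proof from that literature: the first identity follows immediately from the representation $\int_0^T f_s\,dB^H_s=\int_0^T(K_H^*f)(s)\,dW_s$ (which the paper takes as the definition of the integral) together with the polarized It\^o isometry, and the second follows from the computation $\partial_t\partial_s R_H(t,s)=H(2H-1)|t-s|^{2H-2}$ on indicators, bilinearity, and density. Your caveat about the case $H>\tfrac12$ is exactly the right one to flag: $\mathcal{H}$ then contains genuine distributions, so the double-integral formula is only meaningful on a subclass such as $L^{1/H}([0,T])$, where the Hardy--Littlewood--Sobolev inequality gives absolute convergence and continuity of both sides, which is how the cited reference (and Pipiras--Taqqu) also handle it.
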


	In this paper, we assume that $\sigma$ is H\"older continuous of order $\gamma$ with $\gamma+H>1$. Under this condition, the stochastic integral
\[
\int_0^1 \sigma_u \, dB^H_u
\]
is well defined pathwise in the sense of Young integration(see Lemma~\ref{young}), and this definition agrees with the Wiener integral.
 
	\begin{lemma}[\cite{Young1936}]   \label{young}
		For $f\in C^\beta([0,1]), g\in C^\gamma([0,1])$, if $\beta+\gamma>1,$ 
		\begin{equation*}
			\int_0^t \sigma_s dg_s,\quad \int_0^t g_sd\sigma_s 
		\end{equation*}
		are well-defined. Furthermore, we have
		\begin{equation*}
			\int_0^t \sigma_s dg_s=\sigma_t g_t-\sigma_0 g_0-\int_0^t g_sd\sigma_s.
		\end{equation*}
	\end{lemma}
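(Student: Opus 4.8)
The plan is to construct the integral $\int_0^t f_s\,dg_s$ as a limit of Riemann--Stieltjes sums and to isolate the single place where the hypothesis $\beta+\gamma>1$ is used; integration by parts will then drop out by passing to the limit in an exact discrete identity. Fix $t\in[0,1]$, and for a partition $\pi:0=t_0<t_1<\cdots<t_n=t$ set $S(\pi)=\sum_{i=0}^{n-1}f_{t_i}(g_{t_{i+1}}-g_{t_i})$. The elementary observation driving everything is that deleting a single interior node $t_j$ changes the sum by exactly $(f_{t_j}-f_{t_{j-1}})(g_{t_{j+1}}-g_{t_j})$, which is controlled by $[f]_\beta\,[g]_\gamma\,|t_{j+1}-t_{j-1}|^{\beta+\gamma}$.

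First I would establish a Cauchy property. By a pigeonhole argument the double-gaps satisfy $\sum_{j=1}^{n-1}|t_{j+1}-t_{j-1}|\le 2t$, so some interior node has $|t_{j+1}-t_{j-1}|\le \tfrac{2t}{n-1}$; removing it and iterating down to the trivial partition gives
\begin{equation*}
|S(\pi)-f_0(g_t-g_0)|\le [f]_\beta\,[g]_\gamma\,(2t)^{\beta+\gamma}\sum_{m\ge 1}m^{-(\beta+\gamma)}.
\end{equation*}
The series converges precisely because $\beta+\gamma>1$, and this is the whole content of the hypothesis. Applying the same removal estimate to the common refinement of two partitions shows that $S(\pi)$ is Cauchy as the mesh $|\pi|\to0$; its limit defines $\int_0^t f_s\,dg_s$, and the bound above passes to the limit to yield the Young--Lo\`eve estimate $\bigl|\int_s^t f_u\,dg_u-f_s(g_t-g_s)\bigr|\lesssim [f]_\beta[g]_\gamma|t-s|^{\beta+\gamma}$. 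The integral $\int_0^t g_s\,df_s$ is defined symmetrically, and since the gap between left- and right-endpoint sums is bounded by $[f]_\beta[g]_\gamma\,|\pi|^{\beta+\gamma-1}\,t\to0$, the choice of evaluation point is immaterial.

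For the integration-by-parts formula I would start from the telescoping identity
\begin{equation*}
\sum_{i=0}^{n-1}f_{t_i}(g_{t_{i+1}}-g_{t_i})+\sum_{i=0}^{n-1}g_{t_{i+1}}(f_{t_{i+1}}-f_{t_i})=f_tg_t-f_0g_0,
\end{equation*}
which holds for every partition. Letting $|\pi|\to0$, the first sum tends to $\int_0^t f_s\,dg_s$ and the second to $\int_0^t g_s\,df_s$ by the preceding step, giving $\int_0^t f_s\,dg_s=f_tg_t-f_0g_0-\int_0^t g_s\,df_s$, which is the stated identity upon writing the integrand as $\sigma=f$. The hard part is concentrated entirely in the second paragraph: the point-removal estimate together with the convergence of $\sum m^{-(\beta+\gamma)}$, everything else being soft limiting arguments. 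A cleaner alternative route to the same conclusion is the sewing lemma: the germ $\Xi_{s,t}=f_s(g_t-g_s)$ has defect $\delta\Xi_{s,u,t}=(f_s-f_u)(g_t-g_u)=O(|t-s|^{\beta+\gamma})$ with exponent exceeding $1$, so the lemma produces a unique additive functional agreeing with $\Xi$ to that order, namely the integral.
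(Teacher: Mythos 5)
Your proposal is correct, and it actually proves more than the paper does for this statement: the paper states Lemma~\ref{young} as a cited result from Young's original work and gives no proof, while its only in-house argument of this type is the proof of the companion estimate Lemma~\ref{young2}, which proceeds by dyadic bisection (comparing $S_n$ and $S_{n+1}$ over $2^n$ versus $2^{n+1}$ equal subintervals and summing the resulting geometric series, convergent since $\beta+\gamma>1$). Your route is genuinely different: the point-removal identity $(f_{t_j}-f_{t_{j-1}})(g_{t_{j+1}}-g_{t_j})$ combined with the pigeonhole bound $\min_j|t_{j+1}-t_{j-1}|\le 2t/(n-1)$ controls \emph{arbitrary} partitions, not just the dyadic sequence, which is exactly what is needed for the well-definedness claim of the present lemma; comparing any two partitions through their common refinement then gives the Cauchy property in the mesh, and the exact telescoping identity yields the integration-by-parts formula in the limit — a claim the paper never proves for either lemma. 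The trade-off is the expected one: the dyadic argument is shorter and delivers the quantitative Young--Lo\`eve bound \eqref{youngint} with minimal bookkeeping, but it leaves partition-independence implicit, whereas your argument (equivalently, the sewing-lemma formulation you mention, whose defect computation $\delta\Xi_{s,u,t}=-(f_u-f_s)(g_t-g_u)$ is correct) gives the complete existence statement. One small point to tighten if this were written out in full: the sentence asserting the Cauchy property should make explicit that the removal estimate is applied \emph{locally} within each subinterval of the coarser partition, producing the bound $C\,[f]_\beta[g]_\gamma\,|\pi|^{\beta+\gamma-1}\,t$ for the discrepancy against the common refinement; as stated it is slightly terse, though the needed ingredient is already on the page.
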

	
	\begin{lemma}[\cite{Young1936}] \label{young2}
Let $f,g:[0,T]\to \mathbb{R}$ be functions such that $f \in C^{\beta}$ 
and $g \in C^{\gamma}$ with $\beta,\gamma \in (0,1)$ and $\beta+\gamma>1$. 
Then  for any $0\leq s<t\leq T$, one has
\begin{equation}
	\left|\int_s^t f_r \, dg_r - f_s (g_t-g_s)\right|
   \leq C_{\alpha,\beta}\, [f]_{\beta}\,[g]_{\gamma}\, |t-s|^{\beta+\gamma}.\label{youngint}
\end{equation}
\end{lemma}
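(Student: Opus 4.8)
The plan is to derive the estimate directly from the definition of the Young integral as a limit of Riemann--Stieltjes sums, using a point-removal (coarsening) argument combined with a pigeonhole estimate; the hypothesis $\beta+\gamma>1$ will enter precisely through the convergence of a $\zeta$-type series. Fix $0\le s<t\le T$ and a partition $\pi=\{s=t_0<t_1<\cdots<t_n=t\}$, and write the associated Riemann sum $S_\pi=\sum_{i=0}^{n-1} f_{t_i}(g_{t_{i+1}}-g_{t_i})$. The trivial partition $\{s,t\}$ produces exactly $f_s(g_t-g_s)$, so the task reduces to controlling how $S_\pi$ changes as interior nodes are deleted one at a time until only $\{s,t\}$ remains.

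First I would compute the effect of removing a single interior node $t_i$. Merging $[t_{i-1},t_i]$ and $[t_i,t_{i+1}]$ into $[t_{i-1},t_{i+1}]$ alters the sum by exactly $-(f_{t_i}-f_{t_{i-1}})(g_{t_{i+1}}-g_{t_i})$, whose modulus is bounded by $[f]_\beta[g]_\gamma\,(t_i-t_{i-1})^\beta(t_{i+1}-t_i)^\gamma\le [f]_\beta[g]_\gamma\,(t_{i+1}-t_{i-1})^{\beta+\gamma}$. Next I would invoke a pigeonhole argument: since $\sum_{i=1}^{n-1}(t_{i+1}-t_{i-1})\le 2(t-s)$ by telescoping, there always exists an interior node with $t_{i+1}-t_{i-1}\le 2(t-s)/(n-1)$, and I would always delete such a node. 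Iterating this from $n$ subintervals down to one subinterval, the total accumulated change is bounded by $[f]_\beta[g]_\gamma\sum_{m=2}^{n}\bigl(2(t-s)/(m-1)\bigr)^{\beta+\gamma}$.

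Here the hypothesis $\beta+\gamma>1$ is decisive: the series $\sum_{k\ge 1}k^{-(\beta+\gamma)}$ converges to $\zeta(\beta+\gamma)<\infty$, so summing over the deletions yields the partition-independent bound $|S_\pi - f_s(g_t-g_s)|\le 2^{\beta+\gamma}\zeta(\beta+\gamma)\,[f]_\beta[g]_\gamma\,|t-s|^{\beta+\gamma}$. Since $\int_s^t f_r\,dg_r$ is by definition (Lemma~\ref{young}) the limit of $S_\pi$ as the mesh of $\pi$ tends to zero, passing to the limit preserves this inequality and delivers \eqref{youngint}, with an admissible choice $C_{\beta,\gamma}=2^{\beta+\gamma}\zeta(\beta+\gamma)$ depending only on the H\"older exponents.

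The point requiring the most care will be justifying that the Riemann sums genuinely converge, so that the limit defining the integral exists and the bound transfers to it. The uniform control on $|S_\pi-f_s(g_t-g_s)|$ does not by itself furnish a Cauchy net; one must additionally compare a partition with its common refinements using the same per-node estimate. More cleanly, I would recast the entire argument through Young's sewing lemma applied to the germ $\Xi_{u,v}=f_u(g_v-g_u)$, whose defect $\delta\Xi_{u,w,v}=\Xi_{u,v}-\Xi_{u,w}-\Xi_{w,v}=-(f_w-f_u)(g_v-g_w)$ satisfies $|\delta\Xi_{u,w,v}|\le [f]_\beta[g]_\gamma\,|v-u|^{\beta+\gamma}$ with exponent strictly greater than $1$; the sewing lemma then yields both the existence of the integral and the estimate in a single stroke.
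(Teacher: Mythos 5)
Your proposal is correct, but it follows a genuinely different route from the paper. The paper restricts to $s=0$, $t=T$, takes the \emph{dyadic} Riemann sums $S_n=\sum_{i} f_{iT/2^n}\bigl(g_{(i+1)T/2^n}-g_{iT/2^n}\bigr)$, bounds the increment between consecutive refinement levels by $|S_{n+1}-S_n|\le [f]_\beta[g]_\gamma\,T^{\beta+\gamma}\,2^{-(\beta+\gamma)}2^{-n(\beta+\gamma-1)}$, and sums the resulting \emph{geometric} series, which converges precisely because $\beta+\gamma>1$; the integral is then identified with the limit of the dyadic sums. You instead start from an \emph{arbitrary} partition and coarsen it by deleting one interior node at a time, using the pigeonhole bound $t_{i+1}-t_{i-1}\le 2(t-s)/(n-1)$ and summing the \emph{zeta-type} series $\sum_k k^{-(\beta+\gamma)}$, which is Young's original argument. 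Your route buys more: it yields the maximal inequality $|S_\pi-f_s(g_t-g_s)|\le 2^{\beta+\gamma}\zeta(\beta+\gamma)[f]_\beta[g]_\gamma|t-s|^{\beta+\gamma}$ \emph{uniformly over all partitions}, not just dyadic ones, and your closing observation is apt — the uniform bound alone is not a Cauchy criterion, and recasting via the sewing lemma applied to the germ $\Xi_{u,v}=f_u(g_v-g_u)$ gives existence and the estimate simultaneously. The paper's proof is shorter but has the mirror-image gap: it tacitly assumes the integral is the limit of the dyadic sums (and it bounds $|\int_0^T f\,dg|$ against $|S_0|$ rather than cleanly isolating $|\int_0^T f\,dg - f_0(g_T-g_0)|$); both arguments ultimately lean on the existence statement of Lemma~\ref{young}, after which either series estimate transfers to the limit. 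Your constant $2^{\beta+\gamma}\zeta(\beta+\gamma)$ and the paper's $2^{-(\beta+\gamma)}\bigl(1-2^{1-\beta-\gamma}\bigr)^{-1}$ both blow up as $\beta+\gamma\downarrow 1$, as they must.
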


\begin{proof}
	For simplicity, it suffices to prove the reduced case $s=0, t=T$. 
We first divide the interval $[0,T]$ into $2^n$ equal parts and set  
\[
   S_n=\sum_{i=0}^{2^n-1} f_{\tfrac{iT}{2^n}}\left(g_{\tfrac{(i+1)T}{2^n}}-g_{\tfrac{iT}{2^n}}\right).
\]
Then,\begin{align*}
   |S_{n+1}-S_n|
   &=\left|\sum_{i=0}^{2^n-1} f_{\tfrac{2iT}{2^{n+1}}}\left(g_{\tfrac{(2i+1)T}{2^{n+1}}}-g_{\tfrac{2iT}{2^{n+1}}}\right)
     + f_{\tfrac{(2i+1)T}{2^{n+1}}}\left(g_{\tfrac{(2i+2)T}{2^{n+1}}}-g_{\tfrac{(2i+1)T}{2^{n+1}}}\right) \right.\\
   &\quad -\left. f_{\tfrac{iT}{2^n}}\left(g_{\tfrac{(i+1)T}{2^n}}-g_{\tfrac{iT}{2^n}}\right)\right| \\
   &=\left|\sum_{i=0}^{2^n-1} \left(f_{\tfrac{(2i+1)T}{2^{n+1}}}-f_{\tfrac{iT}{2^n}}\right)\left(g_{\tfrac{(2i+2)T}{2^{n+1}}}-g_{\tfrac{(2i+1)T}{2^{n+1}}}\right)\right| \\
   &\leq \sum_{i=0}^{2^n-1}[f]_\beta [g]_\gamma \frac{T^{\beta+\gamma}}{2^{(n+1)(\beta+\gamma)}} \\
   &= \frac{[f]_\beta [g]_\gamma}{2^{\beta+\gamma}}\cdot \frac{T^{\beta+\gamma}}{2^{n(\beta+\gamma-1)}}.
\end{align*}

Hence
\begin{align*}
     \left|\int_0^T f_s\,dg_s\right|
  & \leq |S_0| + \frac{[f]_\beta [g]_\alpha}{2^{\beta+\gamma}}\sum_{n=0}^\infty \frac{T^{\beta+\gamma}}{2^{n(\beta+\gamma-1)}}\\
   &\leq |f_0(g_1-g_0)| + \frac{[f]_\beta [g]_\gamma}{2^{\beta+\gamma}}\cdot \frac{T^{\beta+\gamma}}{1-\tfrac{1}{2^{\beta+\gamma-1}}}.
\end{align*}

\end{proof}
    \begin{remark}
        From the above theorem or the Kolmogorov-Chentsov theorem, it also follows that the paths of the integral of fractional Brownian motion are almost surely H\"older continuous with exponent $H - \varepsilon$.
    \end{remark}

 \begin{assumption}[A]\label{ass:A}
We impose the following assumptions on the coefficients of \eqref{fir}.

\textbf{(1) General conditions.}
The coefficients \(b\) and \(\sigma\) satisfy:
\begin{enumerate}
    \item $\sigma \in C^1([0,1])$ and $\inf_{0 \leq s \leq 1} |\sigma_s| > 0$. 
    Without loss of generality, we may assume that $0 < m \leq \sigma \leq M$;
    \item $b$ is continuous in $(t,x)$, twice continuously differentiable with respect to $x$, and bounded.
\end{enumerate}

\textbf{(2) Additional conditions depending on the Hurst parameter.}
\begin{itemize}
    \item[\textbf{(A1)}] \textbf{Case \(1/4 < H < 1/2\):}  
    No further restriction is imposed on \(b\) or \(\sigma\).

    \item[\textbf{(A2)}] \textbf{Case \(1/2 < H < 1\):}  
    In addition to the general conditions, assume that:
    \begin{enumerate}
        \item $b$ satisfies the Lipschitz condition
        \[
           |b_s(x) - b_t(y)| \leq L \big( |t-s| + |x-y| \big),\quad t,s\in[0,1];
        \]
        \item The following inequality holds:
        \[
           1 < \frac{m^2 (2\beta+1)\Gamma(1+\beta)^2}
                    {M^2 \alpha^2 L^2 \Gamma(\beta-\alpha)^2}.
        \]
    \end{enumerate}

    \item[\textbf{(A3)}] \textbf{Case \(H = 1/2\):}  
    In addition to the general conditions, assume that:
    \begin{enumerate}
        \item $b$ possesses continuous partial derivatives with respect to $x$ up to order $n \geq 2$. 
        Moreover, for each $1 \leq k \leq n-1$, the derivative $(\partial_x)^k b_t(x)$ 
        is continuously differentiable in $t$.
    \end{enumerate}
\end{itemize}
\end{assumption}

\begin{remark}
The boundedness of $b$ is required for the application of Girsanov’s theorem. 
In the computation of the Onsager--Machlup functional, $b$ and its derivatives 
will automatically remain bounded under the condition 
\[
\Big\| \int_0^\cdot \sigma_s \, dB^H_s \Big\| \leq \varepsilon.
\]
\end{remark}

\begin{remark}
Condition~(2) in (A2) is imposed to ensure the validity of Girsanov’s theorem. 
If Condition~(2) fails while all other conditions in (A2) remain in force, 
the Onsager--Machlup functional can still be derived on the interval $[0,T]$, 
provided that $T$ satisfies
\[
    T^{2H+1} 
    < 
    \frac{m^2 (2\beta+1)\Gamma(1+\beta)^2}
         {M^2 \alpha^2 L^2 \Gamma(\beta-\alpha)^2},
\]
where $\beta$ denotes the exponent corresponding to the H\"older norm of the noise, 
and $\alpha = |H - \tfrac{1}{2}|$.
\end{remark}

\begin{remark}
    The existence and uniqueness of the solution to \eqref{fir} can be obtained by combining Girsanov's theorem with pathwise uniqueness, according to the Yamada--Watanabe theorem.
\end{remark}

    \subsection{Approximate limits in Wiener space}
    For the Onsager-Machlup functional of fractional Brownian motion, 
the measurable norm and trace theory serve as powerful tools for studying 
the asymptotic behavior of exponential conditional expectations. 
The following content is adapted from $\cite{Nualart}$.

   Let $H^1$ be the Cameron-Martin space:
   \begin{equation*}
   	H^1:=\{h\in \Omega,\text{ $h$ is absolutely continuous and $h'$}\in L^2([0,1])\}.
   \end{equation*}

    Let $Q : H^1 \to H^1$ be an orthogonal projection such that 
$\dim(QH^1) < \infty$. Then $Q$ can be represented as
\[
Qh = \sum_{i=1}^n \langle h, h_i \rangle h_i,
\]
where $(h_1, \ldots, h_n)$ is an orthonormal sequence in $QH^1$. 
We may also define the $H^1$-valued random variable
\[
QW = \sum_{i=1}^n \left( \int_0^1 h_i(s)\, dW_s \right) h_i.
\]
Note that $QW$ does not depend on the particular choice of 
$(h_1, \ldots, h_n)$.

A sequence of orthogonal projections $\{Q_n\}$ on $H^1$ is called an 
\emph{approximating sequence of projections} if $\dim(Q_n H_1) < \infty$ 
and $Q_n$ converges strongly and monotonically to the identity operator on $H^1$.

\begin{definition}
A semi-norm $N$ on $H^1$ is called a \emph{measurable semi-norm} if there 
exists a random variable $\tilde{N} < \infty$ almost surely such that 
for every approximating sequence of projections $\{Q_n\}$ on $H^1$, 
the sequence $N(Q_n W)$ converges in probability to $\tilde{N}$, and
\[
\mathbb{P}(\tilde{N} \leq \varepsilon) > 0 
\quad \text{for all } \varepsilon > 0.
\]
If, in addition, $N$ is a norm on $H^1$, then it is called a 
\emph{measurable norm}.
\end{definition}

We will make use of the following result on measurable semi-norms.

\begin{lemma} \label{measurablenorm}
Let $\{N_n\}$ be a nondecreasing sequence of measurable semi-norms. 
Suppose that
\[
\tilde{N} := \mathbb{P}\text{-}\lim_{n \to \infty} N_n(W)
\]
exists and satisfies $\mathbb{P}(\tilde{N} \leq \varepsilon) > 0$ 
for all $\varepsilon > 0$. Then, if the limit $N(h) = \lim_{n \to \infty} N_n(h)$
exists for all $h \in H^1$, it defines a measurable 
semi-norm.
\end{lemma}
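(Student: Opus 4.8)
The plan is to verify directly the two requirements in the definition of a measurable semi-norm for the limit $N$: first that $N$ is a genuine semi-norm on $H^1$, and second that there is a single random variable, namely the given $\tilde N$, to which $N(Q_mW)$ converges in probability along every approximating sequence $\{Q_m\}$. The first point is immediate: a pointwise limit of semi-norms is subadditive and absolutely homogeneous, and by hypothesis $N(h)=\lim_n N_n(h)$ is finite for every $h\in H^1$, so $N$ is a semi-norm; the positivity condition $\mathbb{P}(\tilde N\le\varepsilon)>0$ for all $\varepsilon>0$ is assumed. Thus everything reduces to the convergence $N(Q_mW)\to\tilde N$ in probability.

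Fix an approximating sequence $\{Q_m\}$. Since each $N_n$ is measurable, the limits $\tilde N_n:=\mathbb{P}\text{-}\lim_{m}N_n(Q_mW)$ exist and are intrinsic (independent of $\{Q_m\}$), and by definition $\tilde N_n=N_n(W)$. Because $Q_mW\in H^1$ almost surely and $N_k\le N_{k+1}$ pointwise, for each fixed $m$ the quantities $N_n(Q_mW)$ increase in $n$ to $N(Q_mW)$; taking $\mathbb{P}$-limits in $m$ preserves the inequality $N_n(Q_mW)\le N_{n+1}(Q_mW)$, so $\tilde N_n$ is nondecreasing. Being monotone and convergent in probability to $\tilde N$ (the standing hypothesis), the sequence $\tilde N_n$ in fact increases to $\tilde N$ almost surely, whence $\tilde N=\sup_n\tilde N_n$ and $N(Q_mW)=\sup_n N_n(Q_mW)$.

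The lower bound is then straightforward. For every fixed $n$, $N(Q_mW)\ge N_n(Q_mW)\xrightarrow{m\to\infty}\tilde N_n$ in probability, so for any $\delta>0$ one has $\mathbb{P}\big(N(Q_mW)\le\tilde N-\delta\big)\le\mathbb{P}\big(N_n(Q_mW)\le\tilde N_n-\tfrac{\delta}{2}\big)+\mathbb{P}\big(\tilde N_n<\tilde N-\tfrac{\delta}{2}\big)$; the first term tends to $0$ as $m\to\infty$, and the second tends to $0$ as $n\to\infty$ since $\tilde N_n\uparrow\tilde N$, giving $\lim_m\mathbb{P}\big(N(Q_mW)\le\tilde N-\delta\big)=0$.

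The remaining, and genuinely delicate, step is the matching upper bound $\lim_m\mathbb{P}\big(N(Q_mW)\ge\tilde N+\delta\big)=0$, which amounts to interchanging $\sup_n$ and $\lim_m$ in $\lim_m\sup_n N_n(Q_mW)=\sup_n\lim_m N_n(Q_mW)$. This is the main obstacle: the convergence $N_n(Q_mW)\to\tilde N_n$ is known only for each fixed $n$, whereas $N(Q_mW)$ is the supremum over all $n$, including $n$ large compared with $m$, so the difference $N(Q_mW)-N_n(Q_mW)$ must be shown negligible uniformly in $m$ for large $n$. I would establish this tail control by exploiting that the increments $(Q_m-Q_{m-1})W$ are independent centred Gaussian elements of $H^1$, so that $N(Q_mW)$ is the $N$-value of the partial sums of an independent Gaussian series; the almost sure finiteness of $\tilde N=\sup_n\tilde N_n$ together with Gaussian concentration (Fernique's theorem) then yields uniform integrability of $\{N(Q_mW)\}_m$, and an It\^o--Nisio type argument upgrades convergence in probability to almost sure convergence of the partial sums. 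Combining this tail control with the lower bound and a monotone-convergence comparison of $\mathbb{E}[\min(N(Q_mW),1)]$ with $\mathbb{E}[\min(\tilde N,1)]$ forces the limit to coincide with $\tilde N$, completing the verification that $N$ is a measurable semi-norm.
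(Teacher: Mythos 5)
The paper never proves this lemma at all --- it is stated as background material ``adapted from'' Moret--Nualart and used as a black box --- so your attempt has to be judged on its own merits rather than against an in-paper argument. The parts you carry out in full are correct: $N$ is a semi-norm; the associated variables $\tilde N_n=N_n(W)$ are a.s.\ nondecreasing in $n$ and, being monotone and convergent in probability, converge a.s.\ to $\tilde N$; $N(Q_mW)=\sup_n N_n(Q_mW)$ a.s.; and your union bound proving the lower estimate $\lim_m\mathbb{P}\bigl(N(Q_mW)\le\tilde N-\delta\bigr)=0$ is valid. You also correctly isolate the crux: the reverse estimate $\lim_m\mathbb{P}\bigl(N(Q_mW)\ge\tilde N+\delta\bigr)=0$, i.e.\ the interchange of $\sup_n$ and $\lim_m$.

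That last step, however, is where the proposal has a genuine gap: the tools you invoke do not assemble into a proof as described. Fernique's theorem gives exponential moments of $N(Q_mW)$ with constants governed by a level $a_m$ satisfying $\mathbb{P}\bigl(N(Q_mW)\le a_m\bigr)\ge 3/4$; to get uniform integrability of $\{N(Q_mW)\}_m$ you need $\sup_m a_m<\infty$, i.e.\ a tail bound on $N(Q_mW)$ \emph{uniform in $m$} --- which is precisely what is to be proved (a.s.\ finiteness of $\tilde N$ controls the limits $\tilde N_n$, not the pre-limit variables). The appeal to It\^o--Nisio is also not available: the partial sums $Q_mW$ have no limit in any normed space on which $N$ acts, since $W\notin H^1$, and building such a completion is exactly Gross's construction, which presupposes measurability of the semi-norm --- so this route is circular. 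The concrete missing ingredient is Anderson's inequality. Because $\{Q_m\}$ is monotone, for $m'\ge m$ one has $Q_{m'}W=Q_mW+(Q_{m'}-Q_m)W$ with independent centred Gaussian summands, and $\{h\in H^1:\ N_n(h)\le t\}$ is symmetric and convex; hence $\mathbb{P}\bigl(N_n(Q_{m'}W)\le t\bigr)\le\mathbb{P}\bigl(N_n(Q_mW)\le t\bigr)$. Letting $m'\to\infty$ (convergence in probability implies convergence in law; pass through continuity points and use right-continuity of survival functions) gives $\mathbb{P}\bigl(N_n(Q_mW)>t\bigr)\le\mathbb{P}\bigl(\tilde N_n\ge t\bigr)\le\mathbb{P}\bigl(\tilde N\ge t\bigr)$, and letting $n\to\infty$ yields the uniform stochastic domination $\mathbb{P}\bigl(N(Q_mW)>t\bigr)\le\mathbb{P}\bigl(\tilde N\ge t\bigr)$ for all $m,t$. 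This is exactly the uniform-in-$m$ tail control your sketch lacks, and with it your own lower bound finishes the proof: set $Y_m=\arctan N(Q_mW)$ and $Y=\arctan\tilde N$; domination gives $\mathbb{E}[Y_m]\le\mathbb{E}[Y]$, the lower bound gives $\mathbb{E}\bigl[(Y_m-Y)^-\bigr]\to0$, hence $\limsup_m\mathbb{E}\bigl[(Y_m-Y)^+\bigr]=\limsup_m\bigl(\mathbb{E}[Y_m-Y]+\mathbb{E}\bigl[(Y_m-Y)^-\bigr]\bigr)\le0$, so $Y_m\to Y$ in $L^1$ and therefore $N(Q_mW)\to\tilde N$ in probability.
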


	\begin{theorem}\label{estimate1}
Let \( N \) be a measurable norm on \( H^{1} \). Then, 
\[
\lim _{\varepsilon \to 0} \mathbb{E}\left( \exp \left( \int_{0}^{1} h(s) \, d W_{s} \right) \mid \tilde{N} < \varepsilon \right) = 1,
\]
for all \( h \in L^{2}([0,1]) \).
\end{theorem}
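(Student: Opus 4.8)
The plan is to combine the Cameron--Martin translation formula with a sharp small-ball asymptotic for the shifted ball, so that the conditioning on $\{\tilde N<\varepsilon\}$ is turned into a ratio of small-ball probabilities. Write $g(t):=\int_0^t h(s)\,ds$, so that $g\in H^1$, $g'=h$, and $\|g\|_{H^1}^2=\int_0^1 h(s)^2\,ds=\|h\|_{L^2}^2$; the Wiener integral $\int_0^1 h\,dW$ is the Paley--Wiener functional associated with the direction $g$. Applying the Cameron--Martin theorem to $F(W)=\mathbf 1_{\{\tilde N(W)<\varepsilon\}}$, the shift $W\mapsto W+g$ multiplies the law of $W$ by $\exp\!\big(\int_0^1 h\,dW-\tfrac12\|h\|_{L^2}^2\big)$, which gives the identity
\[
\mathbb{E}\Big[\exp\Big(\int_0^1 h\,dW\Big)\,\mathbf 1_{\{\tilde N<\varepsilon\}}\Big]
=\exp\Big(\tfrac12\|h\|_{L^2}^2\Big)\,\mathbb{P}\big(\tilde N(W+g)<\varepsilon\big).
\]
Since $N$ is a measurable semi-norm it is dominated by the $H^1$-norm, hence continuous on $H^1$, so $N(Q_ng)\to N(g)$ and the triangle inequality yields $|\tilde N(W+g)-\tilde N(W)|\le N(g)$ almost surely; this makes $\tilde N(W+g)$ well defined and identifies it with the extended norm of $W+g$. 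Dividing by $\mathbb{P}(\tilde N<\varepsilon)>0$ and using the symmetry $W\overset{d}{=}-W$, under which the (convex, symmetric) ball is invariant, I rewrite the conditional expectation as
\[
\mathbb{E}\Big[\exp\Big(\int_0^1 h\,dW\Big)\,\Big|\,\tilde N<\varepsilon\Big]
=\exp\Big(\tfrac12\|h\|_{L^2}^2\Big)\,
\frac{\mathbb{P}\big(\|W-g\|_N<\varepsilon\big)}{\mathbb{P}\big(\|W\|_N<\varepsilon\big)}.
\]

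The reduction is thus to the shifted small-ball asymptotic
\[
\lim_{\varepsilon\to0}\frac{\mathbb{P}\big(\|W-g\|_N<\varepsilon\big)}{\mathbb{P}\big(\|W\|_N<\varepsilon\big)}
=\exp\Big(-\tfrac12\|g\|_{H^1}^2\Big),
\]
after which the prefactor $\exp(\tfrac12\|h\|_{L^2}^2)=\exp(\tfrac12\|g\|_{H^1}^2)$ cancels the shift cost exactly and the limit equals $1$. This shifted small-ball estimate is the main obstacle. The two easy halves only bracket the answer: the symmetry $W\mapsto-W$ makes the conditional law of $\int_0^1 h\,dW$ even, so $\mathbb{E}[\exp(\int_0^1 h\,dW)\mid\tilde N<\varepsilon]=\mathbb{E}[\cosh(\int_0^1 h\,dW)\mid\tilde N<\varepsilon]\ge1$, while Anderson's inequality gives $\mathbb{P}(\|W-g\|_N<\varepsilon)\le\mathbb{P}(\|W\|_N<\varepsilon)$ and hence the crude ceiling $\mathbb{E}[\exp(\int_0^1 h\,dW)\mid\tilde N<\varepsilon]\le\exp(\tfrac12\|h\|_{L^2}^2)$. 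Upgrading Anderson's inequality (ratio $\le1$) to the exact exponential rate $\exp(-\tfrac12\|g\|_{H^1}^2)$ is precisely the quantitative content I would draw from the small-ball machinery for the sup and Hölder norms developed elsewhere in the paper; the statement that translating the center of a small ball by a Cameron--Martin element costs the factor $\exp(-\tfrac12\|g\|_{H^1}^2)$ to leading order holds for any norm for which the centered small-ball probability is controlled, which covers the admissible norms here.

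As a fallback that sidesteps the general translation theorem for the concrete norms of interest, I would approximate $h$ in $L^2$ by $h_\delta\in C^1$: integration by parts (Lemma~\ref{young}) expresses $\int_0^1 h_\delta\,dW$ through boundary values and averages of $W$, which on $\{\tilde N<\varepsilon\}$ are controlled by $\tilde N$ whenever $N$ dominates the supremum norm, so that $\exp(\int_0^1 h_\delta\,dW)\to1$ boundedly on the event; the residual Gaussian term $\int_0^1(h-h_\delta)\,dW$ has variance at most $\|h-h_\delta\|_{L^2}^2$, and one would estimate its conditional exponential moment uniformly in $\varepsilon$ before sending $\delta\to0$. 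The delicate point of this route is the uniform-in-$\varepsilon$ control of the residual moment, since that term is not independent of $\tilde N$ under the conditioning; this again rests on the quantitative small-ball estimates, confirming that the shifted small-ball asymptotic is the genuine crux of the argument.
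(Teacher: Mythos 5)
Your Cameron--Martin reduction is correct as far as it goes, and so is the bracketing $1\le \mathbb{E}\big[e^{\int_0^1 h\,dW}\mid \tilde N<\varepsilon\big]\le e^{\frac12\|h\|_{L^2}^2}$ obtained from symmetry and Anderson's inequality. But the reduction is a reformulation, not progress: your own identity
$\mathbb{E}\big[e^{\int_0^1 h\,dW}\mid \tilde N<\varepsilon\big]=e^{\frac12\|h\|_{L^2}^2}\,\mathbb{P}\big(\tilde N(W+g)<\varepsilon\big)/\mathbb{P}\big(\tilde N<\varepsilon\big)$
shows that the shifted small-ball asymptotic you then invoke is \emph{exactly equivalent} to the statement being proved. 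The genuine gap is the claim that this asymptotic ``holds for any norm for which the centered small-ball probability is controlled'' and can be drawn from the paper's small-ball machinery. It cannot: Lemma~\ref{smp} and the bound \eqref{smbp} are logarithmic-scale \emph{lower} bounds of the form $\mathbb{P}(\text{ball})\ge\exp\big(-M_\beta\varepsilon^{-1/(H-\beta)}\big)$ with an unidentified constant $M_\beta$; such estimates are blind to bounded multiplicative factors, so they cannot distinguish $\mathbb{P}\big(\tilde N(W-g)<\varepsilon\big)$ from $\mathbb{P}\big(\tilde N<\varepsilon\big)$ (both have the same logarithmic order), let alone produce the sharp constant $e^{-\frac12\|g\|_{H^1}^2}$ for their ratio, which by your bracketing lies somewhere in $[e^{-\frac12\|g\|_{H^1}^2},1]$. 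In the literature the implication runs the other way: the shifted small-ball limit (the ``ideal'' Onsager--Machlup functional) is \emph{deduced} from the present theorem via Cameron--Martin, not used to prove it. As written, your main route is circular.

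For the record, this paper does not prove Theorem~\ref{estimate1} at all; it imports it from the cited work of Moret and Nualart, where it rests on Shepp--Zeitouni's theorem on conditional exponential moments for measurable (semi-)norms. That proof obtains the hard half (the upper bound) directly from the Gross-measurability structure of the norm --- approximating finite-dimensional projections $Q_n$ with $N(Q_nW)\to\tilde N$, a splitting of $\int_0^1 h\,dW$ into a cylindrical part plus an independent remainder, and correlation-type inequalities --- with no small-ball rates anywhere. Your fallback route is actually the closer one to a repairable argument, but its missing step is not a small-ball estimate either: writing $\int_0^1 h\,dW=X_\delta+Y$ with $X_\delta=\int_0^1 h_\delta\,dW$ deterministically of size $O(C_\delta\varepsilon)$ on the ball and $Y$ centered Gaussian with variance $\sigma_\delta^2=\|h-h_\delta\|_{L^2}^2$, what you need is the correlation inequality $\mathbb{P}\big(|Y|>t\mid \tilde N<\varepsilon\big)\le\mathbb{P}\big(|Y|>t\big)$, i.e.\ that conditioning on a symmetric convex ball makes $|Y|$ stochastically smaller; this yields $\mathbb{E}\big[e^{q|Y|}\mid\tilde N<\varepsilon\big]\le\mathbb{E}\big[e^{q|Y|}\big]\to 1$ as $\sigma_\delta\to0$ uniformly in $\varepsilon$, and H\"older's inequality then closes the argument. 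For the norms used in this paper the balls are countable intersections of symmetric slabs, so the required inequality is the strong (``one slab against the rest'') form of the \v{S}id\'ak-type inequality \eqref{Sidak}; for a general measurable norm it is Harg\'e's one-slab case of the Gaussian correlation inequality. Without this ingredient, the crux of your proposal remains unproved.
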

    We recall that an operator $K : L^2([0,1]) \to L^2([0,1])$ is \emph{nuclear} iff 
\[
\sum_{n=1}^\infty \big| \langle K e_n, g_n \rangle \big| < \infty,
\]
for all orthonormal sequences $B=(e_n)$, $B'=(g_n)$ in $L^2([0,1])$.  
 The trace of a nuclear operator $K$ is defined by
\[
\operatorname{Tr} K = \sum_{n=1}^\infty \langle K e_n, e_n \rangle,
\]
for any orthonormal sequence $B=(e_n)$ in $L^2([0,1])$.  
The definition is independent of the sequence we have chosen.

Given a symmetric function $f \in L^2([0,1]^2)$, the Hilbert--Schmidt operator 
$K(f) : L^2 \to L^2$ associated with $f$ is defined by
\begin{equation}\label{eq:Kf}
  (K(f)h)(t) = \int_0^t f(t,u) h(u)\,du.
\end{equation}
The operator $K(f)$ is nuclear iff
\[
\sum_{n=1}^\infty \big|\langle K e_n, e_n \rangle \big| < \infty
\quad \text{for all orthonormal sequences } (e_n) \subset L^2([0,1]).
\]

If $f$ is continuous and $K(f)$ is nuclear, we can compute its trace as follows:
\[
\operatorname{Tr} f := \operatorname{Tr} K(f) = \int_0^1 f(s,s)\,ds.
\]

	\begin{theorem}\label{estimate2}
Let \( f \) be a symmetric function in \( L^{2}([0,1]^{2}) \) and let \( N \) be a measurable norm. If \( K(f) \) is nuclear, then 
\[
\lim _{\varepsilon \to 0} E\left( \exp \left( \int_{0}^{1} \int_{0}^{1} f(s, t) \, d W_{s} d W_{t} \right) \mid \tilde{N} < \varepsilon \right) = e^{-\text{Tr}(f)}.
\]
\end{theorem}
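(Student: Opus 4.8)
The plan is to reduce the statement to the first-chaos result of Theorem~\ref{estimate1} by diagonalizing the kernel and then linearizing the quadratic exponent with an auxiliary Gaussian sequence.

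First I would diagonalize. Since $f$ is symmetric and square-integrable, the associated self-adjoint Hilbert--Schmidt operator admits a spectral decomposition $f(s,t)=\sum_k \lambda_k e_k(s)e_k(t)$, where $(e_k)$ is an orthonormal basis of $L^2([0,1])$ and $(\lambda_k)$ are the eigenvalues; nuclearity of $K(f)$ gives $\sum_k|\lambda_k|<\infty$ together with $\operatorname{Tr}(f)=\sum_k\lambda_k=\int_0^1 f(s,s)\,ds$. Writing $\xi_k=\int_0^1 e_k\,dW_s$, which are i.i.d.\ $N(0,1)$, and interpreting the double integral as the double Wiener--It\^o integral, one has $\int_0^1\int_0^1 f\,dW\,dW=\sum_k\lambda_k(\xi_k^2-1)$, so that
\[
\exp\Big(\int_0^1\int_0^1 f\,dW\,dW\Big)=e^{-\operatorname{Tr}(f)}\exp\Big(\sum_k\lambda_k\xi_k^2\Big).
\]
As the factor $e^{-\operatorname{Tr}(f)}$ is deterministic, the theorem reduces to showing $\lim_{\varepsilon\to0}E\big(\exp(\sum_k\lambda_k\xi_k^2)\mid\tilde N<\varepsilon\big)=1$; thus the trace in the conclusion comes entirely from the Wick normalization, while the surviving quadratic exponential must wash out to $1$ under the small-ball conditioning.

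Next I would exploit the conditioning. Applying Theorem~\ref{estimate1} to $h=t\,e_k$ for every real $t$ forces the conditional moment generating function of $\xi_k$ to tend to $1$, hence $\xi_k\to0$ in conditional probability as $\varepsilon\to0$, and likewise jointly for any finite block $(\xi_1,\dots,\xi_m)$. To upgrade this to convergence of exponential moments I would linearize via an independent standard Gaussian sequence $(\eta_k)$: for $\lambda_k\ge0$, $\exp(\lambda_k\xi_k^2)=E_{\eta_k}\exp(\sqrt{2\lambda_k}\,\eta_k\xi_k)$, so that after truncating to the first $m$ modes and using Fubini,
\[
E\Big(\exp\Big(\sum_{k\le m}\lambda_k\xi_k^2\Big)\,\Big|\,\tilde N<\varepsilon\Big)
=E_\eta\Big[E\Big(\exp\Big(\int_0^1 g_\eta\,dW\Big)\,\Big|\,\tilde N<\varepsilon\Big)\Big],\qquad g_\eta=\sum_{k\le m}\sqrt{2\lambda_k}\,\eta_k e_k.
\]
For each fixed $\eta$ the inner conditional expectation tends to $1$ by Theorem~\ref{estimate1}, and a dominated-convergence argument over $\eta$ then returns the value $1$ for the truncated sum; the negative eigenvalues contribute factors in $(0,1]$ that converge to $1$ through $\xi_k\to0$ and create no integrability difficulty.

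Finally I would remove the truncation by letting $m\to\infty$, controlling the tail through $\sum_{k>m}|\lambda_k|\to0$ and the monotone approximation of the measurable norm provided by Lemma~\ref{measurablenorm}. The hard part will be precisely this interchange of the limits $\varepsilon\to0$ and $m\to\infty$: a uniform-in-$\varepsilon$ domination for the conditional expectations is delicate because unconditional exponential moments of $\xi_k^2$ already diverge once $\lambda_k\ge\tfrac12$. Since $\lambda_k\to0$, only finitely many eigenvalues exceed any fixed threshold, so I would isolate those finitely many large positive modes and treat them by the direct conditional argument, handle the remaining small positive modes by the Gaussian-linearization bound above, and dominate the negative modes by $1$; assembling these three pieces via H\"older's inequality should yield the uniform control needed to pass to the limit and close the proof.
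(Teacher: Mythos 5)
The paper itself does not prove Theorem~\ref{estimate2}; it is imported verbatim from Moret--Nualart \cite{Nualart}, so your attempt must be measured against the classical argument. Your skeleton is the right one and the reduction is correct: diagonalizing $f=\sum_k\lambda_k\,e_k\otimes e_k$ with $\sum_k|\lambda_k|<\infty$, identifying the double Wiener--It\^o integral with $\sum_k\lambda_k(\xi_k^2-1)$ so that the factor $e^{-\operatorname{Tr}(f)}$ comes out of the Wick normalization, and extracting the conditional convergence $\xi_k\to0$ from Theorem~\ref{estimate1} are all sound. The gap is that the three steps you defer are exactly where the theorem lives, and none of them closes with the tools you invoke. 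Your dominated convergence over $\eta$ needs a dominating function \emph{uniform in} $\varepsilon$; the only bound you have is $\mathbb{E}[\exp(\int_0^1 g_\eta\,dW)\mid\tilde N<\varepsilon]\le e^{\|g_\eta\|_{L^2}^2/2}\,/\,\mathbb{P}(\tilde N<\varepsilon)$, which blows up as $\varepsilon\to0$. The uniform bound $\mathbb{E}[\exp(\int_0^1 g\,dW)\mid\tilde N<\varepsilon]\le e^{\|g\|_{L^2}^2/2}$ is true, but it is obtained by a Cameron--Martin shift combined with Anderson's inequality for the symmetric convex ball $\{\tilde N<\varepsilon\}$ --- a Gaussian correlation-type ingredient (a cousin of the \v{S}id\'ak inequality \eqref{Sidak} recorded in this paper) that appears nowhere in your proposal.

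More seriously, your treatment of the finitely many eigenvalues $\lambda_k\ge\tfrac12$ is not an argument: conditional convergence in probability $\xi_k\to0$ says nothing about $\mathbb{E}[e^{\lambda_k\xi_k^2}\mid\tilde N<\varepsilon]$ without uniform integrability of $e^{\lambda_k\xi_k^2}$ under the conditional laws, and producing that uniform integrability when the unconditional moment is already infinite is precisely the hard point; H\"older only makes it worse, since it multiplies the exponents. The mechanism behind the literature proofs is different: split $e_k=g+r$ with $g$ in the dual of the Banach space obtained by completing $H^1$ under $N$, so that $|\int_0^1 g\,dW|\le \|g\|\,\tilde N(W)\le C\varepsilon$ on the conditioning event, and $\|r\|_{L^2}\le\eta$ arbitrarily small; then Khatri--\v{S}id\'ak domination (conditional tails of the slab variable $|\int_0^1 r\,dW|$ are dominated by its unconditional tails) gives $\mathbb{E}[e^{2\lambda_k(\int_0^1 r\,dW)^2}\mid\tilde N<\varepsilon]\le(1-4\lambda_k\eta^2)^{-1/2}$, whence $\limsup_{\varepsilon\to0}\mathbb{E}[e^{\lambda_k\xi_k^2}\mid\tilde N<\varepsilon]\le1$ for \emph{every} $\lambda_k>0$. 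The same domination is what your tail step is missing: $\sum_{k>m}|\lambda_k|\to0$ alone does not control $\mathbb{E}[\exp(\sum_{k>m}|\lambda_k|\xi_k^2)\mid\tilde N<\varepsilon]$ uniformly in $\varepsilon$; one needs the correlation inequality again to get $\mathbb{E}[\exp(\sum_{k>m}|\lambda_k|\xi_k^2)\mid\tilde N<\varepsilon]\le\prod_{k>m}(1-2|\lambda_k|)^{-1/2}$. (A small additional slip: $\operatorname{Tr}(f)=\int_0^1 f(s,s)\,ds$ requires continuity of $f$, as the paper notes; for $f\in L^2$ only the identity $\operatorname{Tr}(f)=\sum_k\lambda_k$ is available, which is all you need.) In short: correct reduction, but the places you label ``domination'', ``direct conditional argument'' and ``tail control'' all require Gaussian rearrangement/correlation inequalities and an abstract Wiener space approximation that your proposal never supplies, so the proof does not close as written.
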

In the classical results, it is known that for fractional Brownian motion, 
both the supremum norm and the H\"older norm are measurable norms. 
We now extend this result to the integral of fractional Brownian motion. 
To this end, it is necessary to first establish a small-ball probability estimate 
for the integral of fractional Brownian motion.

To this end, we first derive an upper bound for its second moment.

\begin{lemma}\label{variance}
	Let \(\sigma\) satisfy Assumption \((A)\).
Define
\[
	U_t^H = \int_0^t \sigma_s \, dB^H_s, 
	\qquad 
	V_h^t = \mathbb{E}\big[(U_{t+h}^H-U_t^H)^2\big].
\]
There exists a constant $c_*>0$ such that 
\begin{equation*}
	\frac{h^{2H}}{V_h^t} \geq c_*, 
	\quad \forall\, 0 \leq t,h \leq 1,\; t+h \leq 1.
\end{equation*}
\end{lemma}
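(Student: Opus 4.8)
The plan is to bound $V_h^t$ from above by a constant multiple of $h^{2H}$, uniformly in $t$ and $h$; the claim $h^{2H}/V_h^t \ge c_*$ is then immediate with $c_*$ the reciprocal of that constant. The key device is to strip the factor $\sigma$ out of the stochastic integral by an integration by parts, reducing everything to increments of $B^H$ whose second moments are known explicitly. Writing the increment as $U_{t+h}^H - U_t^H = \int_t^{t+h}\sigma_s\,dB^H_s$ and applying the Young integration-by-parts formula of Lemma~\ref{young} to $\sigma$ (which is $C^1$, hence H\"older of any order $\le 1$) against $s \mapsto B^H_s - B^H_t$ (a.s. in $C^{H-\varepsilon}$, so the H\"older exponents sum to more than $1$), I would obtain
\begin{equation*}
  U_{t+h}^H - U_t^H = \sigma_{t+h}\big(B^H_{t+h}-B^H_t\big) - \int_t^{t+h}\big(B^H_s - B^H_t\big)\,\sigma'_s\,ds,
\end{equation*}
where the remaining integral is now an ordinary pathwise Lebesgue integral because $\sigma\in C^1$.

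From here the estimate splits via $(a-b)^2 \le 2a^2 + 2b^2$ into two contributions. For the boundary term I would use the stationarity of the increments of fractional Brownian motion, $\mathbb{E}\big[(B^H_{t+h}-B^H_t)^2\big] = h^{2H}$, together with $|\sigma|\le M$, yielding the bound $2M^2 h^{2H}$. For the integral term I would interchange the expectation with the deterministic-measure integral via Minkowski's integral inequality, then use $\big(\mathbb{E}[(B^H_s-B^H_t)^2]\big)^{1/2} = (s-t)^H \le h^H$ and $|\sigma'|\le M':=\|\sigma'\|_\infty$ to get
\begin{equation*}
  \mathbb{E}\Big[\Big(\int_t^{t+h}(B^H_s-B^H_t)\sigma'_s\,ds\Big)^2\Big]
  \le \Big(\int_t^{t+h}|\sigma'_s|\,(s-t)^H\,ds\Big)^2
  \le (M')^2 h^{2H+2}.
\end{equation*}
Combining the two bounds and using $h\le 1$ to absorb $h^{2H+2}\le h^{2H}$ gives $V_h^t \le 2\big(M^2+(M')^2\big)h^{2H}$, so that $c_* = \big(2(M^2+(M')^2)\big)^{-1}$ is admissible.

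The advantage of this route is that it is uniform in $H\in(0,1)$; the principal point requiring care is the justification that the Young integration by parts is legitimate and that, after it, the surviving term is genuinely a pathwise Lebesgue integral to which Minkowski's inequality and Tonelli's theorem (for the interchange of $\mathbb{E}$ and $\int$) apply. As an independent check in the regime $H>\tfrac12$, one may instead invoke the explicit covariance formula of Lemma~\ref{isometry}, namely $V_h^t = H(2H-1)\int_t^{t+h}\!\int_t^{t+h}\sigma_r\sigma_s|r-s|^{2H-2}\,dr\,ds$, bound $\sigma_r\sigma_s\le M^2$, and evaluate $\int_0^h\!\int_0^h|r-s|^{2H-2}\,dr\,ds = h^{2H}/(H(2H-1))$ to recover the sharper constant $V_h^t\le M^2 h^{2H}$; the case $H=\tfrac12$ reduces to the elementary identity $V_h^t=\int_t^{t+h}\sigma_s^2\,ds\le M^2 h$. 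The integration-by-parts argument is what makes the delicate regime $H<\tfrac12$ tractable without manipulating the kernel $K_H^*$ directly, and I expect the verification of that pathwise identity to be the only genuine obstacle.
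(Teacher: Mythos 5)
Your proof is correct, but it takes a genuinely different route from the paper's. The paper argues by cases on $H$: for $H>\tfrac12$ it invokes the explicit covariance formula of Lemma~\ref{isometry}, $V_h^t = H(2H-1)\int_t^{t+h}\!\int_t^{t+h}\sigma_r\sigma_s|s-r|^{2H-2}\,dr\,ds \le C h^{2H}$, while for $H<\tfrac12$ it works directly with the Wiener-integral definition, writing $V_h^t=\int_0^1\bigl(K_H^*(\sigma I_{[t,t+h]})\bigr)^2(s)\,ds$ and estimating the fractional derivative $D_{1^-}^\alpha$ term by term via the Weyl representation, the most delicate step being the bound $\int_0^t\bigl((h+x)^{-\alpha}-x^{-\alpha}\bigr)^2dx\le Ch^{2H}$. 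You instead give a single argument uniform in $H\in(0,1)$: strip $\sigma$ out by Young/classical integration by parts (legitimate since $\sigma\in C^1$ and $B^H\in C^{H-\varepsilon}$ a.s., so the Hölder exponents sum to more than $1$), reduce to $\sigma_{t+h}(B^H_{t+h}-B^H_t)$ plus a pathwise Lebesgue integral against $\sigma'$, and bound the two pieces by $2M^2h^{2H}$ and $(M')^2h^{2H+2}$ via stationarity of fBm increments and Minkowski's integral inequality. What your route buys: brevity, no kernel manipulation, an explicit constant $c_*=\bigl(2(M^2+(M')^2)\bigr)^{-1}$ depending only on $\|\sigma\|_\infty$ and $\|\sigma'\|_\infty$, and no case split. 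What the paper's route buys: it needs only Hölder continuity of $\sigma$ (of order $\gamma>\alpha$), not differentiability, and — more importantly — it works directly from the Wiener-integral definition, so it never needs the identification of the Wiener integral with the pathwise Young integral. Your argument does lean on that identification (stated by the paper just before Lemma~\ref{young}, under $\gamma+H>1$) as its very first step; since the paper supplies this as a standing fact, your proof is complete as written, but it is worth noting that this is where all the kernel-level work you avoid has been hidden.
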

\begin{proof}
Our proof is based on Lemma~\ref{isometry}.
	
For $H > \tfrac{1}{2}$, we have
\begin{align*}
	\mathbb{E}\big[(U_{t+h}^H-U_t^H)^2\big]
	&= H(2H-1)\int_t^{t+h}\!\!\int_t^{t+h} \sigma_r\sigma_s\,|s-r|^{2H-2}\,ds\,dr \\
	&\leq C h^{2H}.
\end{align*}

For $H < \tfrac{1}{2}$,
\begin{align*}
	\mathbb{E}\big[(U_{t+h}^H-U_t^H)^2\big]
	&= \int_0^1 \big\{b_H\Gamma(1-\alpha)s^\alpha D^\alpha_{1^-}\!\big(s^{-\alpha}\sigma_s I_{[t,t+h]}(s)\big)\big\}^2 ds,
\end{align*}
where
\begin{align*}
	&D^\alpha_{1^-}(s^{-\alpha}\sigma_s I_{[t,t+h]}(s))\\
	=& \frac{1}{\Gamma(1-\alpha)}\Bigg(
	\frac{s^{-\alpha}\sigma_s I_{[t,t+h]}(s)}{(1-s)^\alpha}
	- \alpha\int_s^1 \frac{u^{-\alpha}\sigma_u I_{[t,t+h]}(u)-s^{-\alpha}\sigma_s I_{[t,t+h]}(s)}{(u-s)^{\alpha+1}}\,du
	\Bigg).
\end{align*}
Moreover,
\[
	\left|\frac{\sigma_u-\sigma_s}{(u-s)^{\alpha+1}}\right|
	\leq \frac{[\sigma]_\gamma}{(u-s)^{\alpha+1-\gamma}}.
\]
Therefore,
\begin{align*}
	&\mathbb{E}\big[(U_{t+h}^H-U_t^H)^2\big]\\
	\leq & C\Bigg(
	\int_t^{t+h} s^{2\alpha}\Bigg(\frac{s^{-\alpha}\sigma_s}{(t+h-s)^\alpha}
	- \alpha\int_s^{t+h}\frac{u^{-\alpha}\sigma_u-s^{-\alpha}\sigma_s}{(u-s)^{\alpha+1}}\,du\Bigg)^2 ds \\
	&\quad + \int_0^t s^{2\alpha}\Bigg(\int_t^{t+h}(u-s)^{-\alpha-1}u^{-\alpha}\sigma_u\,du\Bigg)^2 ds
	\Bigg) \\
	\leq & Ch^{2H}
	+ C\int_t^{t+h}(t+h-s)^{2(\gamma-\alpha)}\,ds
	+ C\int_0^t\big((t+h-s)^{-\alpha}-(t-s)^{-\alpha}\big)^2 ds \\
	\leq & 2Ch^{2H} + C h^{2(\gamma-\alpha)+1}
	+ C\int_0^t\big((h+x)^{-\alpha}-x^{-\alpha}\big)^2 dx.
\end{align*}
Finally,
\begin{align*}
	&\int_0^t \big((h+x)^{-\alpha}-x^{-\alpha}\big)^2 dx \\
	=& \int_0^{t-h}\big((h+x)^{-\alpha}-x^{-\alpha}\big)^2 dx
	+ \int_{t-h}^t \big((h+x)^{-\alpha}-x^{-\alpha}\big)^2 dx \\
	\leq & C\int_h^1 \frac{h^2}{(x+h)^{2\alpha}x^2} dx
	+ \int_0^h \frac{h^{2\alpha}}{(x+h)^{2\alpha}x^{2\alpha}} dx \\
	\leq & C(h^{1-2\alpha}+h^{2-2\alpha}) \\
	\leq & Ch^{2H}.
\end{align*}
Hence
\[
	\frac{h^{2H}}{V_h^t} \geq c^*.
\]
\end{proof}

    Before proving the small ball probability, we first present two lemmas.
	\begin{lemma}
Let $Z \sim N(0,1)$. Then the following bounds hold:
\begin{equation}
    \mathbb{P}(|Z| \le t) \ge \frac{t}{2}, \quad \forall\, 0 \le t \le 1,  \label{smpexp1}
\end{equation}
and
\begin{equation}
    \mathbb{P}(|Z| \le st) \ge \exp\!\left( -\theta_s e^{-(st)^2/2} \right), 
    \quad s > 0,\ t \ge 1,\label{smpexp2}
\end{equation}
where $\theta_s = \left( 1 - e^{-s^2/2} \right)^{-1}$.
\end{lemma}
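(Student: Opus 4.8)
The plan is to reduce both estimates to elementary bounds on the standard normal density $\varphi(x)=\tfrac{1}{\sqrt{2\pi}}e^{-x^2/2}$, writing $\mathbb{P}(|Z|\le a)=2\int_0^a\varphi(x)\,dx$ and $\mathbb{P}(|Z|>a)=2\int_a^\infty\varphi(x)\,dx$. For \eqref{smpexp1} I would start from the elementary inequality $e^{-u}\ge 1-u$ with $u=x^2/2$, giving $e^{-x^2/2}\ge 1-\tfrac{x^2}{2}$, and integrate:
\[
\mathbb{P}(|Z|\le t)=\frac{2}{\sqrt{2\pi}}\int_0^t e^{-x^2/2}\,dx\ge \frac{2}{\sqrt{2\pi}}\Big(t-\frac{t^3}{6}\Big).
\]
For $0\le t\le 1$ one has $1-\tfrac{t^2}{6}\ge \tfrac56$, so the right-hand side is at least $\tfrac{2}{\sqrt{2\pi}}\cdot\tfrac56\,t=\tfrac{5}{3\sqrt{2\pi}}\,t>\tfrac t2$, which is the claim. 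This step is routine.

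For \eqref{smpexp2} the key idea is to avoid exponentiating a tail bound directly: the inequality $1-q\ge e^{-q}$ runs the wrong way, so a crude estimate $\mathbb{P}(|Z|>st)\le Q$ does \emph{not} yield $\mathbb{P}(|Z|\le st)\ge e^{-Q}$. Instead I would pass to logarithms and use the elementary bound $-\log(1-q)\le \tfrac{q}{1-q}$ (valid for $q\in[0,1)$) with $q=\mathbb{P}(|Z|>st)$, which reduces the claim to the ratio estimate
\[
\frac{\mathbb{P}(|Z|>st)}{\mathbb{P}(|Z|\le st)}\le \theta_s\,e^{-(st)^2/2}.
\]
For the numerator I would apply the standard Gaussian tail bound $\int_a^\infty\varphi\le \tfrac{\varphi(a)}{a}$ (from $\varphi(x)\le\tfrac{x}{a}\varphi(x)$ on $[a,\infty)$ and $\int_a^\infty x\varphi(x)\,dx=\varphi(a)$) with $a=st$, together with $t\ge 1$, to get $\mathbb{P}(|Z|>st)\le \tfrac{2}{s}\varphi(st)$; for the denominator I would use $\int_0^{st}\varphi\ge\int_0^s\varphi$, again by $t\ge 1$. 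After cancelling the factor $2$ and the normalising constant, the target reduces to the inequality
\[
1-e^{-s^2/2}\le s\int_0^s e^{-x^2/2}\,dx.
\]

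The crux, and the place where the constant $\theta_s=(1-e^{-s^2/2})^{-1}$ is produced \emph{exactly}, is this last inequality. I would prove it by setting $F(s)=s\int_0^s e^{-x^2/2}\,dx-\big(1-e^{-s^2/2}\big)$, noting $F(0)=0$ and computing $F'(s)=\int_0^s e^{-x^2/2}\,dx\ge 0$, so that $F\ge 0$ on $[0,\infty)$. Substituting back gives $\big(s\int_0^s e^{-x^2/2}\,dx\big)^{-1}\le\theta_s$, hence the ratio bound, which combined with the logarithmic inequality yields $-\log\mathbb{P}(|Z|\le st)\le\theta_s e^{-(st)^2/2}$, i.e.\ \eqref{smpexp2}. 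I expect the ratio reformulation, rather than the tail and denominator estimates (which are standard), to be the one genuinely delicate point: it is what lets the factor $1/(1-q)$ be absorbed into $\theta_s$ and recovers the stated constant uniformly in $st$, avoiding a case distinction on whether $\theta_s e^{-(st)^2/2}$ is large or small.
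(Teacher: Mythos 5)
Your proof is correct. Note, however, that the paper does not prove this lemma at all: it is stated as a known Gaussian estimate (it comes from the small-ball literature, cf.\ Kuelbs--Li--Shao, which the paper cites for the proof of the subsequent small-ball lemma), so there is no in-paper argument to compare against. Your derivation is a valid, self-contained replacement. Both halves check out: for \eqref{smpexp1}, the bound $\tfrac{2}{\sqrt{2\pi}}\bigl(t-\tfrac{t^3}{6}\bigr)\ge \tfrac{5}{3\sqrt{2\pi}}\,t>\tfrac{t}{2}$ is fine; for \eqref{smpexp2}, the chain $-\log(1-q)\le \tfrac{q}{1-q}$, the tail bound $\mathbb{P}(|Z|>st)\le \tfrac{2}{s}\varphi(st)$, the denominator bound via $t\ge 1$, and the monotonicity argument showing $1-e^{-s^2/2}\le s\int_0^s e^{-x^2/2}\,dx$ (since $F(0)=0$ and $F'(s)=\int_0^s e^{-x^2/2}\,dx\ge 0$) are all correct, and together they recover the constant $\theta_s$ exactly. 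For comparison, the standard route in the literature is slightly shorter: one uses the Chernoff-type bound $q=\mathbb{P}(|Z|>st)\le e^{-(st)^2/2}\le e^{-s^2/2}$ (valid since $t\ge 1$), and then the same elementary inequality $1-q\ge \exp\bigl(-\tfrac{q}{1-q}\bigr)$ gives $1-q\ge \exp\bigl(-\theta_s\,e^{-(st)^2/2}\bigr)$ directly, with $\tfrac{1}{1-q}\le \theta_s$; this avoids both the density-ratio estimate and your integral inequality. Your version trades that for a $\varphi(a)/a$ tail bound plus a calculus lemma, which is marginally longer but equally rigorous, and your observation that a naive exponentiation of a tail bound runs the wrong way is exactly the right point to flag.
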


\begin{lemma}[{\cite{Sidak1968}}]
If $(Z_1,\dots,Z_k)$ has a normal distribution with mean $0$ 
and an arbitrary correlation matrix, then 
\begin{equation}\label{Sidak}
    \mathbb{P}\!\left(|Z_1|<c_1, \dots, |Z_k|<c_k\right) 
    \;\geq\; \prod_{i=1}^k \mathbb{P}(|Z_i|<c_i).
\end{equation}  
\end{lemma}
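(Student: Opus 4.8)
The inequality~\eqref{Sidak} is the classical Gaussian correlation inequality of \v{S}id\'ak, and the cleanest self-contained route I would take rests on Anderson's inequality together with a one-dimensional correlation (Chebyshev) argument, proceeding by induction on $k$. First I would normalize: writing $\sigma_i^2 = \mathrm{Var}(Z_i)$, I replace $Z_i$ by $Z_i/\sigma_i$ and $c_i$ by $c_i/\sigma_i$ (discarding any degenerate coordinate with $\sigma_i = 0$, for which the marginal probability is $1$), which leaves both sides of~\eqref{Sidak} unchanged. Thus I may assume each $Z_i$ has unit variance with correlations $\rho_{ij}$. The induction reduces everything to the single \emph{peeling} estimate
\begin{equation*}
    \mathbb{P}\!\left(\bigcap_{i=1}^k\{|Z_i|<c_i\}\right)
    \;\geq\;
    \mathbb{P}(|Z_1|<c_1)\,
    \mathbb{P}\!\left(\bigcap_{i=2}^k\{|Z_i|<c_i\}\right),
\end{equation*}
since $(Z_2,\dots,Z_k)$ is again a centred Gaussian vector to which the inductive hypothesis applies, and the base case $k=1$ is immediate.

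To prove the peeling step I would regress $Z_1$ out of the remaining coordinates. For $j\geq 2$ set $W_j = Z_j - \rho_{1j} Z_1$; by joint Gaussianity the vector $W=(W_2,\dots,W_k)$ is uncorrelated with, hence independent of, $Z_1$. Conditioning on $Z_1=x$ then gives
\begin{equation*}
    \mathbb{P}\!\left(\bigcap_{i=1}^k\{|Z_i|<c_i\}\right)
    = \mathbb{E}\big[\mathbf{1}_{\{|Z_1|<c_1\}}\,h(Z_1)\big],
    \qquad
    h(x):=\mathbb{P}(W\in K-x\rho),
\end{equation*}
where $K=\prod_{j=2}^k(-c_j,c_j)$ and $\rho=(\rho_{12},\dots,\rho_{1k})$, while the right-hand factor above equals $\mathbb{E}[h(Z_1)]$. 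So the claim becomes the correlation inequality $\mathbb{E}[\mathbf{1}_{\{|Z_1|<c_1\}}h(Z_1)]\geq \mathbb{E}[\mathbf{1}_{\{|Z_1|<c_1\}}]\,\mathbb{E}[h(Z_1)]$.

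The two ingredients I would then establish are that $h$ is even and nonincreasing in $|x|$. Evenness follows from $W\stackrel{d}{=}-W$ and $K=-K$; for monotonicity I would write $h(x)=\int_K f_W(u-x\rho)\,du$, where $f_W$ is the even, log-concave Gaussian density of $W$, and apply Anderson's inequality, which says that the shifted integral of a symmetric quasi-concave density over a symmetric convex set is nonincreasing in the magnitude of the shift. Since $\mathbf{1}_{\{|x|<c_1\}}$ is also even and nonincreasing in $|x|$, both $\mathbf{1}_{\{|Z_1|<c_1\}}$ and $h(Z_1)$ are nonincreasing functions of the single real random variable $|Z_1|$, and the one-dimensional Chebyshev (Harris) association inequality gives the nonnegative covariance, completing the peeling step and hence the induction.

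The main obstacle is precisely the monotonicity $h(x_1)\geq h(x_2)$ for $0\le x_1\le x_2$, i.e.\ Anderson's inequality for a symmetric log-concave density integrated over a symmetric convex box. Everything else — the normalization, the induction, the regression decomposition, and the final Chebyshev step — is routine once this geometric fact is in hand. I would either invoke Anderson's theorem directly or, if a fully self-contained argument is desired, reprove the required monotonicity through the Pr\'ekopa--Leindler (equivalently Brunn--Minkowski) inequality applied to the superlevel sets of $f_W$.
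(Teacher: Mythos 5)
The paper offers no proof of this lemma at all---it is imported verbatim from \v{S}id\'ak's 1968 paper---so there is no internal argument to compare yours against; I can only judge your proof on its own merits. On those merits it is the standard modern proof of \v{S}id\'ak's inequality, and its structure is correct: the reduction to unit variances, the induction on $k$ via the single peeling estimate, the regression $W_j = Z_j - \rho_{1j}Z_1$ (which makes $W=(W_2,\dots,W_k)$ independent of $Z_1$ by joint Gaussianity), the rewriting of both sides in terms of $h(x)=\mathbb{P}(W\in K-x\rho)$, the evenness of $h$, the monotonicity of $h$ in $|x|$ along the ray $\rho$ via Anderson's inequality, and the concluding Chebyshev/Harris association inequality for two even functions nonincreasing in $|Z_1|$ are all sound, and together they do yield \eqref{Sidak}.

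One step needs patching before the argument is airtight: you write $h(x)=\int_K f_W(u-x\rho)\,du$ ``where $f_W$ is the even, log-concave Gaussian density of $W$,'' but $W$ need not have a density. The lemma allows an arbitrary correlation matrix, and the residual vector $W$ can be degenerate even when $(Z_1,\dots,Z_k)$ is not: for instance $Z_2=Z_1$ forces $W_2\equiv 0$. Anderson's theorem in the form you invoke requires a symmetric unimodal density, so the degenerate case falls outside it. The standard repair is a perturbation argument: prove the peeling step for $W_\epsilon = W + \epsilon G$, with $G$ a standard Gaussian vector independent of everything (now $f_{W_\epsilon}$ exists and is symmetric and log-concave), and let $\epsilon\to 0$. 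Since weak convergence gives only one-sided bounds on open and closed sets, pass to the limit with the closed box on the larger side of the inequality and a slightly shrunken box on the smaller side, then remove the shrinking by continuity from below; equivalently, prove the whole lemma for nondegenerate covariance matrices and recover the general case by applying this same two-step approximation to \eqref{Sidak} itself. With that routine addition your proof is complete.
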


\begin{lemma}\label{smp}
	Let $\{U_t,\,0 \leq t \leq 1\}$ be a separable centered Gaussian process with $U_0 = 0$. Define  
\begin{equation*}
	V_h^t = \mathbb{E}\bigl[(U_{t+h} - U_t)^2\bigr].
\end{equation*}
Assume that there exists a concave function $\sqrt{V_\cdot}$ such that  
\begin{equation*}
	\frac{\sqrt{V_h}}{\sqrt{V_h^t}} \geq c_*, 
	\quad \forall\, 0 \leq t,h \ \text{with}\ t+h \leq 1,
\end{equation*}
and let $f \in C_0[0,1]$ be a nondecreasing function that is strictly positive on $(0,1]$.  
Moreover, suppose that for some $\beta > 0$, the function  
\[
\varepsilon \mapsto \frac{\sqrt{V_\varepsilon}}{\varepsilon^\beta f(\varepsilon)}
\]
is nondecreasing on $(0,1]$.  

Then there exists a constant $M_\beta > 0$, depending only on $\beta$ and $c_*$, such that  
\begin{equation*}
	\mathbb{P}\!\left(
		\sup_{0 \leq s \neq t \leq 1}
		\frac{|U_t - U_s|}{f(|t-s|)}
		\leq \frac{\sqrt{V_\varepsilon}}{f(\varepsilon)} 
	\right) 
	\geq \exp\!\left(-\frac{M_\beta}{\varepsilon}\right).
\end{equation*}
\end{lemma}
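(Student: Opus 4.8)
The plan is to run a dyadic chaining argument, reduce the Hölder-type seminorm to a countable family of one-dimensional Gaussian small-ball events, decouple them with Šidák's inequality \eqref{Sidak}, and finally control each factor with the elementary bounds \eqref{smpexp1}--\eqref{smpexp2}. First I would restrict attention to the dyadic points $t_{n,k}=k2^{-n}$, $0\le k\le 2^n$, which is legitimate since $U$ is separable, and introduce the increments $\Delta_{n,k}=U_{(k+1)2^{-n}}-U_{k2^{-n}}$. The deterministic core is a telescoping estimate: if $|\Delta_{n,k}|\le a_n$ for every $n\ge 1$ and every $k$, then for $s<t$ with $2^{-(m+1)}<t-s\le 2^{-m}$ one joins $s$ and $t$ through their dyadic expansions and obtains $|U_t-U_s|\le 2\sum_{n\ge m+1}a_n$. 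Choosing the thresholds $a_n$ so that $2\sum_{n\ge m+1}a_n\le \eta_\varepsilon f(2^{-(m+1)})$ for all $m$, where $\eta_\varepsilon:=\sqrt{V_\varepsilon}/f(\varepsilon)$, and using that $f$ is nondecreasing to pass from $f(2^{-(m+1)})$ to $f(t-s)$, this forces $\sup_{s\ne t}|U_t-U_s|/f(|t-s|)\le \eta_\varepsilon$. Hence the event $\bigcap_{n,k}\{|\Delta_{n,k}|\le a_n\}$ is contained in the event whose probability we must bound from below.

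Next, since the family $\{\Delta_{n,k}\}$ is jointly centered Gaussian, Šidák's inequality \eqref{Sidak} applied to the finite sub-family with $n\le L$ gives $\mathbb P(\bigcap_{n\le L,k}\{|\Delta_{n,k}|\le a_n\})\ge \prod_{n\le L}\prod_k \mathbb P(|\Delta_{n,k}|\le a_n)$. Writing $\bar\sigma_n:=c_*^{-1}\sqrt{V_{2^{-n}}}$, the hypothesis $\sqrt{V_h}/\sqrt{V_h^t}\ge c_*$ yields $\mathrm{sd}(\Delta_{n,k})\le\bar\sigma_n$ uniformly in $k$, so each factor is at least $\mathbb P(|Z|\le r_n)$ with $Z\sim N(0,1)$ and $r_n:=a_n/\bar\sigma_n$. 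Applying \eqref{smpexp1} when $r_n\le 1$ and \eqref{smpexp2} (with $t=1$, $s=r_n$) when $r_n\ge 1$, taking logarithms and letting $L\to\infty$ (the finite intersections decrease to the full event, so their probabilities converge to that of the countable intersection), the problem reduces to showing that $\sum_{n}2^n\bigl(-\log\mathbb P(|Z|\le r_n)\bigr)=O(1/\varepsilon)$, uniformly in $\varepsilon$.

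The summability is where the three structural hypotheses enter. Concavity of $\sqrt{V_\cdot}$ makes $\sqrt{V_h}$ nondecreasing, subadditive and doubling, which legitimizes the telescoping and the comparison of variances across scales. The monotonicity of $\varepsilon\mapsto \sqrt{V_\varepsilon}/(\varepsilon^\beta f(\varepsilon))$ together with $f$ nondecreasing gives the geometric decay $\sqrt{V_{2^{-n}}}\le \sqrt{V_{2^{-(m+1)}}}\,2^{-(n-m-1)\beta}$ for $n\ge m+1$, so the tail sums $\sum_{n\ge m+1}a_n$ converge and the threshold choice above is feasible; the same monotonicity, read in the other direction, gives $r_n\gtrsim (2^n\varepsilon)^\beta$ at the fine scales $2^{-n}\le\varepsilon$ and $\log(1/r_n)\lesssim \beta\log(1/(2^n\varepsilon))$ at the coarse scales $2^{-n}\ge\varepsilon$. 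With $N\approx \log_2(1/\varepsilon)$, the coarse block contributes $\sum_{n\le N}2^n\log(2/r_n)\asymp 2^N\sum_{j\ge0}2^{-j}(1+\beta j)$ and the fine block contributes $\sum_{n>N}2^n\theta\,e^{-r_n^2/2}\asymp 2^N\sum_{j\ge1}2^{j}e^{-2^{2\beta j-1}}$; both geometric series converge to constants depending only on $\beta$ and $c_*$, so the total is $O(2^N)=O(1/\varepsilon)$, giving the bound $\exp(-M_\beta/\varepsilon)$.

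The main obstacle is the simultaneous calibration of the thresholds $a_n$: they must be large enough that the telescoping in the first step reconstructs the full Hölder bound at every scale, yet small enough that $\sum_n 2^n(-\log\mathbb P_n)$ stays $O(1/\varepsilon)$ rather than diverging through the $2^n$ increments present at each level. The balance is dictated precisely by the interplay of the concavity of $\sqrt{V_\cdot}$ and the $\beta$-monotonicity, and the delicate point is to carry the constants through both the coarse regime ($r_n\le 1$) and the fine regime ($r_n\ge 1$) so that the final $M_\beta$ depends only on $\beta$ and $c_*$, and not on $\varepsilon$ or on the particular process.
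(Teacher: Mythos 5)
Your overall architecture --- restriction to dyadic increments, decoupling via \v{S}id\'ak's inequality \eqref{Sidak}, per-factor control via \eqref{smpexp1}--\eqref{smpexp2}, and a coarse/fine split at $2^{-N}\approx\varepsilon$ --- is exactly the architecture of the paper's proof (which follows Kuelbs--Li--Shao). The genuine gap is in the step you yourself flag as delicate: the calibration of the thresholds $a_n$, and as stated your two requirements are mutually inconsistent, so the argument cannot be completed along the lines you describe. Your chaining requirement, $2\sum_{n\ge m+1}a_n\le\eta_\varepsilon f(2^{-(m+1)})$ for all $m$, applied with $m=n-1$ forces $a_n\le\tfrac12\,\eta_\varepsilon f(2^{-n})$ for every $n$. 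On the other hand, your probability requirement $r_n\ge c\,(2^n\varepsilon)^\beta$ forces $a_n\ge\tfrac{c}{c_*}\sqrt{V_{2^{-n}}}\,(2^n\varepsilon)^\beta$, and the monotonicity hypothesis, read between $\varepsilon$ and $2^{-n}\ge\varepsilon$, gives $\sqrt{V_{2^{-n}}}\,(2^n\varepsilon)^\beta\ge\eta_\varepsilon f(2^{-n})$; hence at every coarse scale $a_n\ge\tfrac{c}{c_*}\,\eta_\varepsilon f(2^{-n})$, so each single scale already consumes a fixed fraction of the entire tail budget, and summing over the $\sim\log_2(1/\varepsilon)$ coarse scales requires $\sum_{n\ge m+1}f(2^{-n})\le\tfrac{c_*}{2c}f(2^{-(m+1)})$ --- a Dini-type condition on $f$ that is nowhere among the hypotheses. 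This is not a cosmetic issue about constants. Take $U$ to be a fractional Brownian motion with Hurst index $H=\beta+\gamma$, $\sqrt{V_h}=h^{H}$, $f(h)=h^{\gamma}$: for every $\gamma\in(0,1-\beta)$ all hypotheses of the lemma hold with the \emph{same} $\beta$ and $c_*=1$ (the monotone ratio $\sqrt{V_h}/(h^\beta f(h))\equiv1$ is saturated). In this family your two requirements force $c\le\tfrac12(1-2^{-\gamma})$, which tends to $0$ as $\gamma\downarrow0$, and your own coarse-block estimate $\sum_{n\le N}2^n\log(2/r_n)$ then contains a term of order $\tfrac1\varepsilon\log(2/c)\to\infty$. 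So any constant your scheme produces necessarily depends on $\gamma$, i.e.\ on the modulus $f$, and blows up within a class of instances on which $(\beta,c_*)$ is fixed --- contradicting the conclusion that $M_\beta$ depends only on $\beta$ and $c_*$. The root cause: the monotonicity hypothesis guarantees decay of $\sqrt{V_{2^{-n}}}$ at rate exactly $2^{-n\beta}$, while you demand growth of $a_n/\sqrt{V_{2^{-n}}}$ at rate exactly $(2^n\varepsilon)^\beta$; the two rates cancel, so the ``geometric convergence'' of the tail sums that your feasibility remark invokes is illusory precisely in the borderline (saturated) case, which is the case the lemma is actually applied to in this paper.

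What the paper's proof does differently, and what your proposal is missing, is twofold. First, it uses the monotonicity hypothesis once and for all to \emph{eliminate} $f$ from the targets: it suffices to enforce $|U_t-U_s|\le\sqrt{V_\varepsilon}$ at separations $\ge\varepsilon$ and $|U_t-U_s|\le 2^{j\beta}\sqrt{V(\varepsilon 2^{-j})}$ at separations $\approx\varepsilon 2^{-j}$, both of which are $\le\eta_\varepsilon f(|t-s|)$. Second, the thresholds are taken at the \emph{geometric mean} of your two incompatible extremes: in the paper's notation, $y_{j,l}\,f(\varepsilon 2^{-j-1})\propto 2^{\beta(l+j)/2}\sqrt{V(\varepsilon 2^{-l})}$ for the local chaining and $x_l\propto\sqrt{V\bigl((2/3)^{|l-n_\varepsilon|}\varepsilon\bigr)}$ for the global grid, i.e.\ thresholds tapering geometrically away from the scale $\varepsilon$ with the \emph{half} exponent $\beta/2$. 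Relative to the target at window scale $j$ these decay like $2^{-\beta(l-j)/2}$, so the chaining sums converge with constants depending only on $\beta$; relative to the local standard deviation they still grow like $2^{\beta(l+j)/2}$, so $\sum_j\sum_l 2^l e^{-\bar c_*^2 2^{\beta(l+j)}/2}<\infty$ and the total cost is $O(1/\varepsilon)$. With the full exponent $\beta$ (your choice) the first property fails, as shown above; with exponent $0$ (thresholds $\propto\sqrt{V_{2^{-n}}}$, which is all your geometric-decay remark actually supports) the second fails, since then $r_n$ does not grow and the $2^n$ factors at fine scales make the product vanish. Only an intermediate taper, e.g.\ $a_n\asymp(1-2^{-\beta/2})\sqrt{V_{2^{-n}}}\,(2^n\varepsilon)^{\beta/2}$ at fine scales with an analogous $\varepsilon$-centered choice at coarse scales, satisfies both; this is precisely the content of the factors $2^{\beta(l-j)/2}2^{j\beta}$ and $(3/2)^{-|l-n_\varepsilon|}$ in the paper's $x_l,y_{j,l}$, organized there as a deliberately redundant two-parameter family of events that \v{S}id\'ak decouples at no cost. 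Two minor points, both fixable: your telescoping constant should be $4$ rather than $2$ (joining the two level-$(m+1)$ dyadic approximants of $s$ and $t$ costs additional increments), and the reduction to the dyadic grid is indeed justified by separability.
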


\begin{proof}
Our method follows the approach of \cite{KuelbsLiShao1995}. 
By time discretization, we obtain the following lower bound by inequality~\eqref{Sidak}:
\begin{align*} 
&\mathbb{P}\!\left( 
   \sup_{0 \leq s \neq t \leq 1} 
   \frac{|U_t - U_s|}{f(|t-s|)} 
   \leq \frac{\sqrt{V_\varepsilon}}{f(\varepsilon)} 
 \right) \\
 \geq & \mathbb{P}\Biggl(
   \max_{1 \leq i \leq 2^l} 
   |U_{i/2^l} - U_{(i-1)/2^l}| \leq x_l, \ \forall\, l \geq 1, \\
&\qquad  
   \max_{0 \leq i \leq 2^j/\varepsilon}
   \max_{1 \leq m \leq 2^{l-j}}
   \frac{|U_{(m+1)\varepsilon/2^l + i\varepsilon/2^j} 
          - U_{m\varepsilon/2^l + i\varepsilon/2^j}|}
        {f(\varepsilon/2^{j+1})} 
   \leq y_{j,l},  \\ 
  &\qquad \forall\, l \geq j+1,\ j \geq 0 
 \Biggr) \\
 \geq & A \cdot B, 
\end{align*} 
where 
\begin{equation*} 
A = \prod_{l=1}^\infty \prod_{i=1}^{2^l} 
    \mathbb{P}\!\left(
      |U_{i/2^l} - U_{(i-1)/2^l}| \leq x_l
    \right), 
\end{equation*} 
and 
\begin{equation*} 
B = \prod_{j=0}^\infty 
    \prod_{l=j+1}^\infty 
    \prod_{m=1}^{2^{l-j}} 
    \prod_{i=0}^{2^j/\varepsilon} 
    \mathbb{P}\!\left(
      \frac{|U_{(m+1)\varepsilon/2^l + i\varepsilon/2^j} 
             - U_{m\varepsilon/2^l + i\varepsilon/2^j}|}
           {f(\varepsilon/2^{j+1})} 
      \leq y_{j,l}
    \right). 
\end{equation*}  
Here 
\begin{align*} 
x_l &= \frac{1 - 2^{-\beta/2}}{4} 
       \sqrt{V\!\left( (3/2)^{-|l-n_\varepsilon|}\varepsilon \right)}, \\ 
y_{j,l} &= \frac{2^{\beta(l-j)/2} \, 2^{j\beta} \, (1 - 2^{-\beta/2}) 
          \sqrt{V(\varepsilon 2^{-l})}}
         {3 f(\varepsilon 2^{-j-1})}. 
\end{align*} 

There exits a constant $n_\varepsilon$ such that 
\begin{equation*} 
1/\varepsilon\leq 2^{n_\varepsilon} \leq 2/\varepsilon. 
\end{equation*} 
Then 
\begin{align*} 
A &= \prod_{l=1}^\infty \prod_{i=1}^{2^l} \mathbb{P}\big(|U_{i/2^l}-U_{(i-1)/2^l}|\leq x_l\big)\\ 
&= \prod_{l=1}^\infty \prod_{i=1}^{2^l} \mathbb{P}\Big(\frac{|U_{i/2^l}-U_{(i-1)/2^l}|}{\sqrt{V_{1/2^l}^{(i-1)/2^l}}} \leq \frac{x_l}{\sqrt{V_{1/2^l}^{(i-1)/2^l}}} \Big)\\ 
&= \prod_{l=1}^\infty \prod_{i=1}^{2^l} \mathbb{P}\Big(\frac{|U_{i/2^l}-U_{(i-1)/2^l}|}{\sqrt{V_{1/2^l}^{(i-1)/2^l}}} \leq \frac{1-2^{-\beta/2}}{4} \frac{\sqrt{V_{\varepsilon (2/3)^{|l-n_\varepsilon|}}}}{\sqrt{V_{1/2^l}^{(i-1)/2^l}}} \Big)\\ 
&\geq \prod_{l=1}^\infty \prod_{i=1}^{2^l} \mathbb{P}\Big(\frac{|U_{i/2^l}-U_{(i-1)/2^l}|}{\sqrt{V_{1/2^l}^{(i-1)/2^l}}} \leq c_* \frac{1-2^{-\beta/2}}{4} \frac{\sqrt{V_{\varepsilon (2/3)^{|l-n_\varepsilon|}}}}{\sqrt{V_{1/2^l}}} \Big)\\ 
&= \prod_{l=1}^\infty \mathbb{P}\Big(|Z| \leq c_* \frac{1-2^{-\beta/2}}{4} \frac{\sqrt{V_{\varepsilon (2/3)^{|l-n_\varepsilon|}}}}{\sqrt{V_{1/2^l}}} \Big)^{2^l} \\ 
&= A_1 \cdot A_2, 
\end{align*} 
where 
\begin{equation*} 
A_1 = \prod_{l=1}^{n_\varepsilon} \mathbb{P}\Big(|Z|\leq c_* \frac{1-2^{-\beta/2}}{4} \frac{\sqrt{V_{\varepsilon(2/3)^{|l-n_\varepsilon|}}}}{\sqrt{V_{1/2^l}}} \Big)^{2^l},
\end{equation*} 
and 
\begin{equation*} 
A_2 = \prod_{l=n_\varepsilon+1}^\infty \mathbb{P}\Big(|Z|\leq c_* \frac{1-2^{-\beta/2}}{4} \frac{\sqrt{V_{\varepsilon(2/3)^{|l-n_\varepsilon|}}}}{\sqrt{V_{1/2^l}}} \Big)^{2^l}. 
\end{equation*} 
For the $A_1$ term,
\begin{align} 
A_1 &= \prod_{l=1}^{n_\varepsilon} \mathbb{P}\Big(|Z|\leq c_* \frac{1-2^{-\beta/2}}{4} \frac{\sqrt{V_{\varepsilon(2/3)^{|l-n_\varepsilon|}}}}{\sqrt{V_{1/2^l}}}\Big)^{2^l} \notag\\ 
&= \prod_{l=1}^{n_\varepsilon} \mathbb{P}\Big(|Z|\leq c_* \frac{1-2^{-\beta/2}}{4} \frac{\sqrt{V_{\varepsilon(2/3)^{(n_\varepsilon-l)}}}}{\sqrt{V_{1/2^l}}}\Big)^{2^l} \notag\\ 
&\geq \prod_{l=1}^{n_\varepsilon} \mathbb{P}\Big(|Z|\leq c_* \frac{1-2^{-\beta/2}}{4} \frac{\sqrt{V_{2^{-n_\varepsilon}(2/3)^{(n_\varepsilon-l)}}}}{\sqrt{V_{1/2^l}}}\Big)^{2^l} \notag\\ 
&= \prod_{l=1}^{n_\varepsilon} \mathbb{P}\Big(|Z|\leq c_* \frac{1-2^{-\beta/2}}{4} \frac{\sqrt{V_{3^{-(n_\varepsilon-l)}2^{-l}}}}{\sqrt{V_{1/2^l}}}\Big)^{2^l} \notag\\ 
&\geq \prod_{l=1}^{n_\varepsilon} \mathbb{P}\Big(|Z|\leq c_* \frac{1-2^{-\beta/2}}{4}\,3^{-(n_\varepsilon-l)}\Big)^{2^l} \label{conve}\\ 
&\geq \prod_{l=1}^{n_\varepsilon} \left(\tfrac{c_*^1}{2}\,3^{-(n_\varepsilon-l)}\right)^{2^l} \label{smpex1}\\ 
&= \exp\!\Big(-\sum_{l=1}^{n_\varepsilon} 2^l \{-\ln c_*^1+(n_\varepsilon-l)\ln3 \}\Big) \notag\\ 
&= \exp\!\Big(2(2^{n_\varepsilon}-1)\ln c_*^1 + (2(n_{\varepsilon}+1)-2^{n_{\varepsilon}+1})\ln3 \Big) \notag\\ 
&\geq \exp\!\Big(-2^{n_\varepsilon} \{2(1-2^{-n_\varepsilon})\ln(1/c_*^1) +(2-2(n_{\varepsilon}+1)/2^{n_\varepsilon})\ln3 \}\Big) \notag\\ 
&\geq \exp\!\Big(-\tfrac{1}{\varepsilon}\,4\ln\!\Big(\tfrac{3}{c_*^1}\Big) \Big), \notag 
\end{align} 
where 
\[ c_*^1 = \min\Bigl\{\,c_* \frac{1-2^{-\beta/2}}{4},\,1\Bigr\}. \] 
The inequality \eqref{conve} follows from the concavity of $\sqrt{V_\cdot}$, while \eqref{smpex1} is obtained using the simplified estimate analogous to \eqref{smpexp1}. 
Concerning $A_2$, we have 
\begin{align*} 
A_2 &= \prod_{l=n_\varepsilon+1}^\infty \mathbb{P}\!\Big(|Z|\leq c_* \frac{1-2^{-\beta/2}}{4}\, \frac{\sqrt{V_{\varepsilon(2/3)^{|l-n_\varepsilon|}}}}{\sqrt{V_{1/2^l}}}\Big)^{2^l}\\ 
&= \prod_{l=n_\varepsilon+1}^\infty \mathbb{P}\!\Big(|Z|\leq c_* \frac{1-2^{-\beta/2}}{4}\, \frac{\sqrt{V_{\varepsilon(2/3)^{\,l-n_\varepsilon}}}}{\sqrt{V_{1/2^l}}}\Big)^{2^l}\\ 
&= \prod_{l=n_\varepsilon+1}^\infty \mathbb{P}\!\Big(|Z|\leq c_* \frac{1-2^{-\beta/2}}{4}\, \frac{\sqrt{V_{(4/3)^{l-n_\varepsilon}2^{-l}}}}{\sqrt{V_{1/2^l}}}\Big)^{2^l}\\ 
&\geq \prod_{l=n_\varepsilon+1}^\infty \mathbb{P}\!\Big(|Z|\leq c_* \frac{1-2^{-\beta/2}}{4}\,(4/3)^{\beta(l-n_\varepsilon)} \Big)^{2^l}\\ 
&\geq \exp\!\Big( -\sum_{l=n_\varepsilon+1}^\infty 2^l\, \theta(c_*')\, e^{-\tfrac{1}{2}(c_*'(4/3)^{\beta(l-n_\varepsilon)})^2} \Big) \\ 
&\geq \exp\!\Big( -2^{n_\varepsilon}\theta(c_*') \sum_{l=1}^\infty 2^l\, e^{-\tfrac{1}{2}(c_*'(4/3)^{\beta l})^2} \Big) \\ 
&\geq \exp\!\Big( -\tfrac{1}{\varepsilon}\,2\theta(c_*') \sum_{l=1}^\infty 2^l\, e^{-\tfrac{1}{2}(c_*'(4/3)^{\beta l})^2} \Big), 
\end{align*} 
where 
\[ c_*' = c_* \frac{1-2^{-\beta/2}}{4}. \] 
Consequently, combining the bounds for $A_1$ and $A_2$ yields 
\[ A = A_1\cdot A_2 \;\;\geq\;\; \exp\!\Big( -\tfrac{1}{\varepsilon}\Big( 2\theta(c_*')\sum_{l=1}^\infty 2^l e^{-\tfrac{1}{2}(c_*'(4/3)^{\beta l})^2} +4\ln\!\Big(\tfrac{3}{c_*^1}\Big) \Big) \Big). \] 
Respect to $B$, we have 
\begin{align*} 
B&=\prod^\infty_{j=0}\prod^\infty_{l=j+1} \prod^{2^{l-j}}_{m=1}\prod^{2^j/\varepsilon}_{i=0}\mathbb{P}\Big(\frac{|U_{(m+1)\varepsilon/2^l+i\varepsilon/2^j}-U_{m\varepsilon/2^l+i\varepsilon/2^j}|}{f(\varepsilon/ 2^{j+1})}\leq y_{j,l}\Big)\\ 
&= \prod^\infty_{j=0}\prod^\infty_{l=j+1} \prod^{2^{l-j}}_{m=1}\prod^{2^j/\varepsilon}_{i=0}\mathbb{P}\Big(|U_{(m+1)\varepsilon/2^l+i\varepsilon/2^j}-U_{m\varepsilon/2^l+i\varepsilon/2^j}|\\ 
&\qquad\qquad\qquad\qquad\qquad\leq \sqrt{V(\varepsilon2^{-l})} 2^{\beta(l-j)/2}2^{j\beta}(1-2^{-\beta/2})/3\Big)\\ 
&= \prod^\infty_{j=0}\prod^\infty_{l=j+1} \prod^{2^{l-j}}_{m=1}\prod^{2^j/\varepsilon}_{i=0}\mathbb{P}\Big(\frac{|U_{(m+1)\varepsilon/2^l+i\varepsilon/2^j}-U_{m\varepsilon/2^l+i\varepsilon/2^j}|}{\sqrt{V_{\varepsilon/2^l}^{\,m\varepsilon/2^l+i\varepsilon/2^j}}} \\ 
&\qquad\qquad\qquad\qquad\qquad\leq \frac{\sqrt{V(\varepsilon2^{-l})}}{\sqrt{V_{\varepsilon/2^l}^{\,m\varepsilon/2^l+i\varepsilon/2^j}}} 2^{\beta(l-j)/2}2^{j\beta}(1-2^{-\beta/2})/3\Big)\\ 
&\geq \prod^\infty_{j=0}\prod^\infty_{l=j+1} \prod^{2^{l-j}}_{m=1}\prod^{2^j/\varepsilon}_{i=0} \mathbb{P}\Big(|Z|\leq c_* 2^{\beta(l+j)/2}(1-2^{-\beta/2})/3 \Big)\\ 
&\geq \prod^\infty_{j=0}\prod^\infty_{l=j+1} \prod^{2^{l-j}}_{m=1}\prod^{2^j/\varepsilon}_{i=0} \exp\Big(-\theta(\bar{c}_*)e^{-\bar{c}_*^22^{\beta(l+j)}/2}\Big)\\ 
&=\prod^\infty_{j=0}\prod^\infty_{l=j+1} \exp\Big(-\frac{2^l}{\varepsilon}\theta(\bar{c}_*)e^{-\bar{c}_*^22^{\beta(l+j)}/2}\Big)\\ 
&=\exp\Big(-\frac{1}{\varepsilon}\sum^\infty_{j=0}\sum^\infty_{l=j+1}2^l\theta(\bar{c}_*)e^{-\bar{c}_*^22^{\beta(l+j)}/2}\Big), 
\end{align*} 
where $\bar{c}_* = c_*(1-2^{-\beta/2})/3.$ 
Above all, 
\begin{equation*} 
\mathbb{P} \Big( \sup_{0\leq s\neq t\leq 1} \frac{|U_t-U_s|}{f(|t-s|)} \leq \frac{\sqrt{V_\varepsilon}}{f(\varepsilon)} \Big)\geq \exp\Big(-\frac{1}{\varepsilon}M_\beta \Big), 
\end{equation*} 
where 
\[ M_\beta = \sum^\infty_{j=0}\sum^\infty_{l=j+1}2^l\theta(\bar{c}_*)e^{-\bar{c}_*^22^{\beta(l+j)}/2} +  2\theta(c_*')\sum_{l=1}^\infty 2^l e^{-\tfrac{1}{2}(c_*'(4/3)^{\beta l})^2} +4\ln\!\Big(\frac{3}{c_*^1}\Big), \] 
which depends on $\beta$ and $c_*$. 
\end{proof}

\begin{theorem}
Let $ \sigma $ satisfy Assumption \((A)\).
Then for any $0 < \beta < H$, there exists a constant $M_\beta > 0$ such that
\begin{equation}
\mathbb{P}\Bigg(
\sup_{0 \leq s \neq t \leq 1} \frac{|U^H_t - U^H_s|}{|t-s|^\beta} \leq \varepsilon
\Bigg) 
\geq \exp\Big(\varepsilon^{-\frac{1}{H-\beta}} M_\beta \Big). \label{smbp}
\end{equation}
\end{theorem}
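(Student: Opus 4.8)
The plan is to apply the general small-ball estimate of Lemma~\ref{smp} directly to the centered Gaussian process $U^H_t = \int_0^t \sigma_s\,dB^H_s$, which is separable and satisfies $U^H_0=0$, and then to convert the resulting inequality into \eqref{smbp} by a single change of variables. Since Lemma~\ref{smp} already carries out the delicate time-discretization and the correlation estimate \eqref{Sidak}, the task here reduces to verifying its hypotheses with a suitable choice of the auxiliary concave majorant and of the scaling function $f$.

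First I would fix the reference function by setting $V_h := h^{2H}$, so that $\sqrt{V_h}=h^H$; since $0<H<1$ this function is concave on $[0,1]$ and vanishes at the origin. By Lemma~\ref{variance}, $V_h^t = \mathbb{E}[(U^H_{t+h}-U^H_t)^2] \leq h^{2H}/c_*$ for all admissible $t,h$, hence
\[
\frac{\sqrt{V_h}}{\sqrt{V_h^t}} = \frac{h^H}{\sqrt{V_h^t}} \geq \sqrt{c_*} > 0,
\]
which is precisely the comparison hypothesis of Lemma~\ref{smp} (with constant $\sqrt{c_*}$). Note that $h\mapsto h^H$ is used as an admissible concave majorant of the true incremental standard deviation, which is exactly what Lemma~\ref{smp} permits.

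Next I would choose the scaling function $f(h)=h^\beta$, which belongs to $C_0[0,1]$ and is nondecreasing and strictly positive on $(0,1]$, so that the left-hand side of Lemma~\ref{smp} becomes the $\beta$-Hölder seminorm. For the monotonicity requirement I take the parameter $\beta_0 := H-\beta$, positive exactly because $\beta<H$; then
\[
\frac{\sqrt{V_\varepsilon}}{\varepsilon^{\beta_0} f(\varepsilon)} = \frac{\varepsilon^H}{\varepsilon^{H-\beta}\,\varepsilon^\beta} = 1,
\]
a constant, hence trivially nondecreasing on $(0,1]$. All hypotheses are thus satisfied, and Lemma~\ref{smp} produces a constant $M_{\beta_0}>0$, depending only on $\beta_0$ and $c_*$ (and therefore on $\beta$ and $H$), such that
\[
\mathbb{P}\!\left(\sup_{0\leq s\neq t\leq 1}\frac{|U^H_t-U^H_s|}{|t-s|^\beta}\leq \frac{\sqrt{V_\varepsilon}}{f(\varepsilon)}\right)\geq \exp\!\left(-\frac{M_{\beta_0}}{\varepsilon}\right),
\]
where the radius is $\sqrt{V_\varepsilon}/f(\varepsilon)=\varepsilon^{H-\beta}$.

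Finally I would perform the substitution $\delta=\varepsilon^{H-\beta}$, so that $1/\varepsilon=\delta^{-1/(H-\beta)}$, and rename $\delta$ back to $\varepsilon$ to obtain
\[
\mathbb{P}\!\left(\sup_{0\leq s\neq t\leq 1}\frac{|U^H_t-U^H_s|}{|t-s|^\beta}\leq \varepsilon\right)\geq \exp\!\left(-M_\beta\,\varepsilon^{-\frac{1}{H-\beta}}\right),
\]
with $M_\beta:=M_{\beta_0}$, which is the assertion \eqref{smbp}. I do not anticipate a genuine obstacle, since the substantive analysis is encapsulated in Lemma~\ref{smp}; the only points requiring care are confirming that $h\mapsto h^H$ is an admissible concave majorant (legitimized by Lemma~\ref{variance}) and keeping the exponent $\beta_0=H-\beta$ strictly positive, which is where the constraint $\beta<H$ enters and determines the blow-up rate $\varepsilon^{-1/(H-\beta)}$.
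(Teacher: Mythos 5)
Your proposal is correct and follows essentially the same route as the paper: invoke Lemma~\ref{variance} to get the comparison constant, take $V_h = h^{2H}$ and $f(h)=h^\beta$ in Lemma~\ref{smp}, and conclude by the substitution $\varepsilon \mapsto \varepsilon^{1/(H-\beta)}$. Your write-up is in fact slightly more careful than the paper's, since you explicitly verify the concavity of $\sqrt{V_\cdot}$ and the monotonicity hypothesis (with $\beta_0 = H-\beta$), and you write the exponent with the correct negative sign $\exp\bigl(-M_\beta\,\varepsilon^{-1/(H-\beta)}\bigr)$, whereas the paper's displayed bound omits the minus sign.
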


\begin{proof}
By Lemma~\ref{variance}, we have
\[
\frac{h^{2H}}{V_h^t} \geq c^*.
\]
Let $f_t = t^\beta$ and $V_t = t^{2H}$. Applying Lemma~\ref{smp}, we obtain
\[
\mathbb{P}\Bigg(
\sup_{0 \leq s \neq t \leq 1} \frac{|U^H_t - U^H_s|}{|t-s|^\beta} \leq x^{H-\beta}
\Bigg) 
\geq \exp(x^{-1} M_\beta).
\]
Choosing $x = \varepsilon^{\frac{1}{H-\beta}}$ gives the desired result.
\end{proof}

 For the supremum norm, we also have a similar small-ball probability estimate.

\begin{lemma}
Let $\{U_t, t \in [0,1]\}$ be a centered Gaussian process with $U_0 = 0$. Assume there exist a function $V(h)$ such that
\begin{equation}
\mathbb{E}\big[(U_t - U_s)^2\big] \leq V(|t-s|), \quad \forall 0 \leq s,t \leq 1, \label{variance_bound}
\end{equation}
and constants $0 < c_1 \leq c_2 < 1$ such that
\[
c_1 V(2h \wedge 1) \leq V(h) \leq c_2 V(2h \wedge 1), \quad \forall 0 \leq h \leq 1.
\]
Then there exists a positive finite constant $K_1$, depending only on $c_1$ and $c_2$, such that
\begin{equation}
\mathbb{P}\Big( \sup_{0 \leq t \leq 1} |U_t| \leq \sqrt{V(\varepsilon)} \Big) \geq \exp(-K_1 / \varepsilon). \label{sup_gauss}
\end{equation}
\end{lemma}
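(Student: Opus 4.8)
The plan is to reuse the dyadic chaining scheme of \cite{KuelbsLiShao1995} already employed in Lemma~\ref{smp}, but organized around the increments of $U$ across consecutive dyadic points rather than its modulus of continuity; we may assume $V$ nondecreasing. First I would bound the supremum by a sum of dyadic increment maxima. Approximating each $t\in[0,1]$ by its dyadic truncations $t^{(n)}=\lfloor 2^n t\rfloor 2^{-n}$, so that $t^{(n)}-t^{(n-1)}\in\{0,2^{-n}\}$, the telescoping identity $U_t=\sum_{n\ge1}(U_{t^{(n)}}-U_{t^{(n-1)}})$ (valid for the continuous modification guaranteed by \eqref{variance_bound} together with the doubling hypothesis) gives
\[
\sup_{0\le t\le1}|U_t|\le\sum_{n=1}^{\infty}\max_{1\le i\le2^n}\bigl|U_{i2^{-n}}-U_{(i-1)2^{-n}}\bigr|.
\]
Consequently, if positive thresholds $x_n$ satisfy $\sum_{n\ge1}x_n\le\sqrt{V(\varepsilon)}$, the event $\bigcap_{n,i}\{|U_{i2^{-n}}-U_{(i-1)2^{-n}}|\le x_n\}$ forces $\sup_t|U_t|\le\sqrt{V(\varepsilon)}$, and the Gaussian correlation inequality \eqref{Sidak} (applied up to level $N$ and letting $N\to\infty$) yields
\[
\mathbb{P}\Bigl(\sup_{0\le t\le1}|U_t|\le\sqrt{V(\varepsilon)}\Bigr)\ge\prod_{n=1}^{\infty}\mathbb{P}\bigl(|Z|\le r_n\bigr)^{2^n},\qquad r_n:=x_n/\sqrt{V(2^{-n})},
\]
where $Z\sim N(0,1)$; here I use that each increment variance is at most $V(2^{-n})$ by \eqref{variance_bound}, so normalizing by the true standard deviation only enlarges the radius.

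Next I would choose the $x_n$ and split the product at the level $n_\varepsilon$ fixed by $1/\varepsilon\le2^{n_\varepsilon}\le2/\varepsilon$. Iterating the doubling bound gives $c_1^{\,m-\ell}V(2^{-\ell})\le V(2^{-m})\le c_2^{\,m-\ell}V(2^{-\ell})$ for $m\ge\ell$, hence the geometric control $\sqrt{V(2^{-n})}\le c_2^{(n-n_\varepsilon)/2}\sqrt{V(\varepsilon)}$ on the fine range $n>n_\varepsilon$ and $c_1^{(n_\varepsilon-n)/2}\le\sqrt{V(\varepsilon)}/\sqrt{V(2^{-n})}\le c_1^{-1/2}c_2^{(n_\varepsilon-n)/2}$ on the coarse range $n\le n_\varepsilon$. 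I would set $x_n=\lambda\,\rho^{\,n_\varepsilon-n}\sqrt{V(\varepsilon)}$ for $n\le n_\varepsilon$ and $x_n=s_{n-n_\varepsilon}\sqrt{V(2^{-n})}$ with $s_k=2\sqrt{(k+1)\log2}$ for $n>n_\varepsilon$. Each partial sum is then a fixed multiple of $\sqrt{V(\varepsilon)}$, so for suitable $\lambda,\rho\in(0,1)$ depending only on $c_1,c_2$ one obtains $\sum_n x_n\le C_0\sqrt{V(\varepsilon)}$ with $C_0=C_0(c_1,c_2)$.

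Then I would estimate the two resulting products. On the coarse range the comparison above makes $r_n\le\lambda c_1^{-1/2}(\rho\sqrt{c_2})^{\,n_\varepsilon-n}\le1$ once $\lambda\le c_1^{1/2}$, so \eqref{smpexp1} gives $\mathbb{P}(|Z|\le r_n)\ge r_n/2\ge\tfrac{\lambda}{2}(\rho\sqrt{c_1})^{\,n_\varepsilon-n}$; taking logarithms and using $\sum_{n\le n_\varepsilon}2^n\le4/\varepsilon$ together with $\sum_{n\le n_\varepsilon}2^n(n_\varepsilon-n)\le4/\varepsilon$ produces a bound $\ge-C_1/\varepsilon$. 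On the fine range $r_n=s_{n-n_\varepsilon}\ge1$, so \eqref{smpexp2} with $s=1$ gives $\mathbb{P}(|Z|\le r_n)\ge\exp(-\theta e^{-r_n^2/2})$ with $\theta=(1-e^{-1/2})^{-1}$; since $e^{-s_k^2/2}=4^{-(k+1)}$, the calibration yields $\sum_{n>n_\varepsilon}2^n e^{-r_n^2/2}=2^{n_\varepsilon}\sum_{k\ge1}2^{-k-2}\le2/\varepsilon$, whence this product is $\ge-\theta/\varepsilon$ in log scale. Multiplying the two bounds gives $\mathbb{P}(\sup_t|U_t|\le C_0\sqrt{V(\varepsilon)})\ge\exp(-K'/\varepsilon)$ with $K'=K'(c_1,c_2)$.

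Finally I would remove the spurious constant $C_0$ by a doubling rescaling: fixing an integer $j\ge 2\log C_0/\log(1/c_2)$ and applying the last estimate with $\varepsilon$ replaced by $2^{-j}\varepsilon$, the bound $V(2^{-j}\varepsilon)\le c_2^{\,j}V(\varepsilon)\le C_0^{-2}V(\varepsilon)$ gives $C_0\sqrt{V(2^{-j}\varepsilon)}\le\sqrt{V(\varepsilon)}$, so that $\mathbb{P}(\sup_t|U_t|\le\sqrt{V(\varepsilon)})\ge\mathbb{P}(\sup_t|U_t|\le C_0\sqrt{V(2^{-j}\varepsilon)})\ge\exp(-K'2^{j}/\varepsilon)$, which is \eqref{sup_gauss} with $K_1=K'2^{j}$ depending only on $c_1,c_2$. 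The main obstacle is the simultaneous calibration of the thresholds $\{x_n\}$: they must sum to at most $\sqrt{V(\varepsilon)}$ so that the chaining controls the supremum, yet keep $\prod_n\mathbb{P}(|Z|\le r_n)^{2^n}$ above $e^{-K/\varepsilon}$; it is precisely the two-sided doubling bound on $V$ that renders these requirements compatible, with the unavoidable constant $C_0$ absorbed in the final rescaling.
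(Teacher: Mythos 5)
Your proposal is sound, but note how it relates to the paper: the paper offers \emph{no} proof of this lemma at all --- it is stated as a known result (in the spirit of \cite{KuelbsLiShao1995}) and immediately applied to get the sup-norm small-ball estimate for $U^H$. So your argument supplies a proof where the paper has none. In substance you are running the same machinery the paper uses for the neighbouring H\"older-seminorm estimate, Lemma~\ref{smp} --- dyadic discretization, the Gaussian correlation inequality \eqref{Sidak}, the two standard-normal lower bounds \eqref{smpexp1}--\eqref{smpexp2}, and the split at $2^{n_\varepsilon}\asymp 1/\varepsilon$ --- but with two ingredients specific to the sup-norm case: the telescoping chaining bound $\sup_t|U_t|\le\sum_{n\ge1}\max_i|U_{i2^{-n}}-U_{(i-1)2^{-n}}|$, which replaces the modulus-of-continuity bookkeeping of Lemma~\ref{smp} and makes this case simpler, and the final doubling rescaling $\varepsilon\mapsto2^{-j}\varepsilon$, which uses $V(2^{-j}\varepsilon)\le c_2^{\,j}V(\varepsilon)$ to absorb the spurious constant $C_0$. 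The calibration itself checks out: on the coarse range, iterated doubling gives $r_n\le\lambda c_1^{-1/2}(\rho\sqrt{c_2})^{\,n_\varepsilon-n}\le1$ and $r_n\ge\lambda(\rho\sqrt{c_1})^{\,n_\varepsilon-n}$, and both $\sum_{n\le n_\varepsilon}2^n$ and $\sum_{n\le n_\varepsilon}2^n(n_\varepsilon-n)$ are $O(1/\varepsilon)$; on the fine range $e^{-s_k^2/2}=4^{-(k+1)}$ makes $\sum_{n>n_\varepsilon}2^ne^{-r_n^2/2}=O(1/\varepsilon)$, so both log-products are bounded below by $-C/\varepsilon$ with $C=C(c_1,c_2)$.

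Two steps should be made explicit rather than asserted. First, ``we may assume $V$ nondecreasing'' is not automatic, since the hypotheses only constrain $V$ across octaves: justify it by passing to the envelope $\tilde V(h)=\sup_{h'\le h}V(h')$, checking that $\tilde V$ still satisfies \eqref{variance_bound} and the same two-sided doubling inequality, and noting that iterating the upper doubling bound until the argument leaves $[0,\varepsilon]$ yields $\tilde V(\varepsilon)\le c_1^{-1}V(\varepsilon)$; the resulting extra factor $c_1^{-1/2}$ in the radius is then swallowed by your final rescaling, so nothing is lost. Second, the telescoping identity and the reduction of $\sup_{[0,1]}$ to dyadic points require a continuous (or at least separable) version of $U$; your parenthetical claim is correct --- doubling with $c_2<1$ forces $V(h)\le V(1)\,h^{\gamma}$ with $\gamma=\log_2(1/c_2)>0$, and since $U_t-U_s$ is Gaussian, $\mathbb{E}|U_t-U_s|^{2m}\le C_m V(1)^m|t-s|^{\gamma m}$, so Kolmogorov's criterion applies for $m>1/\gamma$ --- but it deserves a sentence, together with the observation that the probabilities estimated via \eqref{Sidak} involve only finitely many dyadic increments and are therefore modification-independent, which is what licenses transferring the bound to the supremum and then letting the number of levels tend to infinity.
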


Using the above lemma, the small-ball probability estimate for $U_t$ can be obtained straightforwardly.

\begin{theorem}
Let $\sigma$ satisfy Assumption \((A)\).
Then there exists a positive constant $c_H$ such that
\begin{equation}
\mathbb{P}\Bigg(
\sup_{0 \leq t \leq 1} |U^H_t| \leq \varepsilon
\Bigg) \geq \exp\Big( \varepsilon^{-\frac{1}{H}} c_H \Big).
\end{equation}
\end{theorem}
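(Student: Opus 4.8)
The plan is to apply the Gaussian supremum small-ball lemma stated immediately above to the process $U^H_t = \int_0^t \sigma_s\,dB^H_s$. Since $U^H$ is a Wiener integral of a deterministic function against $B^H$, it is a centered Gaussian process with $U^H_0 = 0$, so the standing hypotheses of that lemma are met; the only substantive task is to exhibit a comparison function $V$ controlling the incremental variance and to verify its doubling property.

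The key input is the upper estimate $\mathbb{E}\big[(U^H_t - U^H_s)^2\big] \leq C|t-s|^{2H}$ for all $s,t \in [0,1]$. This is precisely the upper bound already derived inside the proof of Lemma~\ref{variance}: for $H > 1/2$ it follows from the isometry of Lemma~\ref{isometry} together with the bound $\int\!\!\int |r-s|^{2H-2}\,dr\,ds \leq C h^{2H}$, while for $H < 1/2$ it is the chain of inequalities there ending in $V_h^t \leq C h^{2H}$; the case $H = 1/2$ is the classical Brownian bound $\mathbb{E}\big[(U_t-U_s)^2\big] = \int_s^t \sigma_u^2\,du \leq M^2|t-s|$. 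I would therefore set $V(h) = C h^{2H}$, so that condition \eqref{variance_bound} holds directly.

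It then remains to check the doubling hypothesis $c_1 V(2h\wedge 1) \leq V(h) \leq c_2 V(2h\wedge 1)$. For $V(h) = C h^{2H}$ and $0 \leq h \leq 1/2$ one has $V(2h) = 2^{2H} V(h)$ exactly, so the two-sided bound holds with $c_1 = c_2 = 2^{-2H} \in (0,1)$, which is all the lemma needs, since only the small-scale behavior of $V$ governs the exponential rate. Applying the lemma yields a constant $K_1 > 0$ with
\[
\mathbb{P}\Big(\sup_{0\leq t\leq 1}|U^H_t| \leq \sqrt{V(\delta)}\Big) = \mathbb{P}\Big(\sup_{0\leq t\leq 1}|U^H_t| \leq \sqrt{C}\,\delta^{H}\Big) \geq \exp(-K_1/\delta).
\]
Choosing $\delta = (\varepsilon/\sqrt{C})^{1/H}$ turns the radius into $\varepsilon$ and the exponent into $-K_1 C^{1/(2H)}\varepsilon^{-1/H}$, giving the claim with $c_H = -K_1 C^{1/(2H)}$.

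Since the bulk of the work—the sharp incremental-variance estimate—is inherited from Lemma~\ref{variance}, I expect no serious obstacle in the main argument. The one point requiring genuine care is the faithful verification of the two-sided doubling condition near $h = 1$: the pure power law $h^{2H}$ fails the strict inequality $c_2 < 1$ at the right endpoint, so one must reconcile this with the $\wedge 1$ truncation in the lemma's hypothesis, observing that the macroscopic scales $h$ close to $1$ contribute only a bounded factor and hence do not alter the order $\varepsilon^{-1/H}$ of the estimate.
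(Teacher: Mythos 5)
Your proposal is correct and takes essentially the same route as the paper: the paper gives no written proof beyond invoking the preceding Gaussian supremum small-ball lemma, which is exactly what you do, feeding in the incremental variance bound $V_h^t \leq C h^{2H}$ already established inside the proof of Lemma~\ref{variance} and rescaling $\delta = (\varepsilon/\sqrt{C})^{1/H}$. The two points you flag are in fact defects of the paper's own statements rather than of your argument: the doubling condition as written (with the $2h \wedge 1$ truncation) is unsatisfiable by any nonvanishing $V$ at $h=1$ and is evidently meant to hold only for $0 \leq h \leq 1/2$, where the power law satisfies it exactly with $c_1 = c_2 = 2^{-2H}$; and the constant $c_H$ must indeed be negative (as in your $c_H = -K_1 C^{1/(2H)}$) for the bound to be a genuine probability estimate, the paper's word ``positive'' being a slip consistent with the same sign sloppiness in \eqref{smbp}.
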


Using the small ball estimate, we can then show that both the H\"older norm 
and the supremum norm of the  are measurable functions.

                  Below we define the operator 
\begin{equation*}
	B^H(h)(s) = \int_0^s K^H(s,u) h'_u  du,
\end{equation*}
and use it to define semi-norms on $H^1$:
\begin{align*}
	N^H(h) &= \sup_{0 \leq t \leq 1} \left| \int_0^t \sigma_s  dB^H(h)(s) \right|, \\
	N^H_\beta(h) &= \sup_{0 \leq t \leq 1} \frac{\left| \int_0^t \sigma_s  dB^H(h)(s) - \int_0^r \sigma_s  dB^H(h)(s) \right|}{|t - r|^\beta}.
\end{align*}
The following theorem will show that $N^H(h)$ and $N^H_\beta(h)$ are measurable norms.  

\begin{theorem}  \label{measurable norm}
    The norms \( N^H \) and \( N^H_\beta \) are norms on \( H^1 \) and are measurable.
The random variables corresponding to these norms are given by:
\[
\widetilde{N}^H = \left\| \int_0^{\cdot} \sigma_s \, dB^H_s \right\|_{\infty} \quad \text{and} \quad \widetilde{N}^H_\beta = \left\| \int_0^{\cdot} \sigma_s \, dB^H_s \right\|_{\beta}.
\]
\end{theorem}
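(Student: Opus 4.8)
The plan is to treat $N^H$ and $N^H_\beta$ simultaneously by realizing each as a nondecreasing limit of finitely based seminorms and then invoking Lemma~\ref{measurablenorm}. Throughout I write $F_h(t):=\int_0^t\sigma_s\,dB^H(h)(s)$, so that $N^H(h)=\|F_h\|_\infty$ and $N^H_\beta(h)=[F_h]_\beta$, and recall $B^H(h)=K_H h'$. First I would check finiteness and the norm property on $H^1$. For $h\in H^1$ we have $h'\in L^2$, so $B^H(h)=K_H h'\in I^{H+1/2}_{0^+}(L^2)$ is $H$-H\"older by the embedding recalled before~\eqref{Weyl}; since $\sigma\in C^1$ and $\gamma+H>1$, the Young integral $F_h$ inherits this regularity, whence $\|F_h\|_\infty,[F_h]_\beta<\infty$ and both seminorms are finite on $H^1$. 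Subadditivity and homogeneity follow from the linearity of $h\mapsto F_h$, so only definiteness remains: if $N^H(h)=0$ then $F_h\equiv 0$, and since $\sigma_s\ge m>0$ we may invert to get $B^H(h)=K_H h'\equiv 0$; injectivity of $K_H$ together with $h(0)=0$ forces $h=0$. For $N^H_\beta$, the vanishing of the seminorm makes $F_h$ constant, hence $\equiv 0$ as $F_h(0)=0$, and the same conclusion follows.

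Next I would fix a countable dense set $\{t_k\}_{k\ge 1}\subset[0,1]$ and define
\[
N^H_n(h)=\max_{1\le k\le n}\bigl|F_h(t_k)\bigr|,
\qquad
N^H_{\beta,n}(h)=\max_{1\le j\ne k\le n}\frac{\bigl|F_h(t_j)-F_h(t_k)\bigr|}{|t_j-t_k|^\beta}.
\]
Each depends on $h$ only through the finitely many functionals $L_k(h):=F_h(t_k)$, so it factors through a finite-dimensional subspace of $H^1$ and is therefore a measurable seminorm. To identify the associated random variable, I would use the representation $\int_0^{t_k}\sigma_s\,dB^H_s=\int_0^1\bigl(K^*_H(\sigma\,\mathbf 1_{[0,t_k]})\bigr)(s)\,dW_s$ together with the isometry of Lemma~\ref{isometry}: this shows that $L_k$ is represented in $H^1$ by the element with derivative $K^*_H(\sigma\,\mathbf 1_{[0,t_k]})\in L^2$ (the membership following from the second–moment bound of Lemma~\ref{variance}), and that the Gaussian variable attached to it is precisely $U^H_{t_k}:=\int_0^{t_k}\sigma_s\,dB^H_s$. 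Hence $N^H_n$ and $N^H_{\beta,n}$ are measurable seminorms whose associated random variables are $\max_{k\le n}|U^H_{t_k}|$ and $\max_{j\ne k\le n}|U^H_{t_j}-U^H_{t_k}|/|t_j-t_k|^\beta$, the positivity $\mathbb P(\,\cdot\,\le\varepsilon)>0$ being automatic for nondegenerate Gaussian maxima.

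The families $\{N^H_n\}$ and $\{N^H_{\beta,n}\}$ are manifestly nondecreasing, and since $t\mapsto F_h(t)$ is continuous the density of $\{t_k\}$ gives $N^H_n(h)\uparrow N^H(h)$ and $N^H_{\beta,n}(h)\uparrow N^H_\beta(h)$ for every $h\in H^1$. Along $W$, the almost sure H\"older continuity of the paths of $U^H$ (noted in the remark after Lemma~\ref{young2}) yields $\max_{k\le n}|U^H_{t_k}|\uparrow\|U^H\|_\infty$ and the H\"older maxima increasing to $[U^H]_\beta$, which coincides with $\|U^H\|_\beta$ because $U^H_0=0$; the convergence holds almost surely and hence in probability. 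The nondegeneracy hypothesis $\mathbb P(\widetilde N\le\varepsilon)>0$ for all $\varepsilon>0$ is exactly the content of the small-ball estimates established above, namely the supremum-norm estimate for $\|U^H\|_\infty$ and the H\"older-norm estimate~\eqref{smbp} for $[U^H]_\beta$. Applying Lemma~\ref{measurablenorm} to each monotone family then shows that $N^H$ and $N^H_\beta$ are measurable seminorms, and since they are already norms they are measurable norms, with associated random variables $\widetilde N^H=\|\int_0^\cdot\sigma_s\,dB^H_s\|_\infty$ and $\widetilde N^H_\beta=\|\int_0^\cdot\sigma_s\,dB^H_s\|_\beta$.

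The step I expect to be most delicate is the identification of the Gaussian variable attached to each $L_k$ with the stochastic integral $U^H_{t_k}$, together with the verification that $L_k$ is a genuine continuous functional on $H^1$. Because the relevant representative has derivative $K^*_H(\sigma\,\mathbf 1_{[0,t_k]})$ and $K^*_H$ involves the fractional operator $D^\alpha_{1^-}$ (for $H<\tfrac12$) applied to the product of the smooth weight $\sigma$ with a \emph{discontinuous} indicator, one must argue its $L^2$–membership through the isometry and the second–moment control of Lemma~\ref{variance} rather than by naive differentiation. Equivalently, one must control the finite-rank approximation $B^H(Q_mW)=\sum_{i\le m}W(h_i)\,K_Hh_i'$ in a H\"older topology of order exceeding $1-\gamma$, so that the pathwise (Young) integral against $\sigma$ passes to the limit uniformly in $m$ via Lemma~\ref{young2}. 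This rests squarely on the smoothing property $K_H:L^2\to I^{H+1/2}_{0^+}(L^2)$ and the associated H\"older embedding; once this and the continuity of each $L_k$ are secured, the remainder reduces to the monotone-convergence machinery of Lemma~\ref{measurablenorm} and the already-established small-ball bounds, which are routine.
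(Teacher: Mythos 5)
Your proposal is correct and follows essentially the same route as the paper: both realize $N^H$ and $N^H_\beta$ as monotone limits of finitely based seminorms over a countable set of time points, identify the attached Gaussian variables with $\int_0^{t_k}\sigma_s\,dB^H_s$ through the $K_H^*$ representation (the paper does this via the explicit $L^2$ computation in \eqref{seminorm}, you via the Riesz representative with derivative $K_H^*(\sigma\,\mathbf 1_{[0,t_k]})$), and then conclude from Lemma~\ref{measurablenorm} combined with the small-ball probability estimates. The remaining differences are cosmetic: you prove definiteness by inverting the Young integral using $\sigma \ge m > 0$, whereas the paper runs a contraction-on-small-intervals argument based on \eqref{youngint}, and the paper uses dyadic points where you use an arbitrary dense sequence.
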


 \begin{proof}
 	We first verify that $N^H(h)$ and $N^H_\beta(h)$ are norms on $H^1$. 
For this purpose, it suffices to verify that $N^H(h)=0$ and $N^H_\beta(h)=0$ imply $h=0$.

If 
\[
\int_0^t \sigma_s \, dB^H(h)(s)=0, \quad 0 \le t \le 1,
\]  
then, by \eqref{youngint}, we have  
\begin{align*}
   \bigl| \sigma_s ( B^H(h)(t) - B^H(h)(s) ) \bigr|
   &\le C_{\alpha,\beta} \, [\sigma]_\alpha \, [B^H(h)]_\beta \, |t-s|^{\alpha+\beta}.
\end{align*}
Consequently,
\begin{align*}
   \frac{|B^H(h)(t) - B^H(h)(s)|}{|t-s|^\beta}
   &\le \frac{1}{m} \, C_{\alpha,\beta} \, [\sigma]_\alpha \, [B^H(h)]_\beta \, |t-s|^\alpha.
\end{align*}
Hence, for $0 \le s,t \le T < 1$,  
\begin{align*}
   [B^H(h)]_{\beta,T} \le \frac{1}{m} \, C_{\alpha,\beta} \, [\sigma]_\alpha \, T^\alpha \, [B^H(h)]_{\beta,T}.
\end{align*}
Choosing $T$ sufficiently small so that $\frac{1}{m} C_{\alpha,\beta} [\sigma]_\alpha T^\alpha < 1$, we deduce $[B^H(h)]_{\beta,T}=0$. 
Together with $B^H(h)(0)=0$, this implies $B^H(h)=0$ on $[0,T]$. Iterating this argument yields $B^H(h)=0$ on $[0,1]$.

Next, we prove $N^H(h)$ is a measurable norm on $H^1$. 
For $0 \le t \le 1$, we have
\begin{align}
\notag
&\mathbb{E}\Bigg(\int_0^t \sigma_s \, dB^H(Q_n W)(s) - \int_0^t \sigma_s \, dB^H_s \Bigg)^2 \\
=& \sum_{i=1}^n (\int_0^1 h_i' \, ds)^2 \mathbb{E} \Big( \int_0^t \sigma_s \, d\int_0^s K^H(s,u) h_i'(u) \, du \Big)^2 \\
\notag
& - 2 \sum_{i=1}^n \left( \int_0^t K_H^* \sigma_s h_i' \, ds \right) \mathbb{E} \Big( \int_0^t \sigma_s \, d\int_0^s K^H(s,u) h_i'(u) \, du \Big) \\
\notag
& + \int_0^t (K_H^* \sigma)^2(s) \, ds \\
\to & 0, \quad n \to \infty. \label{seminorm}
\end{align}

We define a sequence of semi-norms $\{N^H_m\}$ on $H^1$ by
\[
N^H_m(h) := \sup_{1 \le j \le 2^m} \left| \int_0^{j/2^m} \sigma_s \, dB^H(h)(s) \right|.
\]

According to \eqref{seminorm}, we obtain
\[
N^H_m(Q_n W) := \sup_{1 \le j \le 2^m} \left| \int_0^{j/2^m} \sigma_s \, dB^H_s \right|.
\]

Moreover, it is clear that
\[
\mathbb{P}\big( \tilde{N}^H_m \le \varepsilon \big) > 0,
\]
since $\int_0^t \sigma_s \, dB^H_s$ is a centered Gaussian process. 
Consequently, $\tilde{N}^H_m \xrightarrow{\mathbb{P}} \tilde{N}$, and
\[
\mathbb{P}(\tilde{N} \le \varepsilon) \ge \exp\Big( -c \, \varepsilon^{-1/H} \Big) > 0.
\]
Therefore, by Lemma~\ref{measurablenorm}, $N^H$ defines a measurable norm on $H^1$.

 \end{proof}
	
	\subsection{Technical theorems}
	In this section, we will introduce several commonly utilized technical lemmas and theorems.
\begin{lemma}[Girsanov's Theorem, {\cite[Theorem~8.6]{Oksendal2014}}]\label{lem:girsanov}
Let $(\Omega,\mathcal{F},\{\mathcal{F}_t\}_{t\ge0},\mathbb{P})$ be a filtered probability space, and let 
$W=\{W_t\}_{t\ge0}$ be a $d$-dimensional Brownian motion. 
Suppose $u=\{u_t\}_{t\ge0}$ is an $\{\mathcal{F}_t\}$-adapted process such that 
\[
\mathbb{E}\left[\exp\left(\tfrac{1}{2}\int_0^T |u_s|^2\,ds\right)\right] < \infty
\]
for every $T>0$. Define
\[
Z_t = \exp\left(-\int_0^t u_s \cdot dW_s - \tfrac{1}{2}\int_0^t |u_s|^2\,ds\right), 
\qquad t \in [0,T].
\]
Then $\{Z_t\}$ is a martingale under $\mathbb{P}$. If we define a new probability measure 
$\mathbb{Q}$ on $(\Omega,\mathcal{F}_T)$ by
\[
\frac{d\mathbb{Q}}{d\mathbb{P}}\Big|_{\mathcal{F}_T} = Z_T,
\]
then the process
\[
\widetilde{W}_t = W_t + \int_0^t u_s\,ds, \qquad t \in [0,T],
\]
is a $d$-dimensional Brownian motion with respect to $\mathbb{Q}$.
\end{lemma}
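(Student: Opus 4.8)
The plan is to establish the three assertions in turn — that $\{Z_t\}$ is a $\mathbb{P}$-martingale, that $\mathbb{Q}$ is a genuine probability measure, and that $\widetilde{W}$ is a $\mathbb{Q}$-Brownian motion — using It\^o calculus together with L\'evy's characterization. First I would apply It\^o's formula to the exponential $Z_t$ to obtain the stochastic differential
\[
dZ_t = -Z_t\, u_t \cdot dW_t,
\]
which shows immediately that $Z$ is a continuous local martingale with $Z_0 = 1$. The exponential integrability hypothesis $\mathbb{E}[\exp(\tfrac{1}{2}\int_0^T |u_s|^2\,ds)] < \infty$ is precisely Novikov's condition, and I would invoke it to upgrade $Z$ from a local to a true martingale on $[0,T]$; this yields $\mathbb{E}[Z_T] = 1$. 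Since $Z_T > 0$ almost surely, the prescription $d\mathbb{Q}/d\mathbb{P}\,|_{\mathcal{F}_T} = Z_T$ then defines a probability measure $\mathbb{Q}$ equivalent to $\mathbb{P}$.

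Next I would show $\widetilde{W}$ is a $\mathbb{Q}$-Brownian motion by verifying the hypotheses of L\'evy's characterization. The key tool is the abstract Bayes rule: a process $M$ is a $\mathbb{Q}$-martingale if and only if $MZ$ is a $\mathbb{P}$-martingale, which follows from the martingale property of $Z$ and the tower property. For each coordinate I would compute $d(\widetilde{W}^i_t Z_t)$ by the It\^o product rule, using $d\widetilde{W}^i_t = dW^i_t + u^i_t\,dt$ together with $d\langle W^i, Z\rangle_t = -Z_t u^i_t\,dt$; the finite-variation drift terms cancel exactly, leaving
\[
d(\widetilde{W}^i_t Z_t) = \widetilde{W}^i_t\, dZ_t + Z_t\, dW^i_t,
\]
which has no $dt$ term and is therefore a local martingale. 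Hence each product $\widetilde{W}^i Z$ is a $\mathbb{P}$-local martingale, and the Bayes rule makes $\widetilde{W}^i$ a $\mathbb{Q}$-local martingale.

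Finally, the quadratic covariation $\langle \widetilde{W}^i, \widetilde{W}^j\rangle_t = \langle W^i, W^j\rangle_t = \delta_{ij}\,t$ is unchanged both by the absolutely continuous change of measure and by the addition of the finite-variation drift $\int_0^\cdot u_s\,ds$. L\'evy's characterization then identifies $\widetilde{W}$ as a standard $d$-dimensional Brownian motion under $\mathbb{Q}$. I expect the main obstacle to be the passage from local to true martingale: the drift-cancellation computation is routine once the product rule is set up, but the integrability required to promote the relevant local martingales to genuine martingales — both for $Z$ itself and, if one argues directly rather than via the exponential-martingale trick, for the products $\widetilde{W}^i Z$ — rests entirely on the Novikov condition, so it is this exponential-integrability estimate that carries the real analytic content.
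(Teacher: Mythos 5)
Your proof is correct, but note that the paper does not prove this lemma at all: it is quoted verbatim as a known result from \cite[Theorem~8.6]{Oksendal2014}, so there is no internal proof to compare against. Your argument --- It\^o's formula to get $dZ_t=-Z_t u_t\cdot dW_t$, Novikov's condition to upgrade $Z$ to a true martingale, the It\^o product rule showing the drift cancels in $d(\widetilde{W}^i_tZ_t)$, and L\'evy's characterization applied under $\mathbb{Q}$ via the Bayes rule for local martingales --- is precisely the standard textbook proof contained in the cited reference, and all the steps (including the measure-invariance of the quadratic covariation) are sound.
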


	\begin{lemma}  \label{nov1}
	    For a non-negative random variable $Y$, its exponential moment can be calculated using its tail probabilities:

\[
\mathbb{E}[e^{\lambda Y}] = 1 + \lambda \int_0^\infty e^{\lambda s} \mathbb{P}(Y > s)  \, ds, \quad \forall \ \lambda>0.
\]
	\end{lemma}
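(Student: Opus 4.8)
The plan is to reduce the identity to an application of the fundamental theorem of calculus followed by Tonelli's theorem. The starting observation is that for any fixed realization $Y \ge 0$ one has the elementary identity
\[
e^{\lambda Y} = 1 + \lambda \int_0^Y e^{\lambda s}\,ds,
\]
which is just $\int_0^Y \tfrac{d}{ds}e^{\lambda s}\,ds = e^{\lambda Y}-1$. The next step I would take is to rewrite the integral over the random interval $[0,Y]$ as an integral over the fixed half-line $[0,\infty)$ by inserting an indicator:
\[
\lambda \int_0^Y e^{\lambda s}\,ds = \lambda \int_0^\infty e^{\lambda s}\,\mathbf{1}_{\{s < Y\}}\,ds.
\]

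With this representation in hand, I would take expectations on both sides. Since the integrand $e^{\lambda s}\mathbf{1}_{\{s<Y\}}$ is non-negative and jointly measurable in $(s,\omega)$, Tonelli's theorem permits the interchange of $\mathbb{E}$ and $\int_0^\infty ds$ without any integrability hypothesis:
\[
\mathbb{E}\!\left[e^{\lambda Y}\right]
= 1 + \lambda\,\mathbb{E}\!\left[\int_0^\infty e^{\lambda s}\mathbf{1}_{\{s<Y\}}\,ds\right]
= 1 + \lambda \int_0^\infty e^{\lambda s}\,\mathbb{E}\!\left[\mathbf{1}_{\{s<Y\}}\right]ds.
\]
Finally I would identify $\mathbb{E}[\mathbf{1}_{\{s<Y\}}] = \mathbb{P}(Y > s)$, which yields exactly the claimed formula. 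Note that the equality holds in $[0,+\infty]$ regardless of whether $\mathbb{E}[e^{\lambda Y}]$ is finite, so no separate finiteness argument is needed.

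The argument is essentially routine, and the only point that requires care is the justification for exchanging expectation and the Lebesgue integral over $s$; here the non-negativity of the integrand is what makes this step unconditional, so Tonelli rather than Fubini is the appropriate tool and no dominating function must be exhibited. I do not anticipate any substantive obstacle beyond stating this interchange cleanly.
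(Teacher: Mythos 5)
Your proof is correct. The paper itself states this lemma without proof, treating it as a standard fact (it is the classical tail-probability/layer-cake representation), and your argument—pointwise identity $e^{\lambda Y} = 1 + \lambda \int_0^\infty e^{\lambda s}\mathbf{1}_{\{s<Y\}}\,ds$ followed by Tonelli—is exactly the standard justification, including the correct observation that non-negativity makes the interchange unconditional so the identity holds in $[0,+\infty]$.
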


  \begin{lemma}[{\cite[Theorem~5.4.3]{MarcusRosen2006}}]\label{nov2}
        Let $\{Y_t\}_{t\in T}$ be a centered Gaussian process defined on   a compact index set $T$.  Assume that the process is almost surely bounded. Let $\sigma_T^2 = \sup_{t \in T} \mathrm{Var}(Y_t)$.
Then $\mathbb{E}[\|Y\|] < \infty$. Moreover, for all $u > 0$,
\[
\mathbb{P}\left( \|Y\| > \mathbb{E}[\|Y\|] + u \right) \leq 2 \exp\left( -\frac{u^2}{2\sigma_T^2} \right).
\]
    \end{lemma}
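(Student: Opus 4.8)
The plan is to take the finite-dimensional Gaussian concentration inequality as the core tool and to lift it to the process supremum by a standard approximation. Since $T$ is compact and the process is almost surely bounded, I would fix a countable dense subset $\{t_1,t_2,\dots\}\subset T$ and use separability of $Y$ to identify $\|Y\|=\sup_{k}|Y_{t_k}|$ almost surely. The finiteness $\mathbb{E}[\|Y\|]<\infty$ follows from almost sure boundedness together with the integrability of Gaussian suprema (Fernique's theorem), and it can also be read off a posteriori from the uniform-in-$n$ tail bounds on the finite maxima established below.

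The heart of the argument is the following reduction. Fix $n$ and set $S_n=\max_{1\le k\le n}Y_{t_k}$. Writing the centered Gaussian vector $(Y_{t_1},\dots,Y_{t_n})=AZ$ with $Z\sim N(0,I_n)$ and $A$ having rows $a_1^\top,\dots,a_n^\top$, we have $|a_k|^2=\mathrm{Var}(Y_{t_k})\le\sigma_T^2$. Hence the map $F(z)=\max_{1\le k\le n}(a_k\cdot z)$ satisfies
\[
|F(z)-F(z')|\le\max_{k}\,|a_k\cdot(z-z')|\le\sigma_T\,|z-z'|,
\]
so $F$ is $\sigma_T$-Lipschitz. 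The Gaussian concentration inequality for Lipschitz functions (the Borell--Sudakov--Tsirelson inequality, which underlies \cite[Theorem~5.4.3]{MarcusRosen2006}) then gives
\[
\mathbb{P}\big(S_n>\mathbb{E}[S_n]+u\big)=\mathbb{P}\big(F(Z)>\mathbb{E}[F(Z)]+u\big)\le\exp\!\Big(-\frac{u^2}{2\sigma_T^2}\Big),\qquad u>0.
\]

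Next I would pass to the limit. By separability $S_n\uparrow\sup_k Y_{t_k}$ almost surely, and by monotone convergence $\mathbb{E}[S_n]\uparrow\mathbb{E}[\sup_k Y_{t_k}]\le\mathbb{E}[\|Y\|]<\infty$. Letting $n\to\infty$ in the displayed tail bound yields the one-sided estimate $\mathbb{P}(\sup_k Y_{t_k}>\mathbb{E}[\sup_k Y_{t_k}]+u)\le e^{-u^2/(2\sigma_T^2)}$, and since $\mathbb{E}[\sup_k Y_{t_k}]\le\mathbb{E}[\|Y\|]$ the same bound holds with $\mathbb{E}[\|Y\|]$ in place of $\mathbb{E}[\sup_k Y_{t_k}]$. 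Applying this simultaneously to $Y$ and $-Y$ (both have the same variance profile, hence the same $\sigma_T$) and using $\|Y\|=\max\big(\sup_k Y_{t_k},\ \sup_k(-Y_{t_k})\big)$ with a union bound produces
\[
\mathbb{P}\big(\|Y\|>\mathbb{E}[\|Y\|]+u\big)\le2\exp\!\Big(-\frac{u^2}{2\sigma_T^2}\Big),
\]
which is exactly the asserted estimate, the factor $2$ arising from the symmetrization step.

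The step I expect to be the genuine obstacle is the Gaussian concentration inequality itself; everything else is routine approximation and measurability. A self-contained derivation of that inequality proceeds either through the Gaussian isoperimetric inequality or through the logarithmic Sobolev inequality for $N(0,I_n)$ combined with the Herbst argument, the latter giving $\mathbb{E}[e^{\lambda(F-\mathbb{E}F)}]\le e^{\lambda^2\sigma_T^2/2}$ and then the tail bound via Markov's inequality after optimizing in $\lambda$. Since we invoke \cite[Theorem~5.4.3]{MarcusRosen2006}, I would treat this inequality as a black box and devote the write-up to the finite-dimensional reduction, the limiting argument, and the symmetrization that yields the factor $2$.
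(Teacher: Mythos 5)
The paper offers no proof of this lemma: it is imported verbatim as \cite[Theorem~5.4.3]{MarcusRosen2006} (the Borell--TIS inequality) and used as a black box, so there is no internal argument to compare yours against. Your proposal is a correct rendition of the standard proof of that theorem: the finite-dimensional reduction (writing $(Y_{t_1},\dots,Y_{t_n})=AZ$ and noting that $z\mapsto\max_k a_k\cdot z$ is Lipschitz with constant $\max_k|a_k|\le\sigma_T$), the monotone passage along a countable dense subset, and the symmetrization over $Y$ and $-Y$ that produces the factor $2$ are all sound. Two remarks. First, the genuine content is delegated to Gaussian concentration for Lipschitz functions (Borell--Sudakov--Tsirelson), which is of essentially the same depth as the cited theorem itself; that is legitimate given that the paper also treats the result as external, but it makes your write-up a reduction rather than a self-contained derivation. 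Second, your ``a posteriori'' route to $\mathbb{E}\|Y\|<\infty$ does not quite work as stated: the upper-tail bounds $\mathbb{P}(S_n>\mathbb{E}[S_n]+u)\le e^{-u^2/(2\sigma_T^2)}$ are centered at the very quantities $\mathbb{E}[S_n]$ you need to control, so by themselves they cannot rule out $\mathbb{E}[S_n]\to\infty$. You would need the lower-deviation half, $\mathbb{P}(S_n<\mathbb{E}[S_n]-u)\le e^{-u^2/(2\sigma_T^2)}$ (available at no extra cost, since $-F$ is also $\sigma_T$-Lipschitz), which combined with $S_n\uparrow S<\infty$ a.s. forces $\sup_n\mathbb{E}[S_n]<\infty$; alternatively, Fernique's theorem, which you also invoke, settles the integrability directly.
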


\begin{lemma}[Young's inequality with $\varepsilon$]
    Let $a,b \geq 0$, $1 < p,q < \infty$ with $\tfrac{1}{p} + \tfrac{1}{q} = 1$.  
    Then for every $\varepsilon > 0$, we have
    \begin{equation}\label{eq:young}
        ab \leq \varepsilon a^p + \varepsilon^{-\frac{q}{p}} b^q.
    \end{equation}
\end{lemma}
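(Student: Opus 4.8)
The plan is to deduce this weighted inequality from the unweighted Young inequality $uv \le \frac{u^p}{p} + \frac{v^q}{q}$ by a simple rescaling argument. First I would dispose of the degenerate cases: if either $a=0$ or $b=0$, the left-hand side vanishes and the inequality holds trivially, so I may assume $a,b>0$ throughout.

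The core ingredient is the standard Young inequality itself, for which I would invoke the convexity of the exponential (equivalently, Jensen's inequality applied to $\log$). Writing $uv = \exp\big(\tfrac{1}{p}\log u^p + \tfrac{1}{q}\log v^q\big)$ and using $\tfrac{1}{p}+\tfrac{1}{q}=1$ together with the convexity of $\exp$ yields $uv \le \tfrac{1}{p}\exp(\log u^p) + \tfrac{1}{q}\exp(\log v^q) = \tfrac{u^p}{p} + \tfrac{v^q}{q}$ for all $u,v>0$.

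Next I would introduce a scaling parameter $\lambda>0$ and apply the unweighted inequality to $u=\lambda a$ and $v=b/\lambda$. This leaves the product $uv=ab$ unchanged and gives $ab \le \tfrac{\lambda^p}{p}a^p + \tfrac{1}{q\lambda^q}b^q$. Choosing $\lambda=(p\varepsilon)^{1/p}$ forces the first coefficient to equal exactly $\varepsilon$, while the second becomes $\tfrac{1}{q\,p^{q/p}}\,\varepsilon^{-q/p}$.

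The only remaining point is to absorb the numerical constant. Since $p>1$ and $q>1$ give $p^{q/p}\ge 1$ and $q\ge 1$, the factor $\tfrac{1}{q\,p^{q/p}}$ is at most $1$, so the second term is bounded by $\varepsilon^{-q/p}b^q$, which completes the proof. I do not expect any genuine obstacle, as the inequality is elementary; the one step demanding a little care is selecting the correct scaling $\lambda$ and verifying that the leftover constant is indeed $\le 1$, which is precisely what makes the clean coefficient $1$ (rather than the sharper $\tfrac{1}{p}$ and $\tfrac{1}{q}$) admissible.
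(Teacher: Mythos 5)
Your proof is correct: the rescaling $u=\lambda a$, $v=b/\lambda$ with $\lambda=(p\varepsilon)^{1/p}$ turns the classical Young inequality into the weighted form, and the observation that $q\,p^{q/p}\ge 1$ (so the leftover constant can be dropped) is exactly what justifies the clean coefficient $\varepsilon^{-q/p}$ in the statement. Note that the paper states this lemma as a standard fact without giving any proof, so there is nothing to compare against; your argument is the standard derivation and fills the omission correctly.
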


     The following exponential inequality for martingales is derived from \cite[Lemma 2.1]{inequality}.
   \begin{theorem}	Let $(\Omega,\mathcal{F},P)$ be a probability triple and let $\{M_t\}_{t\geq 0}$ be a locally square integrable martingale w.r.t. the filtration 
  $ \{\mathcal{F}_t\}_{t\geq 0}$, $M_0=0.$ 
  Let \( \langle M\rangle _t \) denote the quadratic variation of \( \{M_t\} \).  Suppose that $|\Delta M_t|=|M_t-M_{t^-} |\leq K$ for all $t>0$, $0\leq K<\infty.$
  Then for each $a>0$, $b>0$,
  \begin{equation*}
  	\mathbb{P}(M_t\geq a \ \text{and}\  \langle M\rangle _t \leq b^2 \ \text{for some}\  t)
  	\leq \exp\left(-\frac{a^2}{2(aK+b^2)}\right).
  \end{equation*}\label{exponential ineq}
   \end{theorem}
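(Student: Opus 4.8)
The plan is to use the classical exponential--supermartingale method (the continuous-time Freedman/Bennett argument): for each $\lambda>0$ I will construct a nonnegative supermartingale $Z^\lambda$ starting from $1$ whose supremum dominates a deterministic threshold on the event of interest, read off a probability bound from the maximal inequality for nonnegative supermartingales, and then optimize over $\lambda$.

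Concretely, I would set
\[
g(\lambda)=\frac{e^{\lambda K}-1-\lambda K}{K^{2}},
\qquad
Z_t^\lambda=\exp\!\bigl(\lambda M_t-g(\lambda)\langle M\rangle_t\bigr),\quad Z_0^\lambda=1,
\]
and prove that $Z^\lambda$ is a supermartingale by showing its predictable drift is non-positive. The essential ingredient is the elementary convexity inequality $e^{y}-1-y\le \tfrac{e^{c}-1-c}{c^{2}}\,y^{2}$, valid for all $y\le c$; with $c=\lambda K$ and $y=\lambda\Delta M_s\le\lambda K$ it gives $e^{\lambda\Delta M_s}-1-\lambda\Delta M_s\le g(\lambda)(\Delta M_s)^2$. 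Writing $M=M^c+M^d$ and $\langle M\rangle=\langle M^c\rangle+\langle M^d\rangle$ and applying It\^o's formula to $e^{\lambda M_t}$, the continuous part contributes $\tfrac{\lambda^2}{2}\langle M^c\rangle$ to the drift, while the jump sum $\sum_{s\le\cdot}Z^\lambda_{s^-}(e^{\lambda\Delta M_s}-1-\lambda\Delta M_s)$ contributes, after taking predictable compensators in the pointwise bound, at most $g(\lambda)\langle M^d\rangle$. Since $\tfrac{\lambda^2}{2}\le g(\lambda)$ (from $e^x-1-x\ge x^2/2$), subtracting $g(\lambda)\langle M\rangle$ leaves a drift bounded above by $(\tfrac{\lambda^2}{2}-g(\lambda))\,Z^\lambda_{\cdot^-}d\langle M^c\rangle\le 0$. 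Thus $Z^\lambda$ is a nonnegative local supermartingale and hence, by Fatou, a genuine supermartingale; as $M$ is only locally square integrable, I would run this argument along a localizing sequence of stopping times and pass to the limit.

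On the event $A=\{\,M_t\ge a \text{ and } \langle M\rangle_t\le b^2 \text{ for some } t\,\}$, at the relevant time one has $\lambda M_t\ge\lambda a$ and $-g(\lambda)\langle M\rangle_t\ge-g(\lambda)b^2$, so $\sup_t Z_t^\lambda\ge \exp(\lambda a-g(\lambda)b^2)$ there. The maximal inequality $\mathbb{P}(\sup_t Z_t^\lambda\ge c)\le 1/c$ for the nonnegative supermartingale $Z^\lambda$ (with $Z_0^\lambda=1$) then yields
\[
\mathbb{P}(A)\le \exp\!\bigl(-(\lambda a-g(\lambda)b^2)\bigr).
\]
Maximizing the exponent over $\lambda>0$, whose optimizer is $\lambda^\ast=K^{-1}\log(1+aK/b^2)$, produces the Bennett-type bound $\exp\!\bigl(-\tfrac{b^2}{K^2}h(\tfrac{aK}{b^2})\bigr)$ with $h(u)=(1+u)\log(1+u)-u$; finally the elementary estimate $h(u)\ge \tfrac{u^2}{2(1+u)}$ collapses this to the stated $\exp\!\bigl(-\tfrac{a^2}{2(aK+b^2)}\bigr)$.

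The main obstacle is the supermartingale step, and specifically obtaining the bound with the \emph{predictable} quadratic variation $\langle M\rangle$ rather than the optional one $[M]$: the pointwise convexity bound on the jumps must be passed through the predictable compensator (using that $\langle M^d\rangle$ compensates $\sum(\Delta M_s)^2$), the continuous part must be absorbed simultaneously via $\tfrac{\lambda^2}{2}\le g(\lambda)$, and one must justify the localization for a merely locally square integrable $M$. Everything after the supermartingale property is a one-dimensional calculus optimization.
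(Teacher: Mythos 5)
The paper itself contains no proof of this theorem: it is quoted directly from the literature (``derived from [Lemma 2.1]'' of the cited reference), so your attempt can only be judged on its own merits, and on those merits it is essentially the standard proof of that cited lemma. The ingredients you assemble --- the Freedman-type exponential supermartingale $Z^\lambda_t=\exp\bigl(\lambda M_t-g(\lambda)\langle M\rangle_t\bigr)$ with $g(\lambda)=(e^{\lambda K}-1-\lambda K)/K^2$, the convexity bound $e^{y}-1-y\le \lambda^{-2}g(\lambda)\,y^2$ for $y\le\lambda K$, Ville's maximal inequality $\mathbb{P}(\sup_t Z^\lambda_t\ge c)\le 1/c$, the optimizer $\lambda^\ast=K^{-1}\log(1+aK/b^2)$ yielding the Bennett exponent $\tfrac{b^2}{K^2}h(aK/b^2)$ with $h(u)=(1+u)\log(1+u)-u$, and finally $h(u)\ge \tfrac{u^2}{2(1+u)}$ --- are all correct and fit together as you describe. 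I checked the calculus: $h(u)\ge u^2/(2(1+u))$ follows by integrating $\log(1+u)\ge\tfrac12\bigl(1-(1+u)^{-2}\bigr)$, and substituting $u=aK/b^2$ does give exactly $a^2/(2(aK+b^2))$. The localization-plus-Fatou remark correctly upgrades the nonnegative local supermartingale to a genuine one, and the passage from the optional jump sum to $\langle M^d\rangle$ via compensation against the predictable integrand $Z^\lambda_{s^-}$ is the right mechanism.

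The one soft spot is precisely where you flag it, but it is slightly worse than you suggest: when $M$ has jumps at \emph{predictable} times, $\langle M\rangle$ itself jumps, so the jump of $Z^\lambda$ at such a time is $Z^\lambda_{s^-}\bigl(e^{\lambda\Delta M_s-g(\lambda)\Delta\langle M\rangle_s}-1\bigr)$ rather than $Z^\lambda_{s^-}\bigl(e^{\lambda\Delta M_s}-1\bigr)$, and the pointwise convexity bound applied to this mixed increment (plus the cross-variation term in the integration by parts between $e^{\lambda M}$ and $e^{-g(\lambda)\langle M\rangle}$) does not compensate cleanly. The standard repair is to treat predictable jump times by the discrete-time (conditional-expectation) form of the same inequality, $\mathbb{E}\bigl[e^{\lambda\Delta M_s}\mid\mathcal{F}_{s^-}\bigr]\le 1+g(\lambda)\,\mathbb{E}\bigl[(\Delta M_s)^2\mid\mathcal{F}_{s^-}\bigr]\le e^{g(\lambda)\Delta\langle M\rangle_s}$, or equivalently to dominate $Z^\lambda$ by the exponential-compensator local martingale multiplied by a nonincreasing predictable factor. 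For quasi-left-continuous $M$ --- in particular for the continuous martingale $\int_0^\cdot(\cdots)\,dW$ to which the paper actually applies this theorem, where $\langle M\rangle$ is continuous --- your argument is complete as written, so the gap is a genuine but routine technicality in the fully general statement, not a flaw in the method.
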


\section{Main results}\label{main results}
In this section, we compute the Onsager–Machlup functional for the singular case ($1/4 < H < 1/2$), the regular case ($1/2 < H < 1$), and the standard case ($H = 1/2$).
The distinction among them stems from the fact that the expression of $(K_H^\sigma)^{-1}$ corresponds to fundamentally different properties of fractional integrals and fractional derivatives for different values of $H$.
We first demonstrate how to simplify the Onsager-Machlup functional by means of Girsanov’s theorem, and then present the detailed computation of the main results.
Moreover, for the standard case $H = 1/2$, we provide a stronger result that holds in terms of the H\"older norm via It\^o’s formula—an approach not feasible in the nonstandard cases due to the presence of fractional integrals or derivatives.

\subsection{ Girsanov transform}
We define the operator $K_H^\sigma$ on $L^2([0,1])$, associated with $\sigma_\cdot$ and $K_H$, by  
\begin{equation}
   (K_H^\sigma f)(t) = \int_0^t \sigma_s \, d(K_H f)(s), 
   \qquad t \in [0,1].
\end{equation}

From the proof of Theorem~\ref{measurable norm}, we deduce that $K_H^\sigma$ is injective.  
Hence, we can define its inverse
\[
  (K_H^\sigma)^{-1} : K_H^\sigma(L^2([0,1])) \to L^2([0,1]), 
   \qquad 
   f \mapsto (K_H)^{-1}\left(\int_0^\cdot \sigma_s^{-1} \, df_s\right).
\]

For any $\phi$ such that $\phi - x_0 \in K_H^\sigma(L^2([0,1]))$, we denote by $\dot{\phi}$ the element of $L^2([0,1])$ satisfying
\begin{equation*}
    \phi - x_0 = K_H^\sigma(\dot{\phi}).
\end{equation*}

It is worth noting that the operator $K_H$ is an isomorphism from $L^2([0,1])$ onto 
$I_{0^+}^{H+\frac{1}{2}}(L^2([0,1]))$ 
(see \cite[Theorem~2.1]{Decreusefond1999}).  
Consequently, $K_H(\dot{\phi})$ is $H$-H\"older continuous, and therefore $\phi=x_0+K_H^\sigma(\dot{\phi})$ also enjoys $H$-H\"older continuity by Lemma~\ref{young2}.
In the following, we introduce a method to simplify the Onsager--Machlup functional by means of Girsanov's theorem.

Let 
\[
\tilde{X}_t = \int_0^t \sigma_s \, dB^H_s + \phi_t.
\]
We define a new process
\begin{equation}
    \tilde{W}_t 
    = W_t + \int_0^t 
    \left[
        \dot{\phi}_s 
        - (K_H^\sigma)^{-1}\!\left( \int_0^{\cdot} b_u(\tilde{X}_u) \, du \right)(s)
    \right] ds.  \label{Girtrans}
\end{equation}
If $\tilde{W}_t$ satisfies the Novikov condition (as will be proved in the following lemma),  
there exists a probability measure $\tilde{\mathbb{P}}$, absolutely continuous with respect to $\mathbb{P}$,  
such that $\tilde{W}_t$ is a Brownian motion under $\tilde{\mathbb{P}}$.  
The corresponding fractional Brownian motion associated with $\tilde{W}_t$ is given by
\begin{equation*}
    \tilde{B}^H_t 
    = B^H_t 
    + \int_0^t \sigma_s^{-1} \, d\phi_s
    - \int_0^t \sigma_s^{-1} b_s(\tilde{X}_s) \, ds.
\end{equation*}
Taking the differential of $\tilde{X}_t$ yields
\begin{align*}
    d\tilde{X}_t 
    &= d\phi_t + \sigma_t \, dB^H_t \\
    &= d\phi_t + \sigma_t \, d\!\left( 
        \tilde{B}^H_t 
        - \int_0^t \sigma_s^{-1} \, d\phi_s 
        + \int_0^t \sigma_s^{-1} b_s(\tilde{X}_s) \, ds  
    \right) \\
    &= b_t(\tilde{X}_t) \, dt + \sigma_t \, d\tilde{B}^H_t.
\end{align*}
This implies that $(\tilde{X}, \tilde{B}^H)$ is the strong solution of \eqref{fir} under the probability measure $\tilde{\mathbb{P}}$.

Define
\begin{equation}
    u_s = \dot{\phi}_s - (K_H^\sigma)^{-1}\left( \int_0^\cdot b_u(\tilde{X}_u) \, du \right)(s).
\end{equation}
Thus, the Onsager-Machlup functional can be simplified as follows:  
\[
\begin{aligned}
&\frac{\mathbb{P}(\|X - \phi\| \leq \varepsilon)}{\mathbb{P}\left( \left\| \int_0^\cdot \sigma_u \, dB^H_u \right\| \leq \varepsilon \right)} \\
=& \frac{\tilde{\mathbb{P}}(\| \tilde{X} - \phi \| \leq \varepsilon)}{\mathbb{P}\left( \left\| \int_0^\cdot \sigma_u \, dB^H_u \right\| \leq \varepsilon \right)} \\
=& \frac{\tilde{\mathbb{P}}\left( \left\| \int_0^\cdot \sigma_u \, dB^H_u \right\| \leq \varepsilon \right)}{\mathbb{P}\left( \left\| \int_0^\cdot \sigma_u \, dB^H_u \right\| \leq \varepsilon \right)} \\
=& \mathbb{E} \left[ \exp \left( -\int_0^1 u_s \, dW_s - \frac{1}{2} \int_0^1 u_s^2 \, ds \right) \,\middle|\, \left\| \int_0^\cdot \sigma_u \, dB^H_u \right\| \leq \varepsilon  \right].
\end{aligned}
\]

Next, we prove the validity of this Girsanov theorem.

\begin{lemma} 
	If the coefficients satisfy \textbf{Assumption~(A)}, 
the transformation~\eqref{Girtrans} satisfies the Novikov condition.
\end{lemma}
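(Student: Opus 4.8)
The plan is to verify the Novikov condition demanded by Lemma~\ref{lem:girsanov}, i.e. to show that $\mathbb{E}\big[\exp(\tfrac12\int_0^1 u_s^2\,ds)\big]<\infty$, where $u_s=\dot\phi_s-V_s$ and $V_s=(K_H^\sigma)^{-1}\big(\int_0^\cdot b_u(\tilde X_u)\,du\big)(s)=(K_H)^{-1}\big(\int_0^\cdot\sigma_u^{-1}b_u(\tilde X_u)\,du\big)(s)$. Since $\dot\phi$ is a fixed element of $L^2([0,1])$, I would first apply the $\varepsilon$-Young inequality \eqref{eq:young} to write, for an arbitrary $\delta>0$, $\tfrac12\int_0^1 u_s^2\,ds\le \tfrac{1+\delta}{2}\int_0^1 V_s^2\,ds+C_\delta$, so that the whole problem reduces to controlling the exponential moment of $\int_0^1 V_s^2\,ds$; the freedom in $\delta$ must be retained so as not to spoil the sharp constant later. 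Everything then hinges on the behaviour of $V$, which is qualitatively different across the three regimes because the representation of $(K_H)^{-1}$ changes.

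For the two cases $H\le 1/2$ I expect a \emph{deterministic} bound, so that Novikov is immediate. When $H<1/2$ (case (A1)), the integrand $\int_0^\cdot\sigma_u^{-1}b_u(\tilde X_u)\,du$ is differentiable, so formula \eqref{l1/2} gives $V_s=s^{-\alpha}I_{0^+}^\alpha\big(s^\alpha\sigma_s^{-1}b_s(\tilde X_s)\big)$; since $b$ is bounded and $\sigma\ge m$, a Beta-integral computation yields a pathwise bound of the form $|V_s|\le \tfrac{\|b\|_\infty}{m}\,\mathrm B(\alpha,\alpha+1)\Gamma(\alpha)^{-1}\,s^{\alpha}$, independent of the sample path, whence $\int_0^1 u_s^2\,ds$ is a finite constant. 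When $H=1/2$ (case (A3)), $(K^\sigma_{1/2})^{-1}$ reduces to differentiating the absolutely continuous integral and returns $V_s=\sigma_s^{-1}b_s(\tilde X_s)$, again bounded by $\|b\|_\infty/m$; the higher-order smoothness of $b$ assumed in (A3) is not needed here, as it serves only the later trace computation. In both cases the boundedness of $b$ is the sole ingredient.

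The genuine obstacle is the regular case $H>1/2$ (case (A2)). By \eqref{g1/2} the operator now involves a fractional derivative, $V_s=s^\alpha D_{0^+}^\alpha\big(s^{-\alpha}\sigma_s^{-1}b_s(\tilde X_s)\big)$, which amplifies rather than smooths, so no pathwise bound exists and one must exploit the random $\beta$-Hölder regularity of $\tilde X=\int_0^\cdot\sigma_u\,dB^H_u+\phi$. I would insert the Weyl representation \eqref{Weyl}, split the integrand into the singular boundary term, controlled in $L^2$ via $\|b\|_\infty$ and $\alpha<\tfrac12$ as a lower-order contribution, and the increment term, which is estimated through the seminorm $[g]_\beta$ of $g:=\sigma^{-1}b_\cdot(\tilde X_\cdot)$. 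Using $\sigma\in C^1$ together with the Lipschitz assumption on $b$ gives $[g]_\beta\le \tfrac{L}{m}(1+[\tilde X]_\beta)+C$, and a careful estimate of the resulting weighted singular integral should produce the sharp linear bound $\|V\|_{L^2}\le \tfrac{L}{m}\cdot\tfrac{\alpha\,\Gamma(\beta-\alpha)}{\sqrt{2\beta+1}\,\Gamma(1+\beta)}\,[\tilde X]_\beta + C$. A further application of \eqref{eq:young} to $[\tilde X]_\beta\le \big[\int_0^\cdot\sigma_u\,dB^H_u\big]_\beta+[\phi]_\beta$ then transfers the whole exponent onto the square of the Gaussian seminorm $\big[\int_0^\cdot\sigma_u\,dB^H_u\big]_\beta$.

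It remains to integrate a square-exponential of this Gaussian Hölder seminorm, and here I would appeal to the concentration bound of Lemma~\ref{nov2} together with the tail-to-moment identity of Lemma~\ref{nov1}: these give $\mathbb{E}[\exp(\lambda[\,\cdot\,]_\beta^2)]<\infty$ precisely while $\lambda$ stays below the critical threshold $1/(2\sigma_T^2)$, where by Lemma~\ref{variance} the variance parameter satisfies $\sigma_T^2\le M^2$ (the bound $V_h^t\le M^2 h^{2H}$ evaluated at $h=1$). Letting $\delta\to 0$, finiteness holds exactly when the product of the operator constant and $M$ is subcritical, i.e. when $\tfrac{M\alpha L\,\Gamma(\beta-\alpha)}{m\sqrt{2\beta+1}\,\Gamma(1+\beta)}<1$, which is precisely Condition~(2) of (A2). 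The step I expect to be hardest is obtaining the \emph{sharp} value of this operator constant from the weighted fractional-derivative estimate, so that it matches Condition~(2) with no slack and aligns cleanly with the Fernique-type integrability threshold. Finally, the identical computation on $[0,T]$ replaces the constant by one scaling like $T^{(2H+1)/2}$ times the same factor, which explains why Novikov, and hence the functional, can still be obtained on a sufficiently short interval when Condition~(2) fails, as asserted in the remark.
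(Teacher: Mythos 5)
Your proposal is correct and takes essentially the same route as the paper's proof: for $H\le\tfrac12$ you obtain the same deterministic pathwise bound from the boundedness of $b$ via \eqref{l1/2}, and for $H>\tfrac12$ you use the same chain of ideas --- Young's inequality with $\varepsilon$ to isolate $\dot\phi$, the Weyl representation of $D^\alpha_{0^+}$, the Lipschitz condition to bound the increment term by the H\"older seminorm of $\int_0^\cdot\sigma_u\,dB^H_u$, and the Gaussian tail/moment Lemmas~\ref{nov1}--\ref{nov2} to get square-exponential integrability below the critical threshold, which is exactly Condition~(2) of (A2). Your closing observation about working on $[0,T]$ when Condition~(2) fails likewise matches the paper's remark, so there is no substantive difference.
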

\begin{proof}
	We need to prove that 
    \begin{equation}
        \mathbb{E}\!\left(\exp\!\left(\frac{1}{2}\int_0^1 u_s^2 \, ds\right)\right) < \infty. \label{novikov}
    \end{equation}

	For the case $H \leq  1/2$, we have
	\[
	 u_s = \dot{\phi}_s- s^{-\alpha} I_{0^+}^\alpha  \big(s^\alpha \sigma_s^{-1}b_s(\phi_s+\int_0^s \sigma_u \, dB^H_u)\big),
	\]
    where 
    \begin{align*}
        &\big|s^{-\alpha}I_{0^+}^\alpha  \big(s^\alpha \sigma_s^{-1}b_s(\phi_s+\int_0^s \sigma_u \, dB^H_u)\big)\big|\\
        \leq & \frac{s^{-\alpha}}{\Gamma(\alpha)} \int_0^s (s-u)^{\alpha-1}u^\alpha \sigma_u^{-1}b_u\!\left(\phi_u+\int_0^u \sigma_v \, dB^H_v\right) du\\
        <& \infty,
    \end{align*}
    due to the boundedness of $b$. Hence, condition \eqref{novikov} follows. 
   
    For the case $H>1/2$, we enlarge $|u_s|^2$ into a bounded part and a stochastic part 
by applying Young's inequality with $\varepsilon$. Specifically,
    \begin{align*}
        |u_s|^2 \leq (1+1/\epsilon_1)\dot{\phi}_s^2+(1+\epsilon_1)\Big|s^\alpha D^{\alpha}_{0^+}\big(s^{-\alpha} b_s(\phi_s+\int_0^s \sigma_u \, dB^H_u)\big)\Big|^2,
    \end{align*}
where
    \begin{align*}
        &\Big|s^\alpha D^{\alpha}_{0^+}\big(s^{-\alpha} b_s(\phi_s+\int_0^s \sigma_u \, dB^H_u)\big)\Big|^2 \\
        &\leq \frac{s^{2\alpha}}{\Gamma(1-\alpha)^2}\Bigg(\frac{ s^{-\alpha} \sigma_s^{-1} b_s(\phi_s+\int_0^s \sigma_u \, dB^H_u)}{s^\alpha}\\
        &\qquad\qquad\qquad+\alpha\int_0^s \frac{ s^{-\alpha} \sigma_s^{-1} b_s(\phi_s+\int_0^s \sigma_u \, dB^H_u)- r^{-\alpha} \sigma_r^{-1} b_r(\phi_r+\int_0^r \sigma_u \, dB^H_u)}{(s-r)^{\alpha+1}} \, dr   \Bigg)^2\\
        &\leq  \frac{s^{2\alpha}(1+1/\epsilon_2)}{\Gamma(1-\alpha)^2}\Bigg(\frac{ s^{-\alpha} \sigma_s^{-1} b_s(\phi_s+\int_0^s \sigma_u \, dB^H_u)}{s^\alpha}\\
        &\qquad\qquad\qquad+\alpha\int_0^s \frac{ s^{-\alpha} \sigma_s^{-1} b_s(\phi_s+\int_0^s \sigma_u \, dB^H_u)- r^{-\alpha} \sigma_r^{-1} b_s(\phi_s+\int_0^s \sigma_u \, dB^H_u)}{(s-r)^{\alpha+1}} \, dr \Bigg)^2\\
        & +\frac{s^{2\alpha}\alpha^2(1+\epsilon_2)}{\Gamma(1-\alpha)^2}\Bigg(\int_0^s \frac{ b_s(\phi_s+\int_0^s \sigma_u \, dB^H_u)-  b_r(\phi_r+\int_0^r \sigma_u \, dB^H_u)}{r^{\alpha} \sigma_r(s-r)^{\alpha+1}} \, dr \Bigg)^2.
    \end{align*}

    We further enlarge the stochastic part in the above inequality:
    \begin{align*}
        &\Bigg(\int_0^s \frac{ b_s(\phi_s+\int_0^s \sigma_u \, dB^H_u)-  b_r(\phi_r+\int_0^r \sigma_u \, dB^H_u)}{r^{\alpha} \sigma_r(s-r)^{\alpha+1}} \, dr \Bigg)^2\\ 
        &\leq \frac{L_2^2}{m^2}\Bigg(\int_0^s \frac{|\phi_s-\phi_r|+|\int_0^s \sigma_u \, dB^H_u-\int_0^r \sigma_u \, dB^H_u| }{r^{\alpha} (s-r)^{\alpha+1}} \, dr\Bigg)^2\\
        &\leq \frac{L_2^2}{m^2}\Bigg((1+1/\epsilon_3)\Big(\int_0^s \frac{|\phi_s-\phi_r|}{r^{\alpha} (s-r)^{\alpha+1}} \, dr\Big)^2 \\
        &\qquad+(1+\epsilon_3)\Big(\int_0^s \frac{\big|\int_0^s \sigma_u \, dB^H_u-\int_0^r \sigma_u \, dB^H_u\big| }{r^{\alpha} (s-r)^{\alpha+1}} \, dr\Big)^2  \Bigg)\\
         &\leq \frac{L_2^2}{m^2}\Bigg((1+1/\epsilon_3)\Big(\int_0^s \frac{|\phi_s-\phi_r|}{r^{\alpha} (s-r)^{\alpha+1}} \, dr\Big)^2\\
         &\qquad+(1+\epsilon_3)\Big\Vert \int_0^\cdot \sigma_u \, dB^H_u \Big\Vert_{\beta,T}^2 \, s^{2(\beta-2\alpha)} \, \mathrm{B}(1-\alpha,\beta-\alpha)^2 \Bigg).
    \end{align*}

    Consequently, we require
    \begin{equation}
        \mathbb{E}\Bigg(\exp\!\Bigg(\frac{\alpha^2 L_2^2 T^{2\beta+1} \mathrm{B}(1-\alpha,\beta-\alpha)^2}{2(2\beta+1)m^2\Gamma(1-\alpha)^2} \prod_{i=1}^3(1+\epsilon_i)    \Big\Vert\int_0^\cdot \sigma_u \, dB^H_u \Big\Vert_{\beta,T}^2 \Bigg)\Bigg) <\infty. \label{novikov2}
    \end{equation}

  According to Lemma~\ref{nov1} and Lemma~\ref{nov2}, if 
   \begin{align*}
       \frac{\alpha^2 L_2^2 T^{2\beta+1} \mathrm{B}(1-\alpha,\beta-\alpha)^2}{2(2\beta+1)m^2\Gamma(1-\alpha)^2} <\frac{1}{2M^2 T^{2(H-\beta)}},
   \end{align*}
   that is,
   \begin{equation}
       T^{2H+1} <\frac{m^2(2\beta+1)\Gamma(1+\beta)^2}{M^2\alpha^2L_2^2 \Gamma(\beta-\alpha)^2},
   \end{equation}
   then condition \eqref{novikov2} holds. 
\end{proof}

In the following, we compute the exponential term obtained above to derive the main results.

\subsection{Singular case }
In this section we will compute the Onsager-Machlup functional for $\frac{1}{4}< H<\frac{1}{2}.$
\begin{theorem} \label{result1} Let $X$ be the solution of \eqref{fir}, and assume that the coefficients satisfy Assumption \((A)\). 
For any $\phi \in K_H^\sigma(L^2([0,1]))$ with $\phi_0 = x_0$, when $ \tfrac{1}{4} < H < \tfrac{1}{2}$, 
the Onsager--Machlup functional of $X$ with respect to the norms 
$\Vert \cdot \Vert_\beta$, where $H-\tfrac{1}{2}<\beta<H-\tfrac{1}{4}$, 
and $\Vert \cdot \Vert_\infty$, can be expressed as  
\begin{equation*}
J(\phi,\dot{\phi}) = -\tfrac{1}{2} \int_0^1 
\left(\,\dot{\phi}_s- s^{-\alpha} I_{0^+}^\alpha \, s^{\alpha} \, \sigma_s^{-1} b_s(\phi_s)\right)^2 
+ d_H \, \partial_x b_s(\phi_s)\, ds,
\end{equation*}
where
\begin{equation*}
d_H = \sqrt{\frac{2H \,\Gamma\!\left(\tfrac{1}{2}+H\right)\Gamma\!\left(\tfrac{3}{2}-H\right)}{\Gamma(2-2H)}}.
\end{equation*}

\end{theorem}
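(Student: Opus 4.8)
The plan is to begin from the Girsanov reduction already carried out, which recasts the Onsager--Machlup functional as
\[
\lim_{\varepsilon\to0}\mathbb{E}\Big[\exp\Big(-\int_0^1 u_s\,dW_s-\tfrac12\int_0^1 u_s^2\,ds\Big)\,\Big|\,A_\varepsilon\Big],
\]
where $A_\varepsilon=\{\,\|\int_0^\cdot\sigma_u\,dB^H_u\|\le\varepsilon\,\}$ for either the supremum norm or a H\"older norm $\|\cdot\|_\beta$ with $0<\beta<H-\tfrac14$, and, using \eqref{l1/2},
\[
u_s=\dot\phi_s-s^{-\alpha}I^\alpha_{0^+}\big(s^\alpha\sigma_s^{-1}b_s(\phi_s+\eta_s)\big),\qquad \eta_s:=\int_0^s\sigma_u\,dB^H_u .
\]
First I would Taylor expand the drift about $\phi_s$, writing $b_s(\phi_s+\eta_s)=b_s(\phi_s)+\partial_x b_s(\phi_s)\,\eta_s+\tfrac12\partial_x^2 b_s(\xi_s)\,\eta_s^2$, which is legitimate since $b$ is $C^2$ and bounded. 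This induces a decomposition $u=v+w+\rho$ in which $v_s=\dot\phi_s-s^{-\alpha}I^\alpha_{0^+}(s^\alpha\sigma_s^{-1}b_s(\phi_s))$ is deterministic, $w_s=-s^{-\alpha}I^\alpha_{0^+}(s^\alpha\sigma_s^{-1}\partial_x b_s(\phi_s)\,\eta_s)$ is linear in the noise, and $\rho_s$ gathers the quadratic remainder.

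Second, I would expand the exponent and organise the terms according to their order in $\eta$. The order-zero contribution is exactly $-\tfrac12\int_0^1 v_s^2\,ds$, the deterministic square term of the claimed functional, which factors out of the conditional expectation. On $A_\varepsilon$ the small-ball constraint forces $\|\eta\|_\infty\le\varepsilon$, and since the operator $s^{-\alpha}I^\alpha_{0^+}s^\alpha$ is bounded one has $\|w\|_\infty\lesssim\varepsilon$ and $\|\rho\|_\infty\lesssim\varepsilon^2$; hence every \emph{pathwise} (Lebesgue) cross term --- $\int_0^1 v_sw_s\,ds$, $\int_0^1 v_s\rho_s\,ds$, $\tfrac12\int_0^1 w_s^2\,ds$, $\int_0^1 w_s\rho_s\,ds$, $\tfrac12\int_0^1\rho_s^2\,ds$ --- is bounded by a power of $\varepsilon$ uniformly on $A_\varepsilon$ and vanishes as $\varepsilon\to0$. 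What genuinely survives are the It\^o integrals $-\int_0^1 v_s\,dW_s$ and $-\int_0^1 w_s\,dW_s$, together with the remainder $-\int_0^1\rho_s\,dW_s$, none of which is controlled pathwise by the size of its integrand.

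Third, I would invoke the trace machinery of Section~\ref{perliminary}. The term $-\int_0^1 v_s\,dW_s$ is a single Wiener integral with deterministic integrand, so Theorem~\ref{estimate1} gives conditional exponential limit $1$. The decisive term is $-\int_0^1 w_s\,dW_s$: substituting the Wiener representation $\eta_s=\int_0^1(K_H^*(\sigma\mathbf 1_{[0,s]}))(r)\,dW_r$, whose kernel is supported on $r\le s$ so that $w$ stays adapted, turns it into a double Wiener integral $\int_0^1\!\int_0^1 k(s,r)\,dW_r\,dW_s$. Letting $f$ be the symmetrisation of $k$ and checking that $K(f)$ is nuclear, Theorem~\ref{estimate2} yields the factor $e^{-\operatorname{Tr}(f)}$. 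The constant $d_H$ is produced precisely by $\operatorname{Tr}(f)=\int_0^1 f(s,s)\,ds$: the diagonal of the composite kernel is governed by the singular behaviour of $K_H^*(\sigma\mathbf 1_{[0,s]})(r)$ as $r\uparrow s$, which I expect to contribute a factor $\sigma_s$ cancelling the $\sigma_s^{-1}$ in $w$, leaving $\partial_x b_s(\phi_s)$ weighted by an $H$-dependent Beta/Gamma combination equal to $\tfrac12 d_H$.

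Finally, I would assemble the pieces. To decouple the first- and second-chaos exponentials and to dispose of $-\int_0^1\rho_s\,dW_s$, I would use H\"older's inequality for conditional expectations together with the uniform integrability supplied by the exponential martingale inequality (Theorem~\ref{exponential ineq}) and Lemmas~\ref{nov1}--\ref{nov2}, bounding the conditional variance of $\int_0^1\rho_s\,dW_s$ on $A_\varepsilon$ by $O(\varepsilon^2)$ via the second-moment estimate of Lemma~\ref{variance}, so that its contribution squeezes to $1$. I expect the genuine obstacle to be the analysis of the double Wiener integral: establishing that $K(f)$ is nuclear --- which is exactly where the hypothesis $H>\tfrac14$ is used, the composite kernel failing to have a summable trace for $H\le\tfrac14$ --- and carrying out the explicit diagonal computation of $\operatorname{Tr}(f)$ that yields the precise constant $d_H$. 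The admissibility of the norms $\|\cdot\|_\beta$ with $\beta<H-\tfrac14$ and of $\|\cdot\|_\infty$ enters only through $A_\varepsilon$ and through the measurability established in Theorem~\ref{measurable norm}, so the argument is identical for both.
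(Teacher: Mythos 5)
Your proposal follows the same route as the paper's proof: the Girsanov reduction, Taylor expansion of the drift into deterministic, linear, and quadratic parts, Theorem~\ref{estimate1} for the first-chaos terms, the $K_H^*$ representation converting the linear-in-noise integral into a double Wiener integral handled by the trace Theorem~\ref{estimate2} (this is the paper's $C_2$, whose diagonal limit $f(s,s)=b_H\Gamma(1-\alpha)\,\partial_x b_s(\phi_s)=d_H\,\partial_x b_s(\phi_s)$ is computed through the terms $D_1,D_2,D_3$), and an exponential-martingale argument for the Taylor remainder.

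The gap is in the remainder term and in your account of where the hypotheses are consumed. You propose to squeeze $-\int_0^1\rho_s\,dW_s$ by bounding its conditional variance by $O(\varepsilon^2)$; but conditioning on $A_\varepsilon$ means dividing by $\mathbb{P}(A_\varepsilon)$, which by the small-ball estimate \eqref{smbp} is only of order $\exp(-M_\beta\varepsilon^{-1/(H-\beta)})$, so a variance (or any moment) bound that is merely polynomially small in $\varepsilon$ yields nothing after this division. The paper's argument is sharper: on $A_\varepsilon$ the quadratic variation of the remainder martingale is bounded \emph{pathwise} by $C\varepsilon^4$, Theorem~\ref{exponential ineq} gives the joint tail $\mathbb{P}(|M_1|>\xi,\,A_\varepsilon)\le\exp\!\big(-\xi^2/(2k'\varepsilon^4)\big)$, and dividing by $\mathbb{P}(A_\varepsilon)$ produces the conditional tail $\exp\!\big(-\xi^2/(2k'\varepsilon^4)+K_2\varepsilon^{-1/(H-\beta)}\big)$, which is then integrated via Lemma~\ref{nov1}. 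This tends to $0$ precisely when $4>1/(H-\beta)$, i.e.\ $\beta<H-\tfrac14$ (and $4>1/H$, i.e.\ $H>\tfrac14$, for the supremum norm). So the hypotheses $H>\tfrac14$ and $\beta<H-\tfrac14$ are used exactly here, in the competition between the $\varepsilon^4$ quadratic-variation bound and the small-ball exponent --- not, as you assert, in the nuclearity of $K(f)$, and not through the measurability of the norms (Theorem~\ref{measurable norm} covers every $\beta<H$). Your appeal to Lemma~\ref{variance} for the remainder is also misplaced: that lemma feeds the small-ball estimate, whereas the remainder bound comes from the pathwise estimate $\|\rho\|_\infty\le C\varepsilon^2$ on $A_\varepsilon$. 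With the remainder step repaired along these lines, the rest of your outline matches the paper's proof.
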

\begin{proof}
Here we only provide the proof for the H\"older norm case; the proof for the supremum norm is similar.

According to the Girsanov theorem,
	\begin{align*}
\mathbb{P}(\Vert X-\phi\Vert_\beta \leq \varepsilon)
&=\mathbb{E}\Big(\exp\Big[ -\int_0^1 u_s dW_s - \frac{1}{2} \int_0^1 u_s^2 ds\Big] I_{\Vert \int_0^t \sigma_s B^H_s\Vert_\beta \leq \varepsilon}\Big) \\
  &=	\mathbb{E}\left(\exp(A_1+A_2)I_{\Vert \int_0^t \sigma_s B^H_s\Vert_\beta \leq \varepsilon}\right),
\end{align*}
where
\begin{align*}
	A_1 &= -\int_0^1 \dot{\phi}_s- s^{-\alpha}I_{0^+}^\alpha s^{\alpha}  \sigma_s^{-1}b_s\left(\phi_s+\int_0^s \sigma_u dB^H_u\right)dW_s,\\
 	A_2 &= -\frac{1}{2}\int_0^1 \left(\dot{\phi}_s-s^{-\alpha}I_{0^+}^\alpha s^{\alpha} \sigma_s^{-1} b_s\left(\phi_s+\int_0^s \sigma_u dB^H_u\right)\right)^2 ds.
\end{align*}
	The limit of $\mathbb{E} (\exp(A_2)|\Vert \int_0^t \sigma_s B^H_s\Vert_\beta \leq \varepsilon)$ is
\begin{equation*}
	\exp\left(-\frac{1}{2}\int_0^1 \left(\dot{\phi}_s-s^{-\alpha}I_{0^+}^\alpha s^{\alpha}  \sigma_s^{-1} b_s\left(\phi_s\right)\right)^2 ds\right),
\end{equation*} 
which can derived by $\Vert\int_0^t \sigma_s dB^H_s \Vert_\beta\to 0$. 

	Define
	\begin{equation*}
		A_1=B_1+B_2, 
	\end{equation*}
	where
	\begin{equation*}
		B_1=-\int_0^1 \dot{\phi}_s dW_s,
	\end{equation*}
	and
	\begin{equation*}
		B_2=\int_0^1 s^{-\alpha}I_{0^+}^\alpha s^{\alpha}  \sigma_s^{-1} b_s\left(\phi_s+\int_0^s \sigma_u dB^H_u\right)dW_s.
	\end{equation*}
	According to Theorem \ref{estimate1}, we have
	\[
\lim _{\varepsilon \to 0}\mathbb{E}\left( \exp \left( B_1 \right) \mid \tilde{N} < \varepsilon \right) = 1.
\]
Using Taylor's expansion, we divide $B_2$ into three parts:
\begin{equation*}
	B_2=C_1+C_2+C_3,
\end{equation*}
where
\begin{align*}
	C_1 &= \int_0^1 s^{-\alpha}I_{0^+}^\alpha s^{\alpha}  \sigma_s^{-1} b_s\left(\phi_s\right)dW_s,\\
	C_2 &= \int_0^1 s^{-\alpha}I_{0^+}^\alpha s^{\alpha}  \sigma_s^{-1} \partial_xb_s\left(\phi_s\right)\left(\int_0^s \sigma_u dB^H_u\right)dW_s,\\
	C_3 &= \int_0^1 s^{-\alpha}I_{0^+}^\alpha s^{\alpha}  \sigma_s^{-1} R_sdW_s.
\end{align*}

According to Theorem~$\ref{estimate1}$, we have
\[
\lim _{\varepsilon \to 0} \mathbb{E}\left( \exp \left( C_1 \right) \mid \tilde{N} < \varepsilon \right) = 1.
\]
	Next, we consider the $C_2$ term:
\begin{align*}
	&C_2\\  
	=& \int_0^1 s^{-\alpha} I_{0^+}^\alpha s^{\alpha} \sigma_s^{-1} 
	\partial_x b(\phi_s) \int_0^s \sigma_u \, dB^H_u \, dW_s   \\[6pt]
	=&\int_0^1 s^{-\alpha} I_{0^+}^\alpha s^{\alpha} \sigma_s^{-1}
	\partial_x b(\phi_s) 
	\left( \int_0^1 (K_H^*\sigma I_{[0,s]})(u) \, dW_u \right) dW_s \\[6pt]
	=&\int_0^1 s^{-\alpha} I_{0^+}^\alpha s^{\alpha} \sigma_s^{-1}
	\partial_x b(\phi_s) 
	\left( \int_0^1 b_H \Gamma(1-\alpha) u^{\alpha} D_{1-}^\alpha u^{-\alpha} 
	\sigma_u I_{[0,s]}(u) \, dW_u \right) dW_s \\[6pt]
	=& \frac{1}{\Gamma(\alpha)} \int_0^1 s^{-\alpha} 
	\int_0^s (s-r)^{\alpha-1} r^{\alpha} \sigma_r^{-1} 
	\partial_x b(\phi_r) 
	\left( \int_0^r b_H \Gamma(1-\alpha) u^{\alpha} D_{r-}^\alpha u^{-\alpha} 
	\sigma_u \, dW_u \right) dr \, dW_s \\[6pt]
	=& \int_0^1 \int_0^s f(s,u) \, dW_u dW_s ,
\end{align*}
where
\begin{align}
	\notag&f(s,u)\\
	=& \frac{b_H \Gamma(1-\alpha)}{\Gamma(\alpha)} 
	\int_u^s s^{-\alpha} (s-r)^{\alpha-1} r^{\alpha} 
	\sigma_r^{-1} \partial_x b(\phi_r) u^{\alpha} 
	D_{r-}^\alpha u^{-\alpha} \sigma_u \, dr  \label{fsu1} \\[6pt]
	=& \frac{b_H}{\Gamma(\alpha)} 
	\int_u^s s^{-\alpha} (s-r)^{\alpha-1} r^{\alpha} 
	\sigma_r^{-1} \partial_x b(\phi_r) u^{\alpha} 
	\left( \frac{u^{-\alpha}\sigma_u}{(r-u)^\alpha} 
	- \alpha \int_u^r \frac{v^{-\alpha}\sigma_v - u^{-\alpha}\sigma_u}{(v-u)^{\alpha+1}} dv \right) dr \notag \\[6pt]
	=& D_1(s,u) + D_2(s,u) + D_3(s,u). \notag
\end{align}

In particular,
\begin{align*}
	D_1(s,u) &= \frac{b_H}{\Gamma(\alpha)} 
	\int_u^s s^{-\alpha} (s-r)^{\alpha-1} r^{\alpha} 
	\sigma_r^{-1} \partial_x b(\phi_r) u^{\alpha} 
	\frac{u^{-\alpha}\sigma_u}{(r-u)^\alpha} \, dr, \\[6pt]
	D_2(s,u) &= \frac{-\alpha b_H}{\Gamma(\alpha)} 
	\int_u^s s^{-\alpha} (s-r)^{\alpha-1} r^{\alpha} 
	\sigma_r^{-1} \partial_x b(\phi_r) u^{\alpha} 
	\int_u^r \frac{v^{-\alpha}\sigma_v - u^{-\alpha}\sigma_u}{(v-u)^{\alpha+1}} dv \, dr, \\[6pt]
	D_3(s,u) &= \frac{\alpha b_H}{\Gamma(\alpha)} 
	\int_u^s s^{-\alpha} (s-r)^{\alpha-1} r^{\alpha} 
	\sigma_r^{-1} \partial_x b(\phi_r) u^{\alpha} 
	\int_u^r \frac{u^{-\alpha}\sigma_v - u^{-\alpha}\sigma_u}{(v-u)^{\alpha+1}} dv \, dr.
\end{align*}
Then we compute the limits of $D_1(s,u)$, $D_2(s,u)$ and $D_3(s,u)$ as $u \to s$:
\begin{align*}
	\lim_{u\to s} D_1(s,u)
	&= \lim_{u\to s} 
	\frac{b_H}{\Gamma(\alpha)}
	\int_u^s s^{-\alpha} (s-r)^{\alpha-1} r^{\alpha}
	\sigma_r^{-1} \partial_x b_r(\phi_r) u^{\alpha}
	\frac{u^{-\alpha}\sigma_u}{(r-u)^\alpha}\,dr \\
	&= \partial_x b_s(\phi_s)\frac{b_H}{\Gamma(\alpha)}
	\lim_{u\to s}\int_u^s (s-r)^{\alpha-1}(r-u)^{-\alpha}\,dr \\
	&= \partial_x b_s(\phi_s)\frac{b_H}{\Gamma(\alpha)}
	\int_0^1 (1-x)^{\alpha-1}x^{-\alpha}\,dx \\
	&= \partial_x b_s(\phi_s)\frac{b_H}{\Gamma(\alpha)}B(1-\alpha,\alpha) \\
	&= \partial_x b_s(\phi_s)\, b_H\,\Gamma(1-\alpha).
\end{align*}

Next, for the second term we have
\begin{align*}
	\lim_{u\to s} D_2(s,u)
	&= \lim_{u\to s}
	\frac{-\alpha b_H}{\Gamma(\alpha)}
	\int_u^s s^{-\alpha}(s-r)^{\alpha-1}r^{\alpha}
	\sigma_r^{-1}\partial_x b_r(\phi_r)u^{\alpha}
	\int_u^r \frac{v^{-\alpha}\sigma_v-u^{-\alpha}\sigma_v}{(v-u)^{\alpha+1}}\,dv\,dr \\
	&= \partial_x b_s(\phi_s)s^{-1}
	\lim_{u\to s}\frac{\alpha^2 b_H}{\Gamma(\alpha)}
	\int_u^s (s-r)^{\alpha-1}
	\int_u^r \frac{1}{(v-u)^{\alpha}}\,dv\,dr \\
	&= \partial_x b_s(\phi_s)s^{-1}
	\lim_{u\to s}\frac{\alpha^2 b_H}{\Gamma(\alpha)(1-\alpha)}
	\int_u^s (s-r)^{\alpha-1}(r-u)^{1-\alpha}\,dr \\
	&= 0.
\end{align*}

We now turn to $D_3(s,u)$:
\begin{align*}
	&\lim_{u\to s}|D_3(s,u)|\\
	=& \lim_{u\to s}\left|
	\frac{\alpha b_H}{\Gamma(\alpha)}
	\int_u^s s^{-\alpha}(s-r)^{\alpha-1}r^{\alpha}
	\sigma_r^{-1}\partial_x b_r(\phi_r)u^{\alpha}
	\int_u^r \frac{u^{-\alpha}\sigma_v - u^{-\alpha}\sigma_u}
	{(v-u)^{\alpha+1}}\,dv\,dr
	\right|\\
	\le & \frac{\alpha b_H[\sigma]_\gamma}{\Gamma(\alpha)m}
	\left|\partial_x b_s(\phi_s)
	\lim_{u\to s}\int_u^s (s-r)^{\alpha-1}
	\int_u^r (v-u)^{\gamma-\alpha-1}\,dv\,dr\right|\\
	=& \frac{\alpha b_H[\sigma]_\gamma}{\Gamma(\alpha)m(\gamma-\alpha)}
	\left|\partial_x b_s(\phi_s)
	\lim_{u\to s}\int_u^s (s-r)^{\alpha-1}(r-u)^{\gamma-\alpha}\,dr\right|\\
	=& 0.
\end{align*}

Hence
\[
	f(s,s) = D_1(s,s)
	= \partial_x b_s(\phi_s)\, b_H\,\Gamma(1-\alpha).
\]

It follows that  
\begin{align*}
\lim_{\varepsilon \to 0} \mathbb{E} \left[ \exp(C_2) \,\middle|\, \left\| \int_0^\cdot \sigma_s\, dB^H_s \right\|_\beta \leq \varepsilon \right] 
&= \exp\left( -\frac{1}{2} \int_0^1 f(s,s)\,ds \right) \\
&= \exp\left( -\frac{d_H}{2} \int_0^1 \partial_x b(\phi_s)\,ds \right),
\end{align*}
where
\begin{align*}
    d_H&= b_H\Gamma(1-\alpha) \\
       &= \Gamma(1/2+H)  \sqrt{\frac{2H}{(1-2H)\mathrm{B}(1-2H,H+\tfrac{1}{2})}} \\
       &=  \sqrt{\frac{2H \Gamma(1/2+H)\Gamma(3/2-H) }{(1-2H)\Gamma(1-2H)}}\\
       &=  \sqrt{\frac{2H \Gamma(1/2+H)\Gamma(3/2-H) }{\Gamma(2-2H)}}.
\end{align*}
It only remains to study the behaviour of the term \(C_3\). 
 Define 
\[
M_t=\int_0^t s^{-\alpha}I_{0^+}^\alpha s^{\alpha}  \sigma_s^{-1} R_sdW_s,
\]
which is a martingale due to the adaptedness of $R_s$. We have that
its quadratic variation satisfies
\begin{equation}
    \langle M \rangle_t=\int_0^t  s^{-\alpha}I_{0^+}^\alpha s^{\alpha}  \sigma_s^{-1} R_s ds \leq C_R\varepsilon^4.
\end{equation}
Applying the exponential martingale inequality \eqref{exponential ineq}, we obtain
\begin{equation*}
 			\mathbb{P}\left(\left|\int_0^1 s^{\alpha}D_{0^+}^\alpha s^{-\alpha} R_sdW_s\right|> \xi,  \left\| \int_0^\cdot \sigma_s\, dB^H_s \right\|_\beta \leq \varepsilon\right)
 	\leq \exp\left(-\frac{\xi^2}{2k'\varepsilon^4}\right).
	\end{equation*}
    Combining the small ball probability estimate \eqref{smbp}, we obtain the tail estimate for the conditional probability:
    \begin{align*}
        &\mathbb{P}\left(\left|\int_0^1 s^{\alpha}D_{0^+}^\alpha s^{-\alpha}R_sdW_s\right|>\xi\ \middle| \ \left\| \int_0^\cdot \sigma_s\, dB^H_s \right\|_\beta \leq \varepsilon\right)\\
 	\leq & \exp\left(-\frac{\xi^2}{2k'\varepsilon^4}\right)\exp(K_2\varepsilon^\frac{-1} {H-\beta}) .
    \end{align*}
Using above inequality, we have
\begin{equation*}
		\lim_{\varepsilon\to 0} \mathbb{E}\left(\exp(C_3)\ \middle| \ \left\| \int_0^\cdot \sigma_s\, dB^H_s \right\|_\beta \leq \varepsilon\right)\leq 1.
	\end{equation*}
For a detailed proof, we refer to \cite{Nualart,zhu2025onsagermachlupfunctionaldistributiondependent}.

Combining the above results, the functional takes the form  
\[
J(\phi,\dot{\phi}) = -\frac{1}{2} \int_0^1 \left(\dot{\phi}_s- s^{-\alpha} I_{0^+}^\alpha s^{\alpha} \sigma_s^{-1}   b_s(\phi_s)  \right)^2+d_H\partial_x b_s(\phi_s) \, ds 
.
\]

\end{proof}

\subsection{Regular case }
In this section we will compute the Onsager-Machlup functional for $ \frac{1}{2}<H<1.$
\begin{theorem} \label{result2}  Let $X$ be the solution of \eqref{fir}, and assume that the coefficients satisfy Assumption \((A)\).
For any $\phi \in K_H^\sigma(L^2([0,1]))$ with $\phi_0 = x_0$, when $ \tfrac{1}{2} < H < 1$, 
the Onsager--Machlup functional of $X$ with respect to the norms 
$\Vert \cdot \Vert_\beta$, where $H-\tfrac{1}{2}<\beta<H-\tfrac{1}{4}$, 
 can be expressed as 
	\begin{equation*}
	J(\phi,\dot{\phi})=-\frac{1}{2} \int_0^1 \left(\dot{\phi}_s-s^{\alpha}D_{0^+}^\alpha s^{-\alpha}  \sigma_s^{-1} b_s\left(\phi_s\right)\right)^2+d_H b_x(\phi_s)ds,
\end{equation*}
where
\begin{equation*}
    d_H= \sqrt{\frac{2H\Gamma(H+1/2)\Gamma(3/2-H)}{\Gamma(2-2H)}}.
\end{equation*}
\end{theorem}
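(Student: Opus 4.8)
The plan is to follow the architecture of the proof of Theorem~\ref{result1}, substituting the regular-case representations of $(K_H^\sigma)^{-1}$ and $K_H^*$ throughout. We work with the H\"older norm $\Vert\cdot\Vert_\beta$, $H-\tfrac12<\beta<H-\tfrac14$, since the Girsanov step for $H>\tfrac12$ relies on a H\"older bound for the noise. The Girsanov lemma already rewrites the ratio as $\mathbb{E}\big[\exp(A_1+A_2)\mid \Vert\int_0^\cdot\sigma_s\,dB^H_s\Vert_\beta\le\varepsilon\big]$, where now
\[
u_s=\dot\phi_s-s^{\alpha}D^{\alpha}_{0^+}\!\big(s^{-\alpha}\sigma_s^{-1}b_s(\tilde X_s)\big),\qquad \tilde X_s=\phi_s+\int_0^s\sigma_u\,dB^H_u.
\]
First I would dispose of $A_2=-\tfrac12\int_0^1 u_s^2\,ds$: on the conditioning event $\Vert\int_0^\cdot\sigma_s\,dB^H_s\Vert_\beta\to0$ forces $b_s(\tilde X_s)\to b_s(\phi_s)$, so the conditional limit of $\exp(A_2)$ is the deterministic factor $\exp\big(-\tfrac12\int_0^1(\dot\phi_s-s^{\alpha}D^{\alpha}_{0^+}s^{-\alpha}\sigma_s^{-1}b_s(\phi_s))^2\,ds\big)$, i.e.\ the quadratic part of the claimed $J$.

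Next I would split $A_1=B_1+B_2$ with $B_1=-\int_0^1\dot\phi_s\,dW_s$, whose conditional expectation tends to $1$ by Theorem~\ref{estimate1}, and Taylor-expand the drift in $B_2$ to second order, $b_s(\tilde X_s)=b_s(\phi_s)+\partial_x b_s(\phi_s)\int_0^s\sigma_u\,dB^H_u+R_s$, giving $B_2=C_1+C_2+C_3$. The zeroth-order term $C_1$ again contributes $1$ by Theorem~\ref{estimate1}. For $C_3$ I would repeat the singular-case argument: under the H\"older conditioning $|\int_0^s\sigma_u\,dB^H_u|\lesssim\varepsilon$, hence $R_s=O(\varepsilon^2)$, the martingale $M_t=\int_0^t s^{\alpha}D^{\alpha}_{0^+}(s^{-\alpha}\sigma_s^{-1}R_s)\,dW_s$ has quadratic variation $O(\varepsilon^4)$, and combining the exponential martingale inequality~\eqref{exponential ineq} with the small-ball estimate~\eqref{smbp} gives $\limsup_{\varepsilon\to0}\mathbb{E}[\exp(C_3)\mid\cdot]\le1$. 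Here the upper constraint $\beta<H-\tfrac14$ is what keeps the remainder regular enough for its fractional derivative to lie in $L^2$ and for $M$ to be well-defined.

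The decisive term is $C_2$. I would write $\int_0^s\sigma_u\,dB^H_u=\int_0^1(K_H^*\sigma\mathbf{1}_{[0,s]})(u)\,dW_u$ and use the regular-case identity $K_H^* f=c_H\Gamma(\alpha)\,u^{-\alpha}I^{\alpha}_{1^-}(u^{\alpha}f)$ to reduce $C_2$ to a double Wiener integral $\int_0^1\!\int_0^s f(s,u)\,dW_u\,dW_s$; Theorem~\ref{estimate2} then gives the conditional limit $\exp(-\tfrac12\int_0^1 f(s,s)\,ds)$ once $K(f)$ is checked to be nuclear (again using $\beta<H-\tfrac14$ to control the kernel on the diagonal). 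The hard part will be evaluating $f(s,s)=\lim_{u\to s}f(s,u)$: whereas Theorem~\ref{result1} used the smoothing fractional \emph{integral} $I^{\alpha}_{0^+}$, here the outer operator is the fractional \emph{derivative} $D^{\alpha}_{0^+}$, and applying its Weyl representation~\eqref{Weyl} produces a local term together with two nonlocal principal-value terms—exactly the $D_1,D_2,D_3$ split of the singular proof. I expect the $\sigma$-increment estimate $|\sigma_v-\sigma_u|\le[\sigma]_\gamma|v-u|^\gamma$ to annihilate the two nonlocal pieces in the diagonal limit, while the local piece collapses to a Beta integral and yields $f(s,s)=c_H\Gamma(\alpha)\,\partial_x b_s(\phi_s)=d_H\,\partial_x b_s(\phi_s)$. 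A direct manipulation of the Gamma factors confirms $c_H\Gamma(\alpha)=\sqrt{2H\,\Gamma(H+\tfrac12)\Gamma(\tfrac32-H)/\Gamma(2-2H)}$, the \emph{same} constant as in Theorem~\ref{result1}; this coincidence is natural, since passing from $H<\tfrac12$ to $H>\tfrac12$ merely interchanges the roles of fractional integration and differentiation in $(K_H^\sigma)^{-1}$ and $K_H^*$. Assembling the four conditional limits reproduces the stated functional $J(\phi,\dot\phi)$.
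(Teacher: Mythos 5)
Your overall architecture coincides with the paper's proof: Girsanov reduction, the deterministic limit of $A_2$, the split $A_1=B_1+B_2$, Taylor expansion $B_2=C_1+C_2+C_3$, Theorem~\ref{estimate1} for $B_1$ and $C_1$, Theorem~\ref{estimate2} for the double Wiener integral $C_2$, and the exponential-martingale-plus-small-ball argument for $C_3$. The genuine gap is in your treatment of the decisive step, the diagonal limit $f(s,s)$. You predict that, as in Theorem~\ref{result1}, the \emph{local} term of the Weyl representation \eqref{Weyl} ``collapses to a Beta integral'' while the two nonlocal terms are annihilated by the $\sigma$-increment estimate. For $H>\tfrac12$ this is exactly backwards. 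The inner kernel is now
\begin{equation*}
\kappa_s(u)=c_H\Gamma(\alpha)\,u^{-\alpha}I^{\alpha}_{s^-}\!\big(v^{\alpha}\sigma_v\big)(u)
\;\sim\; \frac{c_H}{\alpha}\,\sigma_s\,(s-u)^{\alpha}\longrightarrow 0,\qquad u\to s,
\end{equation*}
because for $H>\tfrac12$ the kernel $K_H(s,u)$ \emph{vanishes} on the diagonal (it blows up only when $H<\tfrac12$). Hence the local Weyl term $D_1\propto h_s\kappa_s(u)$ contributes nothing, and there is no Beta integral to collapse; the entire trace comes from the nonlocal singular-integral terms, in which $\int_0^u(s-r)^{-\alpha-1}dr\sim\tfrac1\alpha(s-u)^{-\alpha}$ exactly compensates the $(s-u)^{\alpha}$ decay of $\kappa_s(u)$. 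If you ran your plan literally (local term survives, nonlocal terms vanish), you would obtain $f(s,s)=0$ and hence no correction term at all.

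The correct accounting is: the frozen-argument singular term gives $\tfrac{c_H}{\alpha\Gamma(1-\alpha)}\,\partial_x b_s(\phi_s)$, while the difference term $\int_u^s (s-r)^{-\alpha-1}\big(h_s\kappa_s(u)-h_r\kappa_r(u)\big)\,dr$ contributes, via the identity $\int_0^1 (1-x^{\alpha})(1-x)^{-\alpha-1}dx=\Gamma(\alpha)\Gamma(1-\alpha)-\tfrac1\alpha$, the additional amount $c_H\big(\Gamma(\alpha)-\tfrac{1}{\alpha\Gamma(1-\alpha)}\big)\partial_x b_s(\phi_s)$; summing yields $f(s,s)=c_H\Gamma(\alpha)\,\partial_x b_s(\phi_s)$. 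Curiously, this is precisely the constant you asserted, and your Gamma-function identity $c_H\Gamma(\alpha)=\sqrt{2H\Gamma(H+\tfrac12)\Gamma(\tfrac32-H)/\Gamma(2-2H)}=d_H$ is correct --- so your final answer matches the theorem even though the mechanism you describe for reaching it would fail. (For comparison, the paper's own proof claims the difference term $D_3$ vanishes, using a bound that reduces to the divergent integral $\int_u^s r^{-\alpha}(s-r)^{-1}dr$, and thus lands on the intermediate constant $\tfrac{c_H}{\alpha\Gamma(1-\alpha)}$, which does not equal the stated $d_H$; the $E_3$-type contribution above is what reconciles the computation with the statement.) One further small correction: the constraint $\beta<H-\tfrac14$ is not needed to make the martingale $M$ well defined; it is needed so that the Gaussian tail $\exp(-\xi^2/(2k'\varepsilon^4))$ coming from $\langle M\rangle\le C\varepsilon^4$ dominates the small-ball factor $\exp\big(K\varepsilon^{-1/(H-\beta)}\big)$ from \eqref{smbp}, i.e.\ so that $4>1/(H-\beta)$.
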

\begin{proof}
According to Girsanov theorem, we have
	\begin{align*}
\mathbb{P}(\Vert X-\phi\Vert_\beta \leq \varepsilon)
&=\mathbb{E}\Big(\exp\Big[ -\int_0^1 u_s dW_s - \frac{1}{2} \int_0^1 u_s^2 ds\Big] I_{\Vert \int_0^t \sigma_s B^H_s\Vert_\beta \leq \varepsilon}\Big) \\
  &=	\mathbb{E}\left(\exp(A_1+A_2)I_{\Vert \int_0^t \sigma_s B^H_s\Vert_\beta \leq \varepsilon}\right),
\end{align*}
where
\begin{align*}
	A_1 &= -\int_0^1  \dot{\phi}_s-s^{\alpha}D_{0^+}^\alpha s^{-\alpha}  \sigma_s^{-1} b_s\left(\phi_s+\int_0^s \sigma_u dB^H_u\right)dW_s,\\
 	A_2 &= -\frac{1}{2}\int_0^1 \left(\dot{\phi}_s-s^{\alpha}D_{0^+}^\alpha s^{-\alpha}  \sigma_s^{-1} b_s\left(\phi_s+\int_0^s \sigma_u dB^H_u\right)\right)^2 ds.
\end{align*}
The fractional derivative part therein satisfies
\begin{align*}
	&D_{0^+}^\alpha s^{-\alpha}  \sigma_s^{-1} b_s\left(\phi_s+\int_0^s \sigma_u dB^H_u\right)\\
	=&\frac{1}{\Gamma(1-\alpha)}\left(\frac{b_s(\phi_s+\int_0^s \sigma_u dB^H_u)}{s^{2\alpha}\sigma_s}+\alpha\int_0^s \frac{\frac{b_s(\phi_s+\int_0^s \sigma_udB^H_u)}{s^\alpha \sigma_s}-\frac{b_r(\phi_r+\int_0^r \sigma_u dB^H_u)}{r^\alpha \sigma_r}}{(s-r)^{\alpha+1}}dr \right)\\
	=&\frac{1}{\Gamma(1-\alpha)}\Bigg(\frac{b_s(\phi_s+\int_0^s \sigma_u dB^H_u)}{s^{2\alpha}\sigma_s}+\alpha\int_0^s \frac{\frac{b_s(\phi_s+\int_0^s \sigma_udB^H_u)}{s^\alpha \sigma_s}-\frac{b_r(\phi_s+\int_0^s \sigma_u dB^H_u)}{r^\alpha \sigma_r}}{(s-r)^{\alpha+1}}dr   \\
	&\quad\quad\quad +\alpha\int_0^s \frac{\frac{b_r(\phi_s+\int_0^s \sigma_udB^H_u)}{r^\alpha \sigma_r}-\frac{b_r(\phi_r+\int_0^r \sigma_u dB^H_u)}{r^\alpha \sigma_r}}{(s-r)^{\alpha+1}}dr  \Bigg)\\
	\to & D_{0^+}^\alpha s^{-\alpha}  \sigma_s^{-1}\left(\phi_s' - b_s\left(\phi_s\right)\right),\quad   \Vert  \int_0^\cdot \sigma_s d B^H_s \Vert  \to 0.
\end{align*}

Therefore, the limit of $\mathbb{E}(\exp(A_2)| I_{\Vert \int_0^\cdot \sigma_s dB^H_s\Vert_\beta \leq \varepsilon})$ is 
\begin{equation*}
	\exp\left(-\frac{1}{2}\int_0^1 \left(\dot{\phi}_s-s^{\alpha}D_{0^+}^\alpha s^{-\alpha}  \sigma_s^{-1}b_s\left(\phi_s\right)\right)^2 ds\right).
\end{equation*} 

Next, we decompose $A_1$ into two parts:
\begin{equation*}
	A_1 = B_1 + B_2,
\end{equation*}
where
\begin{align*}
	B_1 &= -\int_0^1 \dot{\phi}_s  dW_s, \\
	B_2 &= \int_0^1 s^{\alpha} D_{0^+}^\alpha s^{-\alpha} \sigma_s^{-1} b_s \left( \phi_s + \int_0^s \sigma_u  dB^H_u \right) dW_s.
\end{align*}
By Theorem~\ref{estimate1}, we obtain
\begin{equation*}
	\lim_{\varepsilon \to 0} \mathbb{E} \left[ \exp(B_1) \,\middle|\, I_{\left\| \int_0^t \sigma_s B^H_s \right\|_\beta \leq \varepsilon} \right] = 1.
\end{equation*}
Applying Taylor's expansion, we further expand $B_2$ as:
\begin{align*}
	B_2 &= \int_0^1 s^{\alpha} D_{0^+}^\alpha s^{-\alpha} \sigma_s^{-1} \left( b_s(\phi_s) + \partial_x b_s(\phi_s) \int_0^s \sigma_u  dB^H_u + R_s \right) ds \\
	    &= C_1 + C_2 + C_3,
\end{align*}
where
\begin{align*}
    C_1 &= \int_0^1 s^{\alpha} D_{0^+}^\alpha s^{-\alpha} \sigma_s^{-1} b_s(\phi_s)  ds, \\
    C_2 &= \int_0^1 s^{\alpha} D_{0^+}^\alpha s^{-\alpha} \sigma_s^{-1} \partial_x b_s(\phi_s) \left( \int_0^s \sigma_u  dB^H_u \right) ds,\\
    C_3 &= \int_0^1 s^{\alpha} D_{0^+}^\alpha s^{-\alpha} \sigma_s^{-1} R_s  ds.
\end{align*}
It is straightforward to show that
\begin{equation*}
	\lim_{\varepsilon \to 0} \mathbb{E} \left[ \exp(C_1) \,\middle|\, I_{\left\| \int_0^\cdot \sigma_s B^H_s \right\|_\beta \leq \varepsilon} \right] = 1.
\end{equation*}
Next, 
\begin{align*}
	C_2
	&=\int_0^1 s^{\alpha}D_{0^+}^\alpha \big(s^{-\alpha} \sigma_s^{-1}\partial_x b_s(\phi_s)\big)\int_0^s \sigma_u\, dB^H_u \, dW_s   \\ 
	&=  \int_0^1 s^{\alpha}D_{0^+}^\alpha \big(s^{-\alpha} \sigma_s^{-1}\partial_x b_s(\phi_s)\big)\left(\int_0^1 (K_H^*\sigma I_{[0,s]})(u)\, dW_u\right) dW_s \\ 
	&=\int_0^1 s^{\alpha}D_{0^+}^\alpha \big(s^{-\alpha} \sigma_s^{-1}\partial_x b_s(\phi_s)\big)\left(\int_0^s c_H\Gamma(\alpha)u^{-\alpha}I_{s^-}^\alpha (u^\alpha  \sigma_u)\, dW_u \right)dW_s \\ 
	&=\int_0^1  \frac{c_H\Gamma(\alpha)}{\Gamma(1-\alpha)} 
	\Bigg(
	 		s^{-\alpha}\sigma_s^{-1}\partial_x b_s(\phi_s)
	 		\int_0^s u^{-\alpha}I_{s^-}^\alpha (u^\alpha  \sigma_u)\, dW_u  \\ 
	 		&\qquad+\alpha s^\alpha\int_0^s \frac{ 
	 		\big(s^{-\alpha}\sigma_s^{-1}\partial_x b_s(\phi_s)\int_0^s u^{-\alpha}I_{s^{-}}^\alpha (u^\alpha  \sigma_u)\, dW_u\big) 
		}{(s-r)^{\alpha+1}}\,dr  \\ 
        &\qquad-\alpha s^\alpha\int_0^s \frac{ 
	 		\big(r^{-\alpha}\sigma_r^{-1}\partial_x b_r(\phi_r)\int_0^r u^{-\alpha}I_{r^-}^\alpha (u^\alpha  \sigma_u)\, dW_u\big) 
		}{(s-r)^{\alpha+1}}\,dr \Bigg)\, dW_s \\ 
		&=\int_0^1 \int_0^s f(s,u)\, dW_u dW_s, 
\end{align*}
where   
\begin{align*}
	&f(s,u)\\
    =&\frac{c_H\Gamma(\alpha)}{\Gamma(1-\alpha)}	
	\Bigg(
	 		s^{-\alpha}\sigma_s^{-1}\partial_x b_s(\phi_s)u^{-\alpha}I_{s^-}^\alpha (u^\alpha  \sigma_u)  \\
	 		&\quad+\int_0^s \frac{\alpha \sigma_s^{-1}\partial_x b_s(\phi_s)u^{-\alpha}I_{s^-}^\alpha (u^\alpha \sigma_u)}{(s-r)^{\alpha+1}}\,dr
		 -\int_u^s  \frac{\alpha s^\alpha r^{-\alpha} \sigma_r^{-1}\partial_x b_r
         (\phi_r) u^{-\alpha}I_{r^-}^\alpha (u^\alpha \sigma_u) }{(s-r)^{\alpha+1}}\,dr \Bigg) \\
		=:&D_1+D_2+D_3.
\end{align*}

In particular,
\begin{align*}
    D_1 &= \frac{c_H\Gamma(\alpha)}{\Gamma(1-\alpha)} s^{-\alpha}\sigma_s^{-1}\partial_x b(\phi_s)u^{-\alpha}I_{s^-}^\alpha (u^\alpha \sigma_u),   \\
    D_2 &=  \frac{c_H\Gamma(\alpha)}{\Gamma(1-\alpha)} \alpha s^\alpha\int_0^u \frac{ s^{-\alpha}\sigma_s^{-1}\partial_x b(\phi_s)u^{-\alpha}I_{s^-}^\alpha (u^\alpha \sigma_u)}{(s-r)^{\alpha+1}}\,dr, \\
    D_3 &=  \frac{c_H\Gamma(\alpha)}{\Gamma(1-\alpha)} \alpha s^\alpha  \\
    &\quad \int_u^s \frac{ 
		\big(s^{-\alpha}\sigma_s^{-1}\partial_x b_s(\phi_s)u^{-\alpha}I_{s^-}^\alpha (u^\alpha\sigma_u)\big) 
		-\big(r^{-\alpha}\sigma_r^{-1}\partial_x b_r(\phi_r)u^{-\alpha}I_{r^-}^\alpha (u^\alpha\sigma_u)\big)}
		{(s-r)^{\alpha+1}}\,dr . 
\end{align*}

Since 
\begin{align*}
	\lim_{u\to s} I_{s^-}^\alpha (u^\alpha  \sigma_u) 
	&=\lim_{u\to s}\frac{1}{\Gamma(\alpha)}\int_u^s (v-u)^{\alpha-1} v^\alpha  \sigma_v\,dv\\
	&=\lim_{u\to s}\frac{\sigma_s s^\alpha}{\Gamma(\alpha)}\int_u^s (v-u)^{\alpha-1}\,dv\\
	&=\frac{\sigma_s s^\alpha}{\alpha\Gamma(\alpha)}\lim_{u\to s} (s-u)^\alpha\\
	&=0,
\end{align*}
we obtain $D_1(s,s)=0.$

Moreover, as $u\to s$,
\begin{align*}
	\lim_{u\to s}D_2(s,u)
	&= \frac{c_H\Gamma(\alpha)}{\Gamma(1-\alpha)}\lim_{u\to s}\int_0^u \frac{\alpha \sigma_s^{-1}\partial_x b_s(\phi_s)u^{-\alpha}I_{s^-}^\alpha (u^\alpha \sigma_u)}{(s-r)^{\alpha+1}}\,dr\\
	&= \frac{c_H}{\alpha\Gamma(1-\alpha)}\partial_x b(\phi_s),
\end{align*}
hence $D_2(s,s)=\dfrac{c_H}{\alpha\Gamma(1-\alpha)}\partial_x b_s
(\phi_s)$.

Next, decompose $D_3(s,u)$ as
\begin{align*}
	& D_3(s,u)\\ 
    =& \frac{c_H\Gamma(\alpha)}{\Gamma(1-\alpha)} \alpha s^\alpha\int_u^s \frac{ 
	\big(s^{-\alpha}\sigma_s^{-1}\partial_x b_s(\phi_s)u^{-\alpha}I_{s^-}^\alpha (u^\alpha\sigma_u)\big) - \big(r^{-\alpha}\sigma_r^{-1}\partial_x b_r(\phi_r)u^{-\alpha}I_{r^-}^\alpha (u^\alpha\sigma_u)\big)}
		{(s-r)^{\alpha+1}}\,dr \\
		=&: E_1+E_2+E_3,
\end{align*}
where
\begin{align*}
	E_1(s,u)&=\frac{c_H\Gamma(\alpha)}{\Gamma(1-\alpha)} \alpha s^\alpha\int_u^s \frac{ (s^{-\alpha}- r^{-\alpha})\sigma_s^{-1}\partial_x b_s(\phi_s)u^{-\alpha}I_{s^-}^\alpha (u^\alpha \sigma_u)}
		{(s-r)^{\alpha+1}}\,dr,\\
    E_2(s,u)&=\frac{c_H\Gamma(\alpha)}{\Gamma(1-\alpha)} \alpha s^\alpha\int_u^s \frac{ r^{-\alpha}\big(\sigma_s^{-1}\partial_x b_s(\phi_s)-\sigma_r^{-1}\partial_x b_r(\phi_r)\big)u^{-\alpha}I_{s^-}^\alpha (u^\alpha \sigma_u)}
		{(s-r)^{\alpha+1}}\,dr,\\
    E_3(s,u)&=\frac{c_H\Gamma(\alpha)}{\Gamma(1-\alpha)} \alpha s^\alpha\int_u^s \frac{ r^{-\alpha}\sigma_r^{-1}\partial_x b_r(\phi_r)\big(I_{s^-}^\alpha (u^\alpha\sigma_u)- I_{r^-}^\alpha (u^\alpha\sigma_u)\big)}
		{(s-r)^{\alpha+1}}\,dr.
\end{align*}

Observe that
\begin{align*}
    &\lim_{u\to s} \left|E_1(s,u)\right|\\
    =&\frac{c_H\Gamma(\alpha)}{\Gamma(1-\alpha)}\lim_{u\to s}\left| \alpha  \sigma_s^{-1}\partial_x b_s(\phi_s)u^{-\alpha}I_{s^-}^\alpha (u^\alpha\sigma_u)\int_u^s \frac{ s^{-\alpha}- r^{-\alpha}}{(s-r)^{\alpha+1}}\,dr\right|\\
    \leq & \frac{c_H\Gamma(\alpha)}{\Gamma(1-\alpha)}\lim_{u\to s} \left|\alpha  \sigma_s^{-1}\partial_x b_s(\phi_s)u^{-\alpha}\int_u^s (v-u)^{\alpha-1}v^\alpha\sigma_v\,dv\cdot \frac{\alpha}{1-\alpha}u^{-\alpha-1}(s-u)^{1-\alpha}\right| \\
   =&0.
\end{align*}

Moreover, we analyze the limits of \(E_2\) and \(E_3\) as \(u\to s\).

For \(E_2\), we have
\begin{align*}
    |E_2(s,u)|
    &\le \frac{c_H\Gamma(\alpha)}{\Gamma(1-\alpha)}\alpha s^\alpha
    \int_u^s \frac{ r^{-\alpha}\, \big|\sigma_s^{-1}\partial_x b_s(\phi_s)-\sigma_r^{-1}\partial_x b_r(\phi_r)\big|
    \,\big|u^{-\alpha}I_{s^-}^\alpha(u^\alpha\sigma_u)\big|}{(s-r)^{\alpha+1}}\,dr\\
    &\le C\,\sup_{r\in[u,s]}\big|\sigma_s^{-1}\partial_x b_s(\phi_s)-\sigma_r^{-1}\partial_x b_r(\phi_r)\big|
    \int_u^s \frac{(s-u)^\alpha}{(s-r)^{\alpha+1}}\,dr,
\end{align*}
where we used the bound
\[
\big|u^{-\alpha}I_{s^-}^\alpha(u^\alpha\sigma_u)\big|
= \frac{1}{\Gamma(\alpha)}\Big|\int_u^s (v-u)^{\alpha-1} v^\alpha\sigma_v\,dv\Big|
\le C (s-u)^\alpha,
\]
with a constant \(C\) depending on \(\sup_{v\in[0,1]} v^\alpha\sigma_v\). Since
\[
\int_u^s \frac{(s-u)^\alpha}{(s-r)^{\alpha+1}}\,dr = (s-u)^\alpha\int_u^s (s-r)^{-(\alpha+1)}dr
= \frac{(s-u)^\alpha}{\alpha}(s-u)^{-\alpha}=\frac{1}{\alpha},
\]
we obtain the estimate
\[
|E_2(s,u)| \le \frac{C}{\alpha}\,\sup_{r\in[u,s]}\big|\sigma_s^{-1}\partial_x b_s(\phi_s)-\sigma_r^{-1}\partial_x b_r(\phi_r)\big|.
\]
By the continuity of \(\sigma^{-1}\partial_x b_\cdot(\phi_\cdot)\) it follows that
\[
\lim_{u\to s} E_2(s,u)=0.
\]

For \(E_3\) we write the integrand difference of fractional integrals and use continuity of the fractional integral in the upper limit. Indeed,
\begin{align*}
    |E_3(s,u)|
    &\le \frac{c_H\Gamma(\alpha)}{\Gamma(1-\alpha)}\alpha s^\alpha
    \int_u^s \frac{ r^{-\alpha}\,|\sigma_r^{-1}\partial_x b_r(\phi_r)|\,
    \big|I_{s^-}^\alpha(u^\alpha\sigma_u)- I_{r^-}^\alpha(u^\alpha\sigma_u)\big|}{(s-r)^{\alpha+1}}\,dr.
\end{align*}
Using the representation
\[
I_{s^-}^\alpha(u^\alpha\sigma_u)- I_{r^-}^\alpha(u^\alpha\sigma_u)
= \frac{1}{\Gamma(\alpha)}\int_r^s (v-u)^{\alpha-1} v^\alpha\sigma_v\,dv,
\]
we obtain the bound
\[
\big|I_{s^-}^\alpha(u^\alpha\sigma_u)- I_{r^-}^\alpha(u^\alpha\sigma_u)\big|
\le C (s-r)^\alpha,
\]
with \(C\) independent of \(u,r\) (for \(u\) close to \(s\)). Hence
\begin{align*}
    |E_3(s,u)|
    &\le C' \int_u^s \frac{ r^{-\alpha} (s-r)^\alpha}{(s-r)^{\alpha+1}}\,dr
    = C' \int_u^s \frac{r^{-\alpha}}{s-r}\,dr.
\end{align*}
The integral on the right is finite for \(u<s\), and, moreover, by the dominated convergence and continuity of the prefactor \(r^{-\alpha}\sigma_r^{-1}\partial_x b_r(\phi_r)\), we conclude
\[
\lim_{u\to s} E_3(s,u)=0.
\]

Consequently, combining the three pieces \(E_1,E_2,E_3\) we have
\[
\lim_{u\to s} D_3(s,u)=0.
\]

Therefore, gathering previous results,
\[
D_1(s,s)=0,\qquad D_2(s,s)=\frac{c_H}{\alpha\Gamma(1-\alpha)}\partial_x b_s(\phi_s),
\qquad D_3(s,s)=0,
\]
and so
\[
\lim_{u\to s} f(s,u)=\frac{c_H}{\alpha\Gamma(1-\alpha)}\partial_x b(\phi_s).
\]

It follows that  
\begin{align*}
\lim_{\varepsilon \to 0} \mathbb{E} \left[ \exp(C_2) \,\middle|\, \left\| \int_0^t \sigma_s\, dB^H_s \right\|_\beta \leq \varepsilon \right] 
&= \exp\left( -\frac{1}{2} \int_0^1 f(s,s)\,ds \right) \\
&= \exp\left( -\frac{d_H}{2} \int_0^1 \partial_x b_s(\phi_s)\,ds \right),
\end{align*}
where
\begin{equation*}
    d_H= \sqrt{\frac{2H\Gamma(H+1/2)\Gamma(3/2-H)}{\Gamma(2-2H)}}.
\end{equation*}

Finally, we analyze the limiting behavior of the term $C_3$. Since 
\begin{align*}
    R_s&=b_s\left(\phi_s+\int_0^s \sigma_udB^H_u\right)-b_s(\phi_s)-b_s(\phi_s)\int_0^s \sigma_udB^H_u\\
    &=\int_0^1 \int_0^\lambda \partial_{xx}b_s(\mu\int_0^s \sigma_udB^H_u+\phi_s) \left(\int_0^s \sigma_udB^H_u\right)^2 d\mu d\lambda.
\end{align*}
and
\begin{align*}
&|R_s - R_r| \\
\leq &
  \bigg(\int_0^s \sigma_u\, dB^H_u\bigg)^2 
  \int_0^1 \int_0^\lambda 
  \Big|
     \partial_{xx}b_s\!\left(\mu \int_0^s \sigma_u\, dB^H_u + \phi_s\right)\\
     &- \partial_{xx}b_r\!\left(\mu \int_0^r \sigma_u\, dB^H_u + \phi_r\right)
  \Big| \, d\mu\, d\lambda \\
& + 
  \Bigg|
     \bigg(\int_0^s \sigma_u\, dB^H_u\bigg)^2 
     - \bigg(\int_0^r \sigma_u\, dB^H_u\bigg)^2
  \Bigg| 
  \int_0^1 \int_0^\lambda 
     \Big|\partial_{xx}b_s\!\left(\mu \int_0^s \sigma_u\, dB^H_u + \phi_s\right)\Big| 
  \, d\mu\, d\lambda \\[1ex]
\leq  &
  \int_0^1 \int_0^\lambda 
     L_3 \bigg(\int_0^s \sigma_u\, dB^H_u\bigg)^2 
     \Big(
        \mu\Big|\int_0^s \sigma_u\, dB^H_u - \int_0^r \sigma_u\, dB^H_u\Big|
        + |\phi_s - \phi_r|
     \Big)\, d\mu\, d\lambda \\
& +
  \int_0^1 \int_0^\lambda 
     L_3 \bigg(\int_0^s \sigma_u\, dB^H_u\bigg)^2 
     \Big(
        2\|b\|_\infty |s-r| + 2|s-r|^H
     \Big)\, d\mu\, d\lambda \\
& +
  \int_0^1 \int_0^\lambda 
     \mu \|b_{xx}\|_\infty 
     \Bigg|
        \bigg(\int_0^s \sigma_u\, dB^H_u\bigg)^2 
        - \bigg(\int_0^r \sigma_u\, dB^H_u\bigg)^2
     \Bigg| \, d\mu\, d\lambda \\[1ex]
\leq &
  L_3 \Big(
        |\phi_s - \phi_r| 
        + \Big|\int_0^s \sigma_u\, dB^H_u - \int_0^r \sigma_u\, dB^H_u\Big|
        + 2\|b\|_\infty |s-r|
        + 2|s-r|^H
     \Big)
     \bigg(\int_0^s \sigma_u\, dB^H_u\bigg)^2 \\
& + 
  \|b_{xx}\|_\infty 
  \Bigg|
     \bigg(\int_0^s \sigma_u\, dB^H_u\bigg)^2 
     - \bigg(\int_0^r \sigma_u\, dB^H_u\bigg)^2
  \Bigg|,
\end{align*}
we obtain
\begin{align*}
     & |s^{\alpha}D_{0^+}^\alpha s^{-\alpha} \sigma_s^{-1}R_s|\\
     =&\frac{1}{\Gamma(1-\alpha)}  s^{\alpha}\left|\left( \frac{ s^{-\alpha} \sigma_s^{-1}R_s}{s^\alpha} + \alpha \int_0^s \frac{ s^{-\alpha} \sigma_s^{-1}R_s -  r^{-\alpha} \sigma_r^{-1}R_r}{(s - r)^{\alpha+1}}  dr \right)\right|\\
     \leq & C\left| \frac{\varepsilon^2}{s^\alpha}+s^{-\alpha}\int_0^1 (1-x^{-\alpha})(1-x)^{\alpha+1}dx  \right. \\
     &\quad + \left.  \int_0^s \frac{r^{-\alpha} R_s(\sigma_s^{-1}-\sigma_r^{-1})}{(s-r)^{\alpha+1}}dr+\int_0^s \frac{r^{-\alpha}\sigma_r^{-1}(R_s-R_r)}{(s-r)^{\alpha+1}}ds   \right|\\
     \leq & C \varepsilon^2.
\end{align*}

By proceeding in the same way as in the case $H<\tfrac{1}{2}$, we obtain 
\[
\lim_{\varepsilon \to 0} \mathbb{E} \left[ \exp(C_3) \,\middle|\, \left\| \int_0^t \sigma_s\, dB^H_s \right\|_\beta \leq \varepsilon \right] = 1.
\]

Combining the above results, the functional takes the form  
\[
J(\phi,\dot{\phi}) = -\frac{1}{2} \int_0^1 \left(\dot{\phi}_s -s^{\alpha} D_{0^+}^\alpha s^{-\alpha} \sigma_s^{-1}  b_s(\phi_s)  \right)^2 +d_H\partial_x b_s(\phi_s)\, ds .
\]

\end{proof}

\subsection{Standard case }

In this section, we will prove the case $H=1/2$ .
By applying Itô's formula, we can extend the range of the H\"older norm considered in \cite{zhang2024onsagermachlupfunctionalstochasticdifferential}.

\begin{theorem}  \label{result3}
Let $X$ be the solution of \eqref{fir}, and assume that the coefficients satisfy Assumption \((A)\).
For any $\phi \in K_H^\sigma(L^2([0,1]))$ with $\phi_0 = x_0$, when $  H =1/2$, the Onsager--Machlup functional of $X$ with respect to the norms
$\Vert \cdot \Vert_\beta$ with $0<\beta<\frac{1}{2}-\frac{1}{2n}$, and $\Vert \cdot\Vert_\infty$ can be expressed as follows:
	\begin{equation*}
	J(\phi,\phi')=-\frac{1}{2} \int_0^1 \left(\frac{\phi_s' - b_s\left(\phi_s\right)}{\sigma_s}\right)^2+ \partial_xb_s(\phi_s)ds.
	\end{equation*}
    \end{theorem}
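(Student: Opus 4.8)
The plan is to specialize the Girsanov reduction already established to $H=\tfrac12$, where $B^H=W$, the operator $K_H$ is ordinary integration, and $(K_H^\sigma)^{-1}f=\sigma_\cdot^{-1}f'$. Writing $Y_s=\int_0^s\sigma_u\,dW_u$ and $\tilde X_s=\phi_s+Y_s$, one computes $u_s=\dot\phi_s-\sigma_s^{-1}b_s(\tilde X_s)=\sigma_s^{-1}(\phi_s'-b_s(\phi_s+Y_s))$, so the object to evaluate is $\mathbb E[\exp(-\int_0^1 u_s\,dW_s-\tfrac12\int_0^1 u_s^2\,ds)\mid \|Y\|\le\varepsilon]$. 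First I would dispose of the quadratic term: since the conditioning forces $\|Y\|\to 0$ and $b$ is Lipschitz in $x$, we have $u_s=a_s+r_s$ with $a_s=\sigma_s^{-1}(\phi_s'-b_s(\phi_s))$ deterministic and $|r_s|\le C\|Y\|$, whence $-\tfrac12\int u_s^2\,ds=-\tfrac12\int a_s^2\,ds+O(\|Y\|)$, the first term being exactly the quadratic part of $J$ and the remainder contributing a factor $e^{O(\varepsilon)}\to1$ on the conditioning event.

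The core is the stochastic integral, which I would split as $-\int_0^1\dot\phi_s\,dW_s+\int_0^1\sigma_s^{-1}b_s(\phi_s+Y_s)\,dW_s$ and then Taylor-expand in the spatial variable to order $n-1$, $b_s(\phi_s+Y_s)=\sum_{k=0}^{n-1}\tfrac{1}{k!}\partial_x^k b_s(\phi_s)Y_s^k+R_s$ with $|R_s|\le C|Y_s|^n$. The order-zero piece combines with $-\int\dot\phi_s\,dW_s$ into $-\int a_s\,dW_s$, a Wiener integral of a deterministic $L^2$ integrand, whose conditional exponential tends to $1$ by Theorem~\ref{estimate1}. For each $1\le k\le n-1$ I would remove the integral $\int_0^1\sigma_s^{-1}\partial_x^k b_s(\phi_s)Y_s^k\,dW_s$ by Itô's formula applied to $G_k(s,y)=\tfrac{1}{k+1}\sigma_s^{-2}\partial_x^k b_s(\phi_s)\,y^{k+1}$; this requires $\partial_x^k b_s(\phi_s)$ to be differentiable in $s$, which is precisely the time-regularity built into Assumption~(A3) for $1\le k\le n-1$ (and is why $b$ itself, only continuous in $t$, must instead be routed through the measurable-norm step above). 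The Itô correction $\tfrac12\partial_{yy}G_k\,\sigma_s^2$ produces $\tfrac{k}{2}\partial_x^k b_s(\phi_s)Y_s^{k-1}$, so the $k=1$ term yields exactly $-\tfrac12\int_0^1\partial_x b_s(\phi_s)\,ds$ (the trace term, matching $d_H=1$), while the boundary terms $G_k(1,Y_1)$, the drift-in-$s$ terms, and the $k\ge2$ corrections are all $O(\|Y\|)$ or higher and vanish on the conditioning event.

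The main obstacle is the remainder integral $\int_0^1\sigma_s^{-1}R_s\,dW_s$, and it is here that the admissible Hölder range is fixed. I would set $M_t=\int_0^t\sigma_s^{-1}R_s\,dW_s$, a continuous martingale with $\langle M\rangle_1=\int_0^1\sigma_s^{-2}R_s^2\,ds\le C\|Y\|_\infty^{2n}\le C\varepsilon^{2n}$ on $\{\|Y\|\le\varepsilon\}$. Applying the exponential martingale inequality (Theorem~\ref{exponential ineq}, with continuous paths so $K=0$) gives $\mathbb P(|M_1|>\xi,\ \|Y\|\le\varepsilon)\le 2\exp(-\xi^2/(2C\varepsilon^{2n}))$, and dividing by the small-ball lower bound $\mathbb P(\|Y\|_\beta\le\varepsilon)\ge\exp(-c\,\varepsilon^{-1/(1/2-\beta)})$ from \eqref{smbp} yields a conditional tail $\exp(-\xi^2/(2C\varepsilon^{2n})+c\,\varepsilon^{-1/(1/2-\beta)})$. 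Feeding this into Lemma~\ref{nov1} shows $\mathbb E[\exp(\pm M_1)\mid\|Y\|\le\varepsilon]\to1$ exactly when the martingale scale $\varepsilon^{n}$ dominates the small-ball blow-up $\varepsilon^{-1/(2(1/2-\beta))}$, i.e. when $n-\tfrac{1}{2(1/2-\beta)}>0$, which is precisely the stated condition $\beta<\tfrac12-\tfrac1{2n}$; the sup-norm case is identical with $\|Y\|_\infty$ replacing $\|Y\|_\beta$.

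Finally I would assemble the pieces: the deterministic exponent converges to $J(\phi,\phi')=-\tfrac12\int_0^1(\sigma_s^{-1}(\phi_s'-b_s(\phi_s)))^2+\partial_x b_s(\phi_s)\,ds$, while the three surviving stochastic contributions—the Wiener integral $-\int a_s\,dW_s$, the remainder $M_1$, and the genuinely bounded $O(\varepsilon)$ error—each produce a conditional factor tending to $1$. Because conditional expectation does not split over products, I would decouple them by Cauchy--Schwarz (or Hölder), bounding the conditional expectation of the product above and below by products of the individually controlled conditional expectations, and squeeze using the $e^{\pm C\varepsilon}$ bounds on the bounded error. This gives $\lim_{\varepsilon\to0}$ of the ratio equal to $e^{J}$, as claimed; the reliance on Itô's formula rather than the fractional-calculus expansion of the singular and regular cases is exactly what permits the sharper Hölder threshold $\beta<\tfrac12-\tfrac1{2n}$.
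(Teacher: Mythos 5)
Your proposal is correct and follows essentially the same route as the paper's proof: the Girsanov reduction, Taylor expansion of $b$ to order $n-1$, It\^o/integration-by-parts for each power term (with the $k=1$ correction producing the trace $-\tfrac12\int_0^1\partial_x b_s(\phi_s)\,ds$ and the $k\ge 2$ terms vanishing on the conditioning event), and the exponential martingale inequality combined with the small-ball estimate $\exp(-c\,\varepsilon^{-2/(1-2\beta)})$ for the remainder, yielding the same threshold $\beta<\tfrac12-\tfrac1{2n}$. You in fact make explicit two points the paper leaves implicit — the $k=1$ trace identification and the decoupling of the conditional expectation of the product via H\"older/Cauchy--Schwarz — but these are refinements of detail, not a different method.
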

	\begin{proof}
   We only need to consider the limiting behavior of the stochastic integral term.  
By Taylor’s expansion, we have
\begin{align*}
	&\int_0^1 \sigma_s^{-1} b_s\!\left(\phi_s+\int_0^s \sigma_u \, dW_u\right) dW_s\\
	=& \int_0^1 \sigma_s^{-1} b_s(\phi_s)\, dW_s + \sum_{k=1}^{n-1} \frac{1}{k!}\int_0^1 \sigma_s^{-1}\partial_x^k b_s(\phi_s) 
	\left(\int_0^s \sigma_u \, dW_u\right)^k dW_s + \int_0^1 R_s \, dW_s .
\end{align*}

For $k \geq 2$, integration by parts yields
\begin{align*}
	&\int_0^1 \sigma_s^{-1}\partial_x^k b_s(\phi_s) 
	\left(\int_0^s \sigma_u \, dW_u\right)^k dW_s \\
	=& \int_0^1 \frac{\partial_x^k b_s(\phi_s)}{(k+1)\sigma_s^2} 
	   d\!\left(\int_0^s \sigma_u \, dW_u\right)^{k+1} - \int_0^1 \frac{k}{2}\, \partial_x^k b_s(\phi_s) 
	   \left(\int_0^s \sigma_u \, dW_u\right)^{k-1} ds \\
	=& \left. \frac{\partial_x^k b_s(\phi_s)}{(k+1)\sigma_s^2} 
	   \left(\int_0^s \sigma_u \, dW_u\right)^{k+1}\right|_{s=1}  - \int_0^1 \left(\int_0^s \sigma_u \, dW_u\right)^{k+1} 
	\frac{d}{ds}\!\left(\frac{\partial_x^k b_s(\phi_s)}{(k+1)\sigma_s^2}\right) ds \\
	&\quad - \int_0^1 \frac{k}{2}\, \partial_x^k b_s(\phi_s) 
	\left(\int_0^s \sigma_u \, dW_u\right)^{k-1} ds \\
    &\longrightarrow\; 0, \quad \varepsilon \to 0 .
\end{align*}

Moreover, the quadratic variation of the remainder satisfies
\begin{equation*}
	\left\langle \int_0^\cdot R_s \, dW_s \right\rangle_t \;\leq\; C \varepsilon^{2n},
\end{equation*}
and hence
\[
\mathbb{P}\!\Bigg(
   \Big|\int_0^1 R_s \, dW_s\Big| > \xi 
   \;\Big|\; \Big\|\int_0^{\cdot}\sigma_s \, dW_s\Big\|_{\beta} \leq \varepsilon
\Bigg)
\;\leq\;
\exp\!\Big(-C_1 \tfrac{\xi^2}{\varepsilon^{2n}} + C_2 \varepsilon^{-2/(1-2\beta)}\Big).
\]

Since $2n > 2/(1-2\beta)$, we require
\[
   \beta < \frac{1}{2} - \frac{1}{2n}.
\]
 Summing up, we obtain:
 \begin{equation*}
	J(\phi,\phi')=-\frac{1}{2} \int_0^1 \left(\frac{\phi_s' - b_s\left(\phi_s\right)}{\sigma_s}\right)^2+ \partial_xb_s(\phi_s)ds.
	\end{equation*}

\end{proof}
\section{Numerical experiments}
\label{Numeriacal experiments}	

In this section, we illustrate and validate our main theoretical results through numerical simulations of a specific stochastic differential equation (SDE). For a given SDE, we determine the most probable transition path from $X_0 = x_0$ to $X_1 = x_1$ using the Onsager–Machlup functional. To rigorously validate our results, we compare the obtained most probable path against simulated sample paths and their empirical mean trajectories.

The necessary condition for the most probable path is characterized by the following Euler–Lagrange equations:

\begin{itemize}
    \item For $1/4 < H < 1/2$:
    \begin{equation*}
    2\left( \frac{d}{ds} + \partial_x b_s(\phi_s^*) \right) 
    \left[ 
    \sigma_s^{-1} s^\alpha I^{\alpha}_{1^-} s^{-\alpha} \left((\phi_s^*)' -  s^{-\alpha}I_{0^+}^\alpha s^\alpha \frac{ b_s(\phi_s^*)}{\sigma_s} \right)
    \right] 
    = d_H \partial_{xx} b_s(\phi_s^*) ,
    \end{equation*}

    \item For $1/2<H < 1$:
    \begin{equation*}
    2\left( \frac{d}{ds} + \partial_x b_s(\phi_s^*) \right) 
    \left[ 
    \sigma_s^{-1} s^{-\alpha} D^{\alpha}_{1^-} s^{\alpha} \left( 
   (\phi_s^*)'-s^\alpha D_{0^+}^\alpha s^{-\alpha} \frac{  b_s(\phi_s^*) }{\sigma_s} \right)
    \right] 
    = d_H \partial_{xx} b_s(\phi_s^*) ,
    \end{equation*}
    \item For $H = 1/2$:
    \begin{equation*}
    2\left( \frac{d}{ds} + \partial_x b_s(\phi_s^*) \right) 
    \left[ 
    \frac{ (\phi_s^*)' - b_s(\phi_s^*) }{\sigma_s^2} 
    \right] 
    - \partial_{xx} b_s(\phi_s^*) = 0.
    \end{equation*}
\end{itemize}

All equations are subject to the boundary conditions:
\[
\phi_0^* = x_0, \quad \phi_1^* = x_1.
\]

We now consider an illustrative example: a double-well system driven by time-varying fractional noise.

\begin{example}
Consider the stochastic differential equation
\begin{equation}\label{example1}
dX_t = \frac{-X_t(X_t^2 - a^2)}{(1+X_t^2)^2}\,dt + c\bigl(2 + \sin(2n\pi t)\bigr)\,dB^H_t,
\end{equation}
where $a > 0$, $n \in \mathbb{Z}^+$, and $c \in \mathbb{R}\setminus\{0\}$.  
The coefficients of this equation satisfy our theoretical assumptions.

The corresponding noise-free system
\begin{equation}\label{example2}
   \frac{dX_t}{dt} = \frac{-X_t(X_t^2 - a^2)}{(1+X_t^2)^2}
\end{equation}
exhibits stable equilibrium states at $x = \pm a$ and an unstable equilibrium at $x = 0$.  
For system~\eqref{example2}, the solution starting from $-a$ remains at $-a$. 
However, when noise is introduced in system~\eqref{example1}, 
solutions starting from $-a$ can escape this stable equilibrium, 
with some sample paths reaching the other stable equilibrium at $a$. 

We investigate the most probable transition path of system~\eqref{example1} from $X_0 = -a$ to $X_t = a$. 
Fixing $a = 1$, we perform numerical simulations to plot the most probable paths, 
sample paths, and their mean trajectories for different values of $H$, $n$, and $c$. 

The drift term satisfies $|b(x)| = \left|\frac{-x(x^2-1)}{(1+x^2)^2}\right| < \infty$ and 
\begin{align*}
    |\partial_x b(x)| = \left| \frac{x^4-6x^2+1}{(1+x^2)^3} \right| \leq 1.
\end{align*}
It is straightforward to verify that Assumption~\((A)\) is satisfied when \(0 < H \leq \tfrac{1}{2}\). 
Moreover, for \(H = 0.6\) and \(\beta = 0.34\), we have
\begin{equation}
   \frac{(2\beta+1)\Gamma(1+\beta)^2}{9\alpha^2 \Gamma(\beta-\alpha)^2} \approx 1.015 > 1,
\end{equation}
which indicates that Assumption~\((A)\) is also fulfilled in this case.

Thus,  assumption \((A)\) are satisfied for $H = 0.3, 0.5,$ and $0.6$.  
We conduct numerical simulations for these three values of $H$ with parameter pairs $(c,n) = (1,1), (1/2,1),$ and $(1,4)$.

The simulations, illustrated in Figures~\ref{fig1}--\ref{fig3}, yield the following observations:
\begin{enumerate}
    \item The introduction of noise enables the system to transition between the two stable equilibria, a phenomenon that is precluded in the deterministic system.
    \item The smoothness of the sample paths varies significantly with the Hurst parameter $H$. Paths for $H=0.6$ are notably smoother, which is consistent with the higher regularity of the fractional Brownian motion for $H>1/2$. In contrast, paths for $H=0.3$ exhibit more erratic and sharp behavior, reflecting the rougher nature of the noise when $H<1/2$.
    \item The periodic modulation of the noise coefficient, through the $\sin(2n\pi t)$ term, is clearly imprinted on the dynamics. Both the sample paths and the most probable path display oscillations whose frequency correlates directly with the parameter $n$. This is particularly evident when comparing cases with $n=1$ and $n=4$.
    \item The empirical mean of the sample paths converges closer to the theoretically derived most probable path when the noise intensity coefficient $c$ is reduced from $1$ to $1/2$. This can be understood from the perspective of the OM action functional: any deviation from the most probable path $\phi^*$ incurs a higher ``action'' or ``energy cost''. In the small noise regime, paths with significantly higher action are exponentially suppressed. Therefore, the sample paths are statistically concentrated around the minimizer of the action, which is the most probable path.

\end{enumerate}
These results demonstrate that our theoretical framework effectively captures the essential features of the transition dynamics, including the influence of noise regularity (governed by $H$), noise intensity ($c$), and temporal structure.
\end{example}

\begin{figure}[htbp] 
    \centering

    \begin{subfigure}{\textwidth}
        \centering
        \includegraphics[height=0.3\textheight, keepaspectratio]{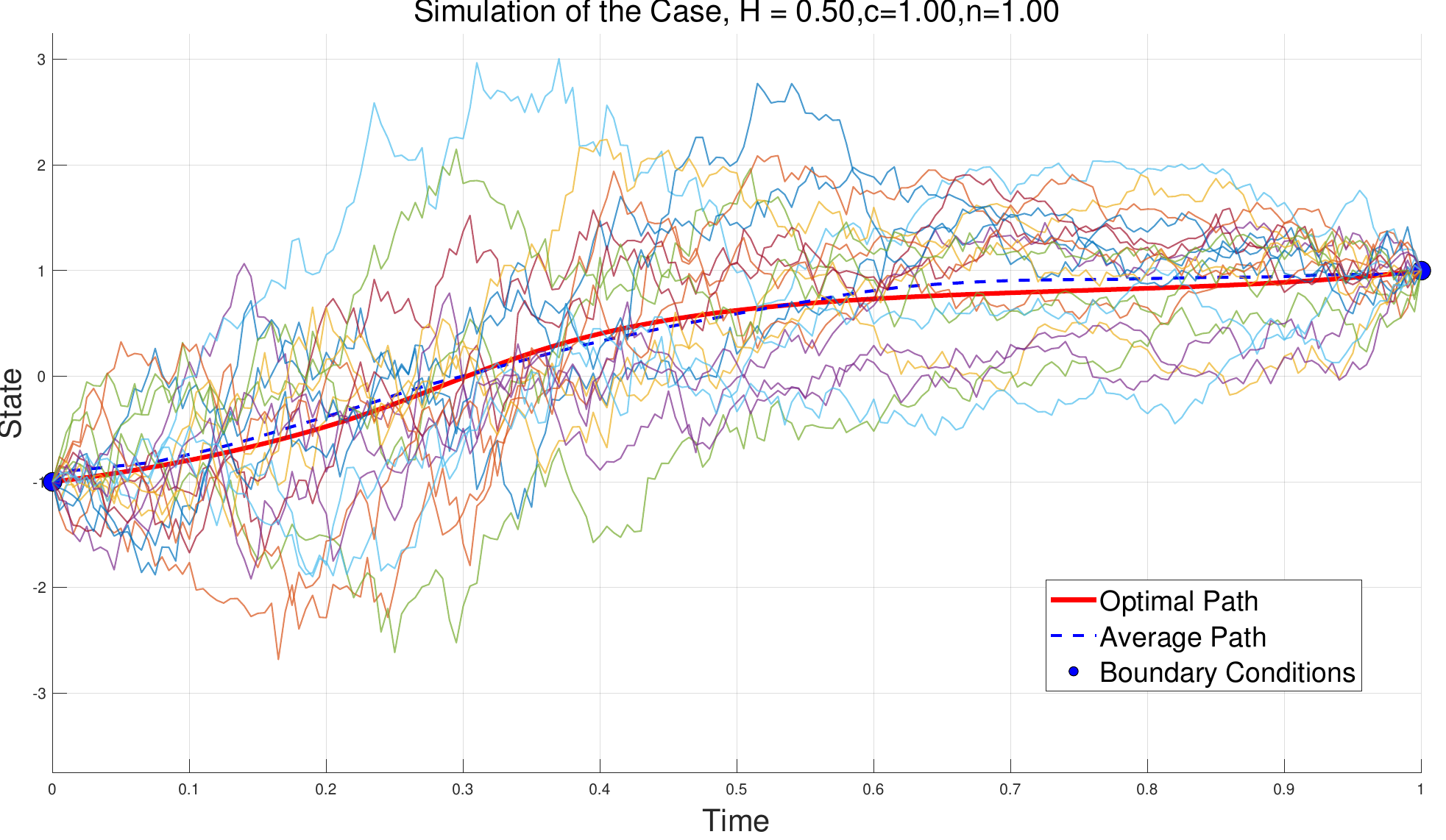}
        \caption{$H=1/2,c=1,n=1$}
    \end{subfigure}

    \vspace{0.2cm}

    \begin{subfigure}{\textwidth}
        \centering
        \includegraphics[height=0.3\textheight, keepaspectratio]{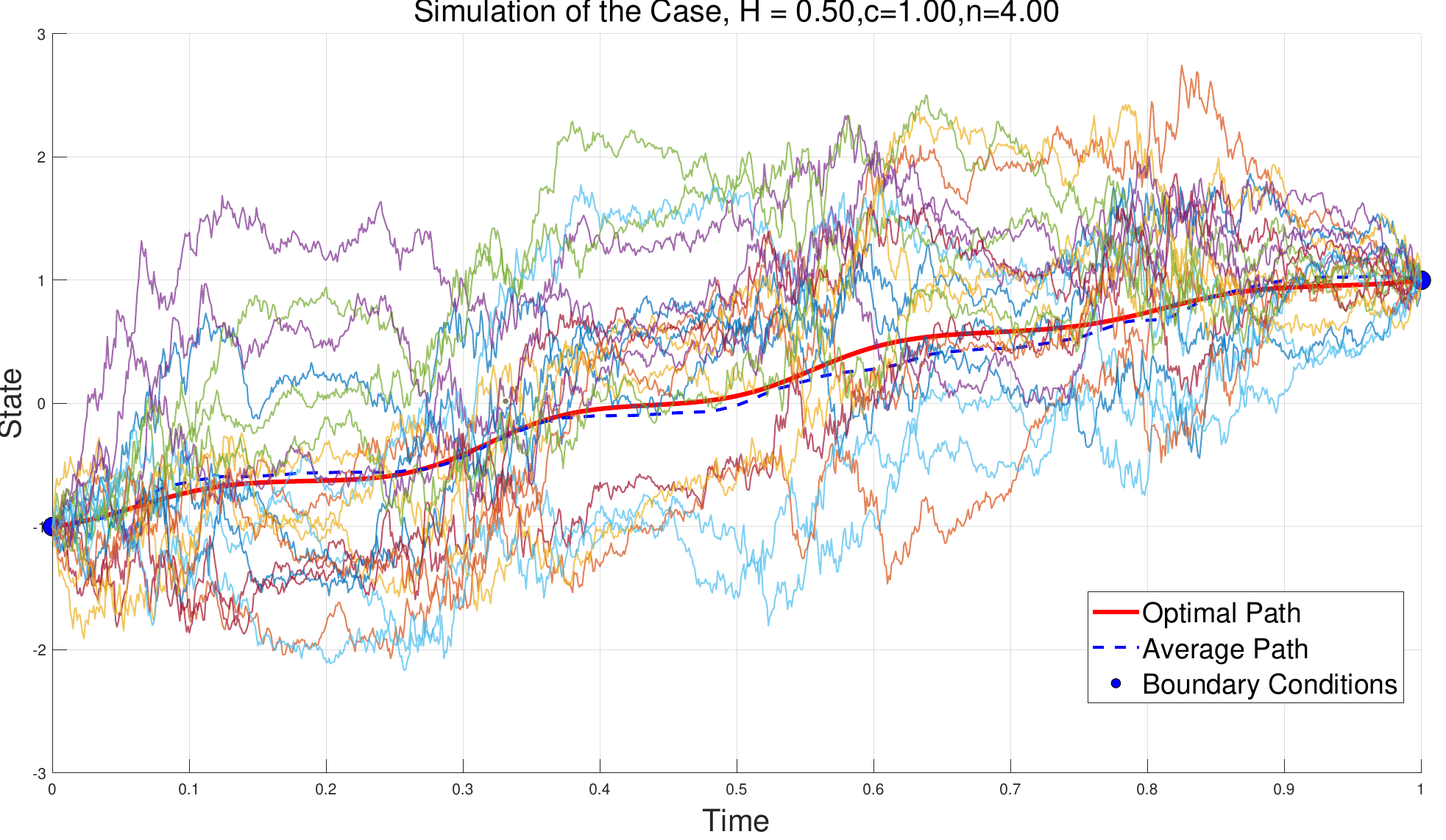}
        \caption{$H=1/2,c=1,n=4$}
    \end{subfigure}

    \vspace{0.2cm}

    \begin{subfigure}{\textwidth}
        \centering
        \includegraphics[height=0.3\textheight, keepaspectratio]{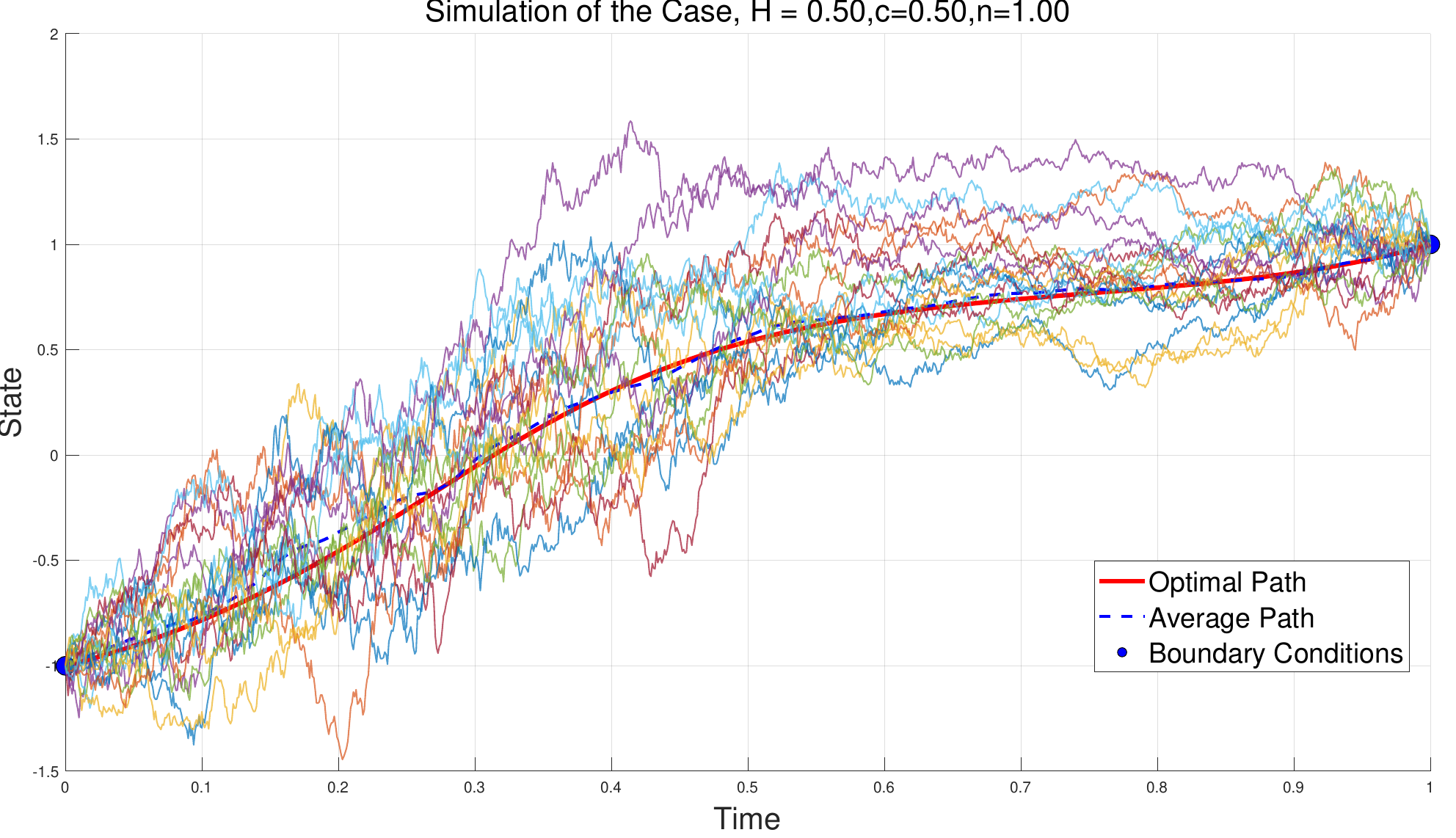}
        \caption{$H=1/2,c=1/2,n=1$}
    \end{subfigure}

    \caption{The standard cases}
    \label{fig1}
\end{figure}

\begin{figure}[htbp]
    \centering

    \begin{subfigure}{\textwidth}
        \centering
        \includegraphics[height=0.3\textheight, keepaspectratio]{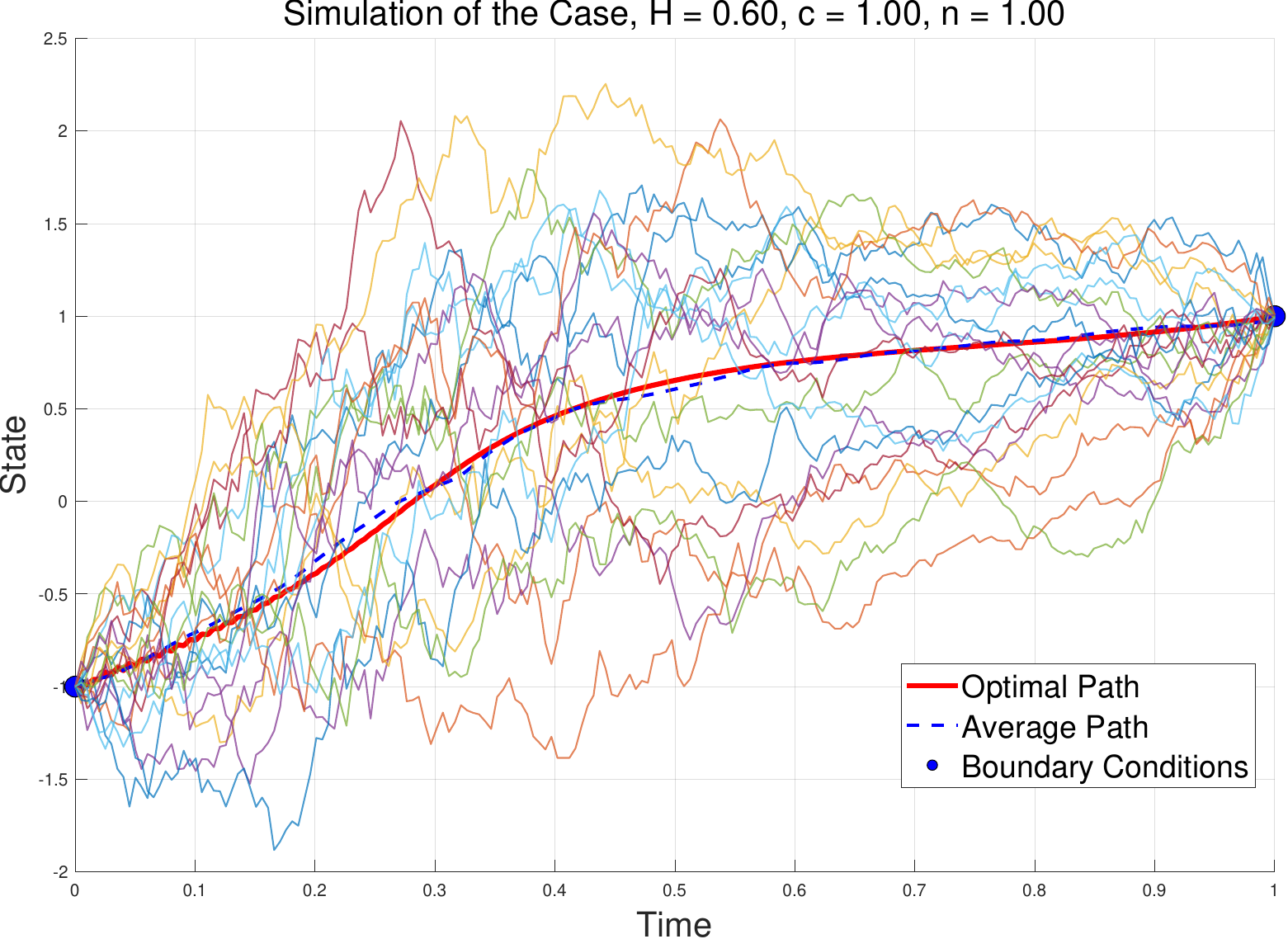}
        \caption{$H=3/5,c=1,n=1$}
    \end{subfigure}

    \vspace{0.2cm}

    \begin{subfigure}{\textwidth}
        \centering
        \includegraphics[height=0.3\textheight, keepaspectratio]{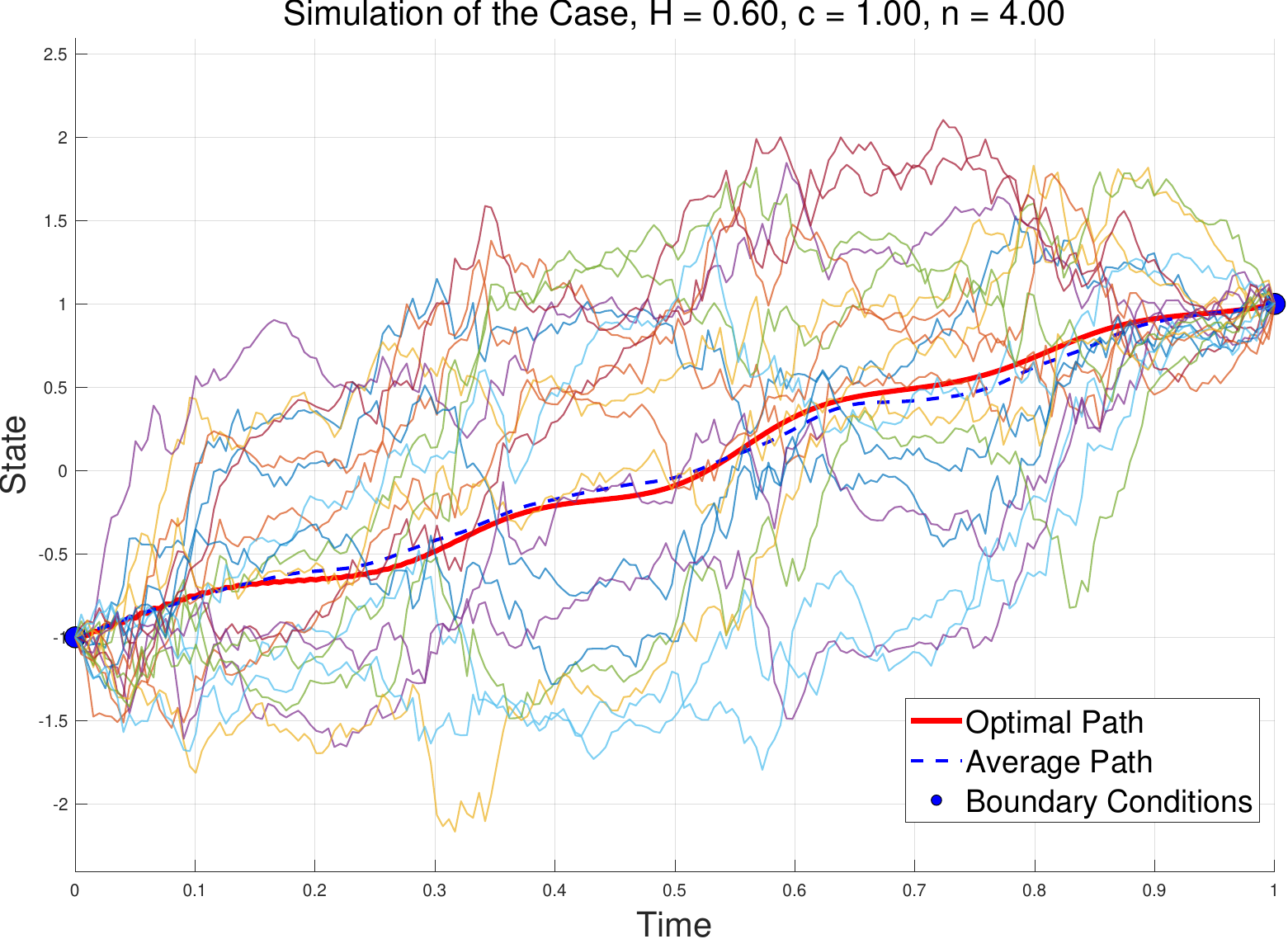}
        \caption{$H=3/5,c=1,n=4$}
    \end{subfigure}

    \vspace{0.2cm}

    \begin{subfigure}{\textwidth}
        \centering
        \includegraphics[height=0.3\textheight, keepaspectratio]{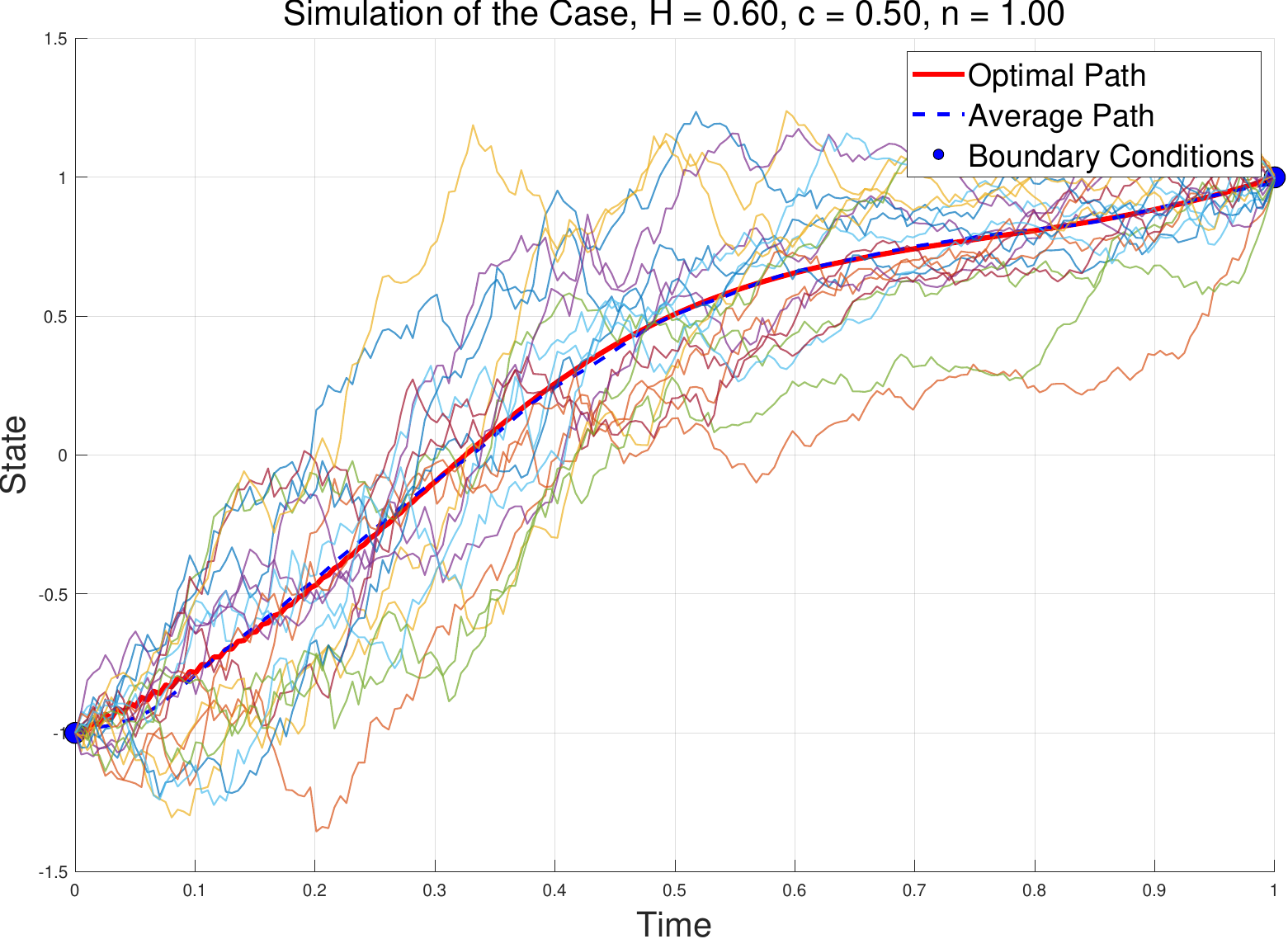}
        \caption{$H=3/5,c=1/2,n=1$}
    \end{subfigure}

    \caption{The regular cases}
    \label{fig2}
\end{figure}

\begin{figure}[htbp]\label{fig3}
    \centering

    \begin{subfigure}{\textwidth}
        \centering
        \includegraphics[height=0.3\textheight, keepaspectratio]{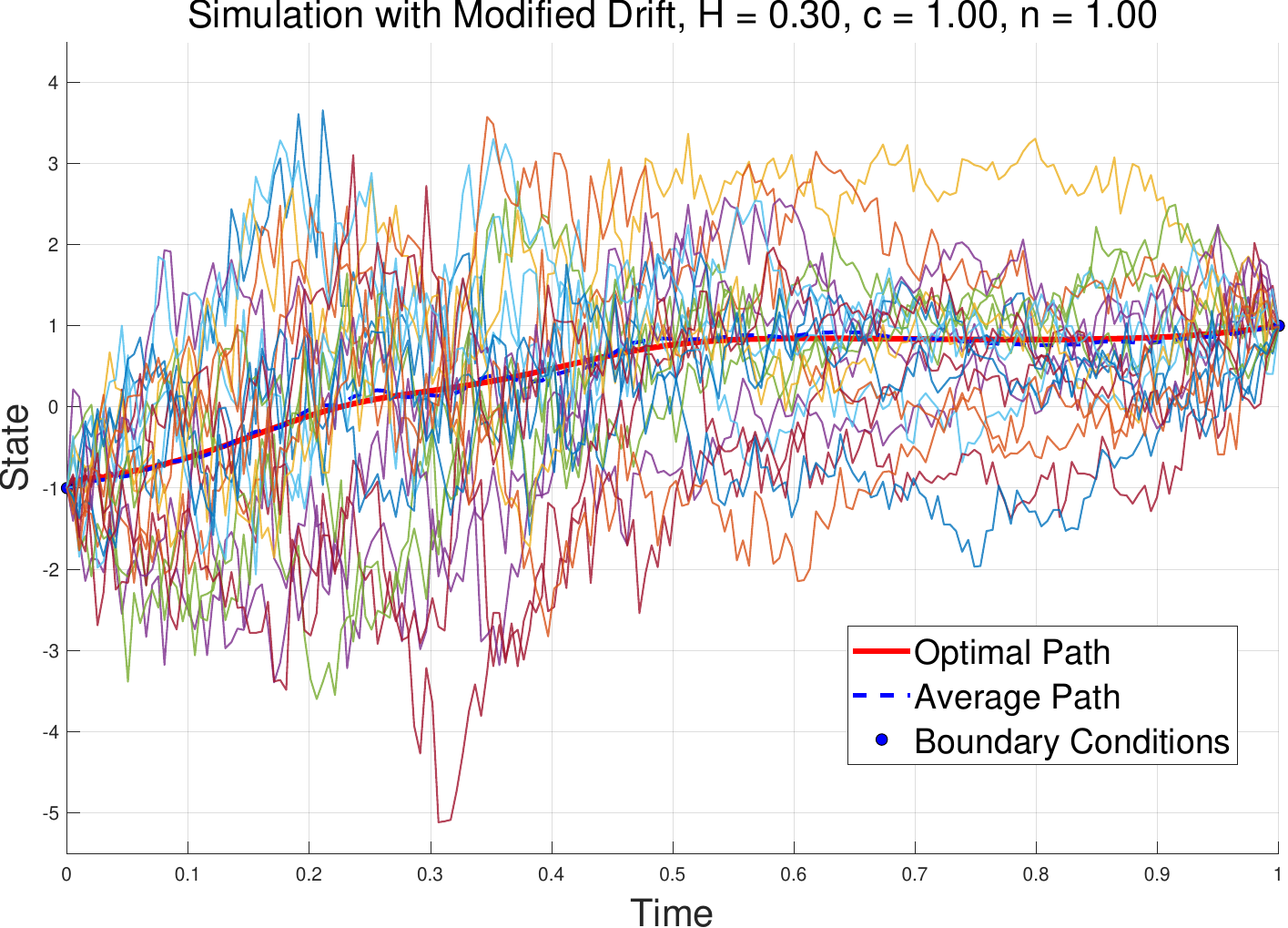}
        \caption{$H=3/10,c=1,n=1$}
    \end{subfigure}

    \vspace{0.2cm}

    \begin{subfigure}{\textwidth}
        \centering
        \includegraphics[height=0.3\textheight, keepaspectratio]{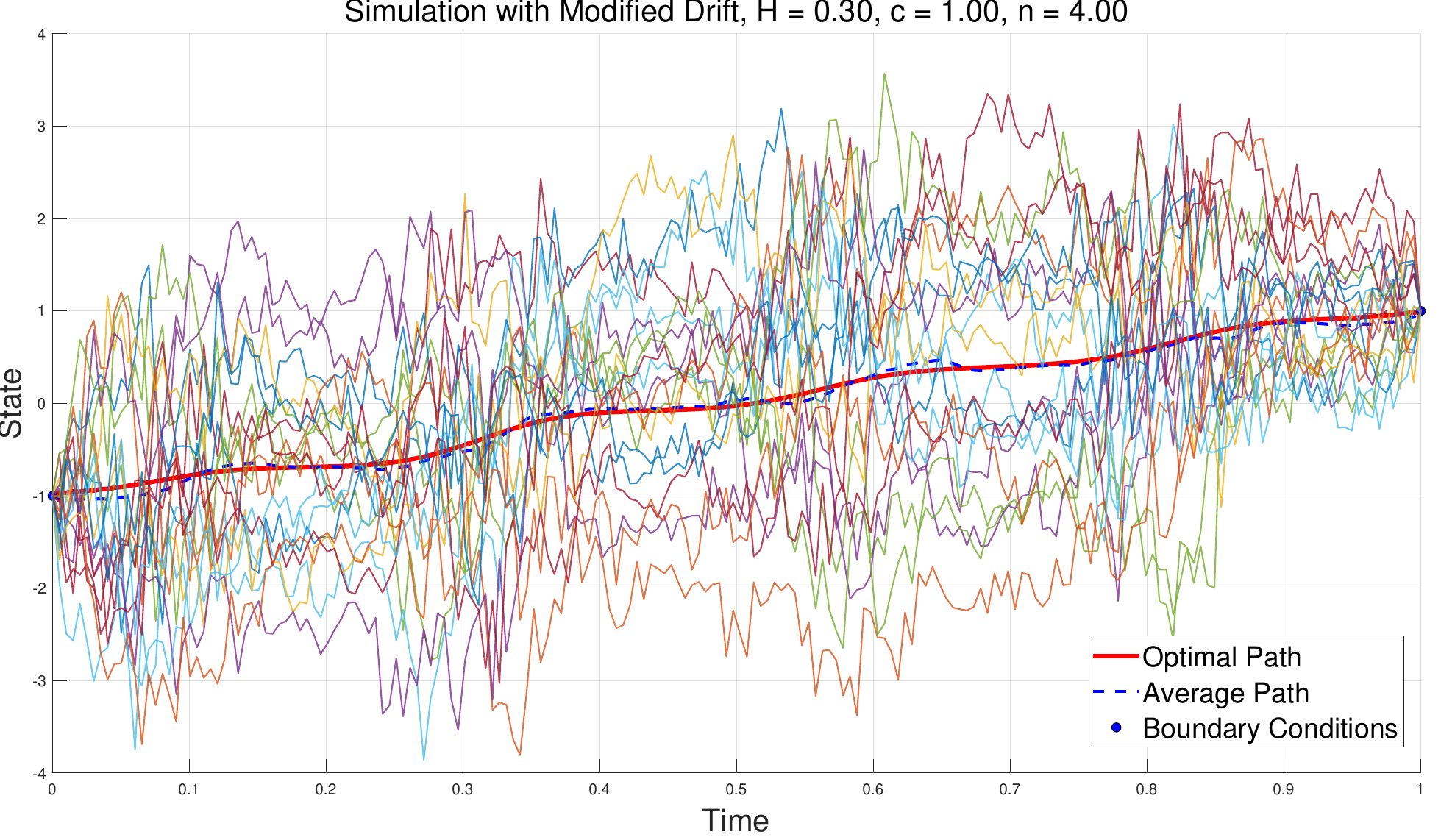}
        \caption{$H=3/10,c=1,n=4$}
    \end{subfigure}

    \vspace{0.2cm}

    \begin{subfigure}{\textwidth}
        \centering
        \includegraphics[height=0.3\textheight, keepaspectratio]{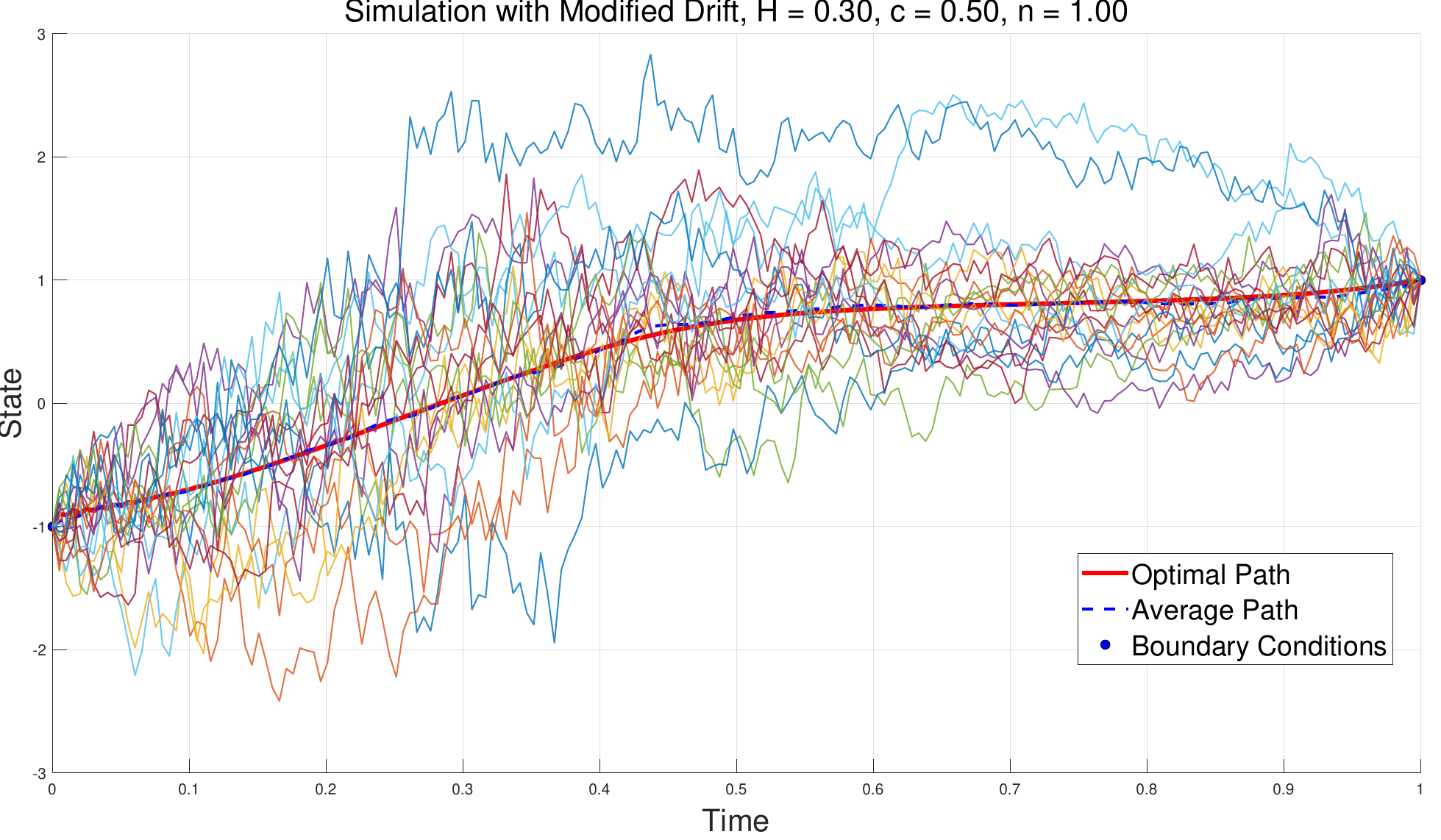}
        \caption{$H=3/10,c=1/2,n=1$}
    \end{subfigure}

    \caption{The singular cases}
    \label{fig3}
\end{figure}

\appendix
\renewcommand{\thesection}{\Alph{section}}
\section{Appendix}

\begin{lemma}[\cite{Nualart}]
     Suppose $H<1/2$. Then $K_H(t,s)$ has the following expression \begin{equation}
         K_H(r,u)=\frac{1}{\Gamma(\alpha)\Gamma(1-2\alpha)}u^{-\alpha}\int_u^r x^\alpha (x-1)^{\alpha-1}(r-x)^{-2\alpha}dx. \label{khdelta}
     \end{equation}
\end{lemma}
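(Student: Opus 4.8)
The plan is to derive the kernel directly from the factorized representation of the operator $K_H$ through Riemann--Liouville fractional integrals. For $H<\tfrac12$ the excerpt records (following \cite[Lemma~10]{Nualart}) that, with $\alpha=\tfrac12-H\in(0,\tfrac12)$,
\[
(K_H h)(r)=\big(I_{0^+}^{1-2\alpha}\, s^{\alpha}\, I_{0^+}^{\alpha}\, s^{-\alpha} h\big)(r).
\]
Since the kernel $K_H(r,u)$ is by definition the function for which $(K_Hh)(r)=\int_0^r K_H(r,u)h(u)\,du$, the task reduces to expanding this composition of three operations into an explicit iterated integral and reading off the coefficient of $h$. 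Note that the orders are admissible: $\alpha>0$ and $1-2\alpha>0$, so both $I_{0^+}^{\alpha}$ and $I_{0^+}^{1-2\alpha}$ are genuine fractional integrals.

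Concretely, I would unwind the operators from the inside out. The inner integral gives $(I_{0^+}^{\alpha}s^{-\alpha}h)(y)=\frac{1}{\Gamma(\alpha)}\int_0^y (y-v)^{\alpha-1}v^{-\alpha}h(v)\,dv$; multiplying by $y^{\alpha}$ and applying the outer operator $I_{0^+}^{1-2\alpha}$ produces the double integral
\[
(K_H h)(r)=\frac{1}{\Gamma(1-2\alpha)\Gamma(\alpha)}\int_0^r (r-y)^{-2\alpha}\, y^{\alpha}\int_0^y (y-v)^{\alpha-1} v^{-\alpha} h(v)\,dv\,dy.
\]
The decisive step is then to interchange the order of integration over the triangle $\{0<v<y<r\}$ by Fubini's theorem: fixing $v$ and letting $y$ range over $(v,r)$, the integral over $y$ becomes the kernel, yielding $K_H(r,u)=\frac{u^{-\alpha}}{\Gamma(\alpha)\Gamma(1-2\alpha)}\int_u^r x^{\alpha}(x-u)^{\alpha-1}(r-x)^{-2\alpha}\,dx$ after relabelling $v\mapsto u$ and $y\mapsto x$, which is exactly \eqref{khdelta}.

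The only point requiring genuine care is justifying the Fubini interchange, since the integrand carries two boundary singularities: $(x-u)^{\alpha-1}$ blows up as $x\downarrow u$ and $(r-x)^{-2\alpha}$ blows up as $x\uparrow r$ (and $v^{-\alpha}$ near $v=0$). All three are integrable precisely because $0<\alpha<\tfrac12$: the exponents $\alpha-1$, $-2\alpha$, and $-\alpha$ all lie in $(-1,0)$, so the full integrand is absolutely integrable on the triangle and Fubini applies. To make the manipulations entirely rigorous I would first establish the identity for $h$ in a dense regular subclass, say $h\in C^1([0,1])$, where the interchange and all endpoint limits are elementary, and then extend to general $h\in L^2([0,1])$ using the continuity of $K_H$ as an operator into $I_{0^+}^{H+\frac12}(L^2([0,1]))$ recorded earlier in the excerpt. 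I expect this integrability bookkeeping to be the main (and essentially the only) obstacle, the algebraic reduction itself being routine.
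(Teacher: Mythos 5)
Your derivation is correct, but note that the paper offers no proof of this lemma at all: it is stated as a citation to \cite{Nualart}, so there is nothing internal to compare against. Your argument --- unwinding the factorization $(K_H h)(r) = \big(I_{0^+}^{1-2\alpha}\, s^{\alpha}\, I_{0^+}^{\alpha}\, s^{-\alpha} h\big)(r)$ quoted in the paper's preliminaries into an iterated integral and applying Fubini over the triangle $\{0<v<y<r\}$ --- is precisely the standard computation underlying that citation, and it has the added value of showing the appendix formula is consistent with the operator representation the paper already uses. Two small remarks. First, your result confirms that the paper's statement contains a typo: the factor $(x-1)^{\alpha-1}$ must read $(x-u)^{\alpha-1}$ (as written it is not even real-valued for $x<1$), and indeed the paper itself uses the kernel in the form $(x-u)^{\alpha-1}$ when it applies \eqref{khdelta} later in the appendix, which matches what you derive. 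Second, your final density step ($h\in C^1$ then extend to $L^2$) is unnecessary: for $h\in L^2([0,1])$ the double integral is already absolutely convergent, since the inner $y$-integral is bounded by $r^{\alpha}\mathrm{B}(\alpha,1-2\alpha)(r-v)^{-\alpha}$ and then
\begin{equation*}
\int_0^r v^{-\alpha}(r-v)^{-\alpha}|h(v)|\,dv \;\le\; \|h\|_{L^2}\left(\int_0^r v^{-2\alpha}(r-v)^{-2\alpha}\,dv\right)^{1/2} < \infty
\end{equation*}
because $2\alpha<1$, so Fubini--Tonelli applies directly to every $h\in L^2$.
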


\begin{lemma}
 Assume $\tfrac{1}{4} < H < \tfrac{1}{2}$. Then the function $f$ defined in \eqref{fsu1} can be rewritten in the following form:
 \begin{align}
  \notag    &f(s,u)\\
      =&\int_u^s  \frac{1}{\Gamma(\alpha)} s^{-\alpha}  (s-r)^{\alpha-1} r^{\alpha} \sigma_r^{-1}\partial_x b_r(\phi_r)\left( \sigma_r  K_H(r,u) -\int_u^r \sigma_u' K_H(v,u)dv   \right)dr. \label{fsu}
 \end{align}
\end{lemma}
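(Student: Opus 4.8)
The plan is to reduce the claimed identity \eqref{fsu} to a pointwise statement about the inner kernel and then establish that statement by a stochastic integration-by-parts argument. Comparing the integrands of \eqref{fsu1} and \eqref{fsu} in the variable $r$, it suffices to prove that, for each fixed $u<r$,
\begin{equation*}
b_H\,\Gamma(1-\alpha)\,u^{\alpha}\,D_{r-}^{\alpha}\!\bigl(u^{-\alpha}\sigma_u\bigr)
= \sigma_r K_H(r,u) - \int_u^r \sigma_v'\,K_H(v,u)\,dv.
\end{equation*}
The left-hand side is precisely $(K_H^{*}(\sigma I_{[0,r]}))(u)$: since $I_{[0,r]}\equiv 1$ on $[u,r]$ and vanishes on $(r,1]$, the Weyl representation of the right fractional derivative shows that $D_{1-}^{\alpha}(u^{-\alpha}\sigma_u I_{[0,r]})$ collapses, for arguments $u<r$, to the truncated operator $D_{r-}^{\alpha}(u^{-\alpha}\sigma_u)$, so that $K_H^{*}$ applied to $\sigma I_{[0,r]}$ reproduces exactly the kernel $b_H\Gamma(1-\alpha)u^{\alpha}D_{r-}^{\alpha}(u^{-\alpha}\sigma_u)$ appearing in \eqref{fsu1}.

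First I would evaluate the Wiener integral $\int_0^r \sigma_v\,dB^H_v$ in two ways. By the definition of $K_H^{*}$ and the representation theorem, $\int_0^r \sigma_v\,dB^H_v=\int_0^r (K_H^{*}(\sigma I_{[0,r]}))(u)\,dW_u$. On the other hand, because $\sigma\in C^1([0,1])$ and the paths of $B^H$ are H\"older continuous of order $H-\epsilon$, so that the exponents sum to more than one, Young's integration-by-parts formula (Lemma~\ref{young}) gives
\begin{equation*}
\int_0^r \sigma_v\,dB^H_v = \sigma_r B^H_r - \int_0^r \sigma_v'\,B^H_v\,dv.
\end{equation*}
Substituting the representations $B^H_r=\int_0^r K_H(r,u)\,dW_u$ and $B^H_v=\int_0^v K_H(v,u)\,dW_u$ and interchanging the order of integration on the simplex $\{0\le u\le v\le r\}$ by the stochastic Fubini theorem, I obtain
\begin{equation*}
\int_0^r \sigma_v\,dB^H_v = \int_0^r \Bigl(\sigma_r K_H(r,u)-\int_u^r \sigma_v'\,K_H(v,u)\,dv\Bigr)dW_u.
\end{equation*}
Matching this with the first representation and using the uniqueness of the Wiener integrand (the isometry of Lemma~\ref{isometry}), the two integrands coincide for a.e.\ $u\in[0,r]$, which is the desired pointwise identity; plugging it into \eqref{fsu1} produces \eqref{fsu}.

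The main obstacle is the careful bookkeeping of the fractional-derivative endpoint together with the justification of the stochastic Fubini step. One must check that replacing the global operator $D_{1-}^{\alpha}$ by the truncated $D_{r-}^{\alpha}$ is legitimate---this is the point at which the indicator $I_{[0,r]}$ enters and kills the boundary contribution on $(r,1]$---and that all kernels are square-integrable so that Lemma~\ref{isometry} indeed forces the integrands to agree. Here the hypothesis $\tfrac14<H<\tfrac12$ is used: it gives $\alpha=\tfrac12-H<\tfrac14$, so that the singularity $K_H(v,u)\sim(v-u)^{-\alpha}$ is integrable in $v$ near $u$ and the inner integral $\int_u^r\sigma_v'K_H(v,u)\,dv$, as well as the full kernel $f$, lies in the appropriate $L^2$ space, rendering the interchange and the matching of integrands valid.
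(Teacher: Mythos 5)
Your proposal is correct and follows essentially the same route as the paper's own proof: both rest on Young's integration by parts $\int_0^r\sigma_v\,dB^H_v=\sigma_r B^H_r-\int_0^r\sigma_v'\,B^H_v\,dv$, the kernel representation $B^H_t=\int_0^t K_H(t,u)\,dW_u$, and a stochastic Fubini interchange. The only difference is organizational: the paper performs these substitutions inside the full double integral $C_2$ and reads off the new kernel, whereas you isolate the pointwise identity $b_H\Gamma(1-\alpha)\,u^{\alpha}D_{r-}^{\alpha}\bigl(u^{-\alpha}\sigma_u\bigr)=\sigma_r K_H(r,u)-\int_u^r\sigma_v'\,K_H(v,u)\,dv$ for the single Wiener integral $\int_0^r\sigma_v\,dB^H_v$ and make the uniqueness step explicit via the It\^o isometry, correctly treating the $\sigma_u'$ in \eqref{fsu} as a typo for $\sigma_v'$.
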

\begin{proof}
 By applying the integration by parts formula, we obtain
  \begin{align*}
    & \int_0^1 s^{-\alpha}I_{0^+}^\alpha s^{\alpha} \sigma_s^{-1}\partial_x b_s(\phi_s)\int_0^s \sigma_u dB^H_u    dW_s  \\
      =& \frac{1}{\Gamma(\alpha)}\int_0^1 s^{-\alpha} \int_0^s (s-r)^{\alpha-1} r^{\alpha} \sigma_r^{-1}\partial_x b_r(\phi_r)\int_0^r \sigma_u dB^H_u  dr dW_s \\
      =& \frac{1}{\Gamma(\alpha)}\int_0^1 s^{-\alpha} \int_0^s (s-r)^{\alpha-1} r^{\alpha} \sigma_r^{-1}\partial_x b_r(\phi_r)\left( \sigma_r B^H_r-\int_0^r \sigma_u' B^H_u du\right) drdW_s\\
       =& \frac{1}{\Gamma(\alpha)}\int_0^1 s^{-\alpha} \int_0^s (s-r)^{\alpha-1} r^{\alpha} \sigma_r^{-1}\partial_x b_r(\phi_r) \sigma_r \int_0^r K_H(r,u)dW_u drdW_s\\
       &\quad  - \frac{1}{\Gamma(\alpha)}\int_0^1 s^{-\alpha} \int_0^s (s-r)^{\alpha-1} r^{\alpha} \sigma_r^{-1}\partial_x b_r(\phi_r) \int_0^r \sigma_u'\int_0^u K_H(u,v)dv\,du\,drdW_s\\
       =&\int_0^1 dW_s\int_0^s dW_u \int_u^s dr  \frac{1}{\Gamma(\alpha)} s^{-\alpha}  (s-r)^{\alpha-1} r^{\alpha} \partial_x b_r(\phi_r)   K_H(r,u)\\
       &-\int_0^1 dW_s\int_0^s dW_v \int_v^s dr \int_v^r du
       \frac{1}{\Gamma(\alpha)} s^{-\alpha}  (s-r)^{\alpha-1} r^{\alpha} \sigma_r^{-1}\partial_x b_r(\phi_r) \sigma_u' K_H(u,v)\\
        =&\int_0^1 dW_s\int_0^s dW_u \int_u^s dr  \frac{1}{\Gamma(\alpha)} s^{-\alpha}  (s-r)^{\alpha-1} r^{\alpha} \partial_x b_r(\phi_r)  K_H(r,u)\\
       &-\int_0^1 dW_s\int_0^s dW_u \int_u^s dr \int_u^r dv
       \frac{1}{\Gamma(\alpha)} s^{-\alpha}  (s-r)^{\alpha-1} r^{\alpha} \sigma_r^{-1}\partial_x b_r(\phi_r) \sigma_v' K_H(v,u).
  \end{align*}
   Therefore,
   \begin{align*}
       f(s,u)&=\int_u^s dr  \frac{1}{\Gamma(\alpha)} s^{-\alpha}  (s-r)^{\alpha-1} r^{\alpha} \partial_x b_r(\phi_r)   K_H(r,u)\\
       &- \int_u^s dr \int_u^r dv
       \frac{1}{\Gamma(\alpha)} s^{-\alpha}  (s-r)^{\alpha-1} r^{\alpha} \sigma_r^{-1}\partial_x b_r(\phi_r) \sigma_v' K_H(v,u)\\
       &=f_1(s,u)+f_2(s,u),
   \end{align*}
   where
   \begin{equation}
       f_1(s,u)=\int_u^s dr  \frac{1}{\Gamma(\alpha)} s^{-\alpha}  (s-r)^{\alpha-1} r^{\alpha} \partial_x b_r(\phi_r)   K_H(r,u)   \label{apeq1}
   \end{equation}
   and
   \begin{equation}\label{apeq2}
       f_2(s,u)=- \int_u^s dr \int_u^r dv
       \frac{1}{\Gamma(\alpha)} s^{-\alpha}  (s-r)^{\alpha-1} r^{\alpha} \sigma_r^{-1}\partial_x b_r(\phi_r) \sigma_v' K_H(v,u).
   \end{equation}
\end{proof}

\begin{lemma}
   Let $\tfrac{1}{4} < H < \tfrac{1}{2}$, and let $f$ be the function defined in \eqref{fsu}. Then the Hilbert–Schmidt operator $K(\tilde{f})$ associated with the symmetrized function $\tilde{f}$ is a kernel operator.
\end{lemma}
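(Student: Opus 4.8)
The plan is to build on the decomposition $f = f_1 + f_2$ recorded in \eqref{apeq1}--\eqref{apeq2}, combined with the explicit kernel \eqref{khdelta}. Since the nuclear operators form a linear subspace (indeed a two-sided ideal) of the bounded operators and the symmetrization $f \mapsto \tilde{f}$ is linear, it suffices to show that the Hilbert--Schmidt operators attached to the symmetrizations of $f_1$ and of $f_2$ are each nuclear. Throughout I would use that the standing hypothesis $\tfrac14 < H < \tfrac12$ is equivalent to $0 < \alpha < \tfrac14$, hence $2\alpha < \tfrac12$; this is precisely the range in which the singular exponents generated by the fractional kernels stay on the integrable side.

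The engine of the argument is the elementary fact that a product of two Hilbert--Schmidt operators is nuclear. For each piece I therefore aim at a factorization $f_i(s,u) = \int_u^s g_i(s,w)\,h_i(w,u)\,dw$ with $g_i, h_i \in L^2([0,1]^2)$, which yields $K(f_i) = K(g_i)\,K(h_i)$ and hence nuclearity, together with the trace identity $\operatorname{Tr} K(\tilde{f}) = \int_0^1 \tilde{f}(s,s)\,ds$. I would first dispose of $f_2$, the easier term: the inner $dv$-integration against $\sigma_v'$ in \eqref{apeq2} supplies one additional order of smoothing relative to $K_H$, so after inserting \eqref{khdelta} and applying Fubini the iterated integral can be split at a single intermediate variable with both factors manifestly in $L^2$, the governing one-dimensional integrals converging already under $2\alpha < 1$.

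The genuinely delicate term is $f_1$. Substituting \eqref{khdelta} into \eqref{apeq1} and interchanging the order of integration, $f_1(s,u)$ becomes an integral over the simplex $u < x < r < s$ whose integrand carries the three singular factors $(s-r)^{\alpha-1}$, $(r-x)^{-2\alpha}$ and $(x-u)^{\alpha-1}$ against the bounded weights $s^{-\alpha} r^{\alpha} x^{\alpha} u^{-\alpha}$ and $\sigma_r^{-1}\partial_x b_r(\phi_r)$. The plan is to integrate out the interior variable so as to pair the mild factor $(r-x)^{-2\alpha}$ with a neighbouring borderline factor: performing the $x$-integration first fuses $(r-x)^{-2\alpha}$ with $(x-u)^{\alpha-1}$ into a factor of order $(r-u)^{-\alpha}$ through a Beta integral $\mathrm{B}(\alpha, 1-2\alpha)$, after which one attempts to organize the remainder as a composition of two integral operators. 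A direct check of the square-integrability of the resulting factors, and of the benign product-type corner singularity $u^{-\alpha}$ (for which $u^{-2\alpha}$ is integrable exactly when $2\alpha < 1$), would then be carried out; the diagonal value must ultimately be reconciled with $f(s,s) = b_H\,\Gamma(1-\alpha)\,\partial_x b_s(\phi_s)$ found in the proof of Theorem~\ref{result1}.

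The step I expect to be the main obstacle is exactly this square-integrability. The two fractional smoothings contribute a combined order of precisely one, so the estimate is borderline and leaves no slack: a naive single split places one factor (carrying the raw $(s-r)^{\alpha-1}$, singular at the top of the simplex) just outside $L^2$, so closing the argument requires exploiting the full cancellation that makes $f_1$ bounded near the diagonal rather than treating the two fractional operators separately. It is the hypothesis $H > \tfrac14$, equivalently $2\alpha < \tfrac12$, that keeps the exponent $-2\alpha$ strictly above $-\tfrac12$ and renders the controlling Beta integrals of type $\mathrm{B}(1-2\alpha, 1-2\alpha)$ finite; for $H \le \tfrac14$ the construction degenerates, which is the structural reason for the standing restriction. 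The careful bookkeeping of the interplay between the diagonal cancellation (where $x^\alpha r^\alpha$ offsets $s^{-\alpha} u^{-\alpha}$) and the corner behaviour at $u = 0$ is where the real work of the proof lies.
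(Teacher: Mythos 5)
Your overall frame (split $f=f_1+f_2$ via \eqref{apeq1}--\eqref{apeq2} and treat each piece) is the paper's, and your treatment of $f_2$ is correct and essentially the paper's argument in cleaner form: splitting at the intermediate variable $v$ gives $f_2(s,u)=\int_u^s G(s,v)K_H(v,u)\,dv$ with $G(s,v)=-\sigma_v'\,\tfrac{1}{\Gamma(\alpha)}s^{-\alpha}\int_v^s(s-r)^{\alpha-1}r^\alpha\sigma_r^{-1}\partial_x b_r(\phi_r)\,dr$ bounded and $K_H$ Hilbert--Schmidt (since $\int_0^1\!\int_0^v K_H(v,u)^2\,du\,dv=\int_0^1 v^{2H}\,dv<\infty$), so $K(f_2)$ is a product of two Hilbert--Schmidt operators; the paper reaches the same conclusion through the splitting \eqref{eq:frac-rep} and Cauchy--Schwarz.

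The genuine gap is your plan for $f_1$, and it is not a bookkeeping issue: the statement you aim at there is false. A factorization $f_1(s,u)=\int_u^s g(s,w)h(w,u)\,dw$ with $g,h\in L^2([0,1]^2)$ would make $K(f_1)$ \emph{itself}, not merely its symmetrization, a product of two Hilbert--Schmidt operators, hence nuclear. But $K(f_1)$ is a Volterra operator (kernel supported on $\{u\le s\}$), hence quasinilpotent, so by Lidskii's theorem its trace, and therefore the trace of the self-adjoint operator with kernel $\tilde f_1$, would have to vanish. On the other hand, the diagonal limit computed in the proof of Theorem~\ref{result1} is $f_1(s,s)=b_H\Gamma(1-\alpha)\,\partial_x b_s(\phi_s)$, and by the paper's trace identity and Theorem~\ref{estimate2} the trace of the symmetrized operator is $\tfrac12\int_0^1 f_1(s,s)\,ds=\tfrac{d_H}{2}\int_0^1\partial_x b_s(\phi_s)\,ds$, which is nonzero for generic $b$ --- indeed, were it zero, the $d_H\,\partial_x b$ correction in the Onsager--Machlup functional would disappear. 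The classical Volterra operator (kernel $\equiv 1$ on the triangle) is the model for this failure: its kernel is bounded up to the diagonal, its symmetrization is rank one, yet it is not trace class. So ``exploiting the cancellation that makes $f_1$ bounded near the diagonal'' cannot rescue the argument; boundedness near the diagonal is not the obstruction, Volterra support together with a non-vanishing diagonal is. Your own observation that the exponents are exactly borderline with no slack is the symptom of this structural fact, not a technical difficulty that finer Beta-function estimates can remove.

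What is true --- and all that the lemma asserts --- is nuclearity of $K(\tilde f_1)$. Because that operator is self-adjoint, it suffices to bound $\sum_n\big|\langle K(f_1)e_n,e_n\rangle\big|$ with the \emph{same} orthonormal sequence paired with itself, and by Bessel's inequality this is strictly weaker than trace-class-ness of $K(f_1)$: the Volterra operator satisfies it, since $\sum_n\big|\langle Ve_n,e_n\rangle\big|=\sum_n\tfrac12\langle \mathbf{1},e_n\rangle^2\le\tfrac12$. This diagonal-pairing estimate (Fubini through the auxiliary variable of \eqref{eq:frac-rep}, then Cauchy--Schwarz in $n$) is exactly what the paper carries out for $f_2$ and what it invokes, via Lemma~13 of \cite{Nualart}, for the piece $f_1$; your proposal never engages the symmetrization, which for $f_1$ is where the entire content of the lemma lies.
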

\begin{proof} 
In order to prove 
\[
   \sum_{n=1}^\infty \big|\langle K(\tilde{f})e_n, e_n \rangle \big| < \infty,
\]
it suffices to show that
\[
   \sum_{n=1}^\infty \big|\langle K(f_i)e_n, e_n \rangle \big| < \infty, 
   \quad i=1,2,
\]
where $f_1$ and $f_2$ are defined in \eqref{apeq1} and \eqref{apeq2}, respectively.  
The proof is analogous to that of Lemma~13 in \cite{Nualart}.  

By \eqref{khdelta}, the term $f_2$ can be written as
\begin{align*}
    f_2(s,u)
    &= - \int_u^s \! dr \int_u^r \! dv \,
       \frac{1}{\Gamma(\alpha)} s^{-\alpha} (s-r)^{\alpha-1} r^\alpha 
       \sigma_r^{-1}\partial_x b_r(\phi_r) \sigma_v' K_H(v,u) \\
    &= -\frac{s^{-\alpha} u^{-\alpha}}{\Gamma(\alpha)^2 \Gamma(1-2\alpha)}
       \int_u^s \! dr \,(s-r)^{\alpha-1} r^\alpha 
       \sigma_r^{-1}\partial_x b_r(\phi_r) \\
    &\quad \times \int_u^r \! dv \, \sigma_v' 
       \int_u^v x^\alpha (x-u)^{\alpha-1}(v-x)^{-2\alpha}\, dx .
\end{align*}

Moreover, we have the representation
\begin{equation}\label{eq:frac-rep}
   (v-x)^{-2\alpha} 
   = \int_x^v (v-\xi)^{-1/2-\alpha+\delta}(\xi-x)^{-1/2-\alpha-\delta}\, d\xi, 
   \qquad |\delta|<H .
\end{equation}

By the expression of $K(f_2)$:
\begin{align*}
   &\sum_{n=1}^\infty \big|\langle K(f_2)e_n,e_n \rangle\big| \\
   =& \sum_{n=1}^\infty \Bigg|\int_0^1 ds \int_0^s du \int_u^s dr \int_u^r dv \int_u^v dx \int_x^v d\xi \,
       C s^{-\alpha} (s-r)^{\alpha-1} r^\alpha 
       \sigma_r^{-1}\partial_x b_r(\phi_r)\sigma_v' \\
   &\quad \qquad \times u^{-\alpha} x^\alpha (x-u)^{\alpha-1} 
       (v-\xi)^{-1/2-\alpha+\delta}(\xi-x)^{-1/2-\alpha-\delta} 
       e_n(s)e_n(u)\Bigg| \\
  \leq & \sum_{n=1}^\infty \Bigg|\int_0^1 d\xi \int_\xi^1 ds \int_\xi^s dr \int_\xi^r dv \int_0^\xi du \int_u^\xi dx \,
       C s^{-\alpha} (s-r)^{\alpha-1} r^\alpha 
       \sigma_r^{-1}\partial_x b_r(\phi_r)\sigma_v' \\
  & \quad\qquad \times u^{-\alpha} (x^\alpha-u^\alpha)(x-u)^{\alpha-1} 
       (v-\xi)^{-1/2-\alpha+\delta}(\xi-x)^{-1/2-\alpha-\delta} 
       e_n(s)e_n(u)\Bigg| .
\end{align*}
According to Cauchy's inequality, 
\begin{align*}
   &\sum_{n=1}^\infty \big|\langle K(f_2)e_n,e_n \rangle\big| \\
   &\leq C \int_0^1 d\xi 
     \Bigg\{\int_\xi^1 ds 
     \Bigg(\int_\xi^s dr \int_\xi^r dv \, 
       s^{-\alpha}(s-r)^{\alpha-1} r^\alpha 
       \sigma_r^{-1}\partial_x b_r(\phi_r)\sigma_v' 
       (v-\xi)^{-1/2-\alpha+\delta}\Bigg)^2 \Bigg\} \\
   &\quad \times \Bigg\{\int_0^\xi du 
     \Bigg(\int_u^\xi dx \, 
       u^{-\alpha} x^\alpha (x-u)^{\alpha-1} 
       (\xi-x)^{-1/2-\alpha-\delta}\Bigg)^2 \Bigg\}.
\end{align*}
Using elementary estimates and bounding $\sigma_r^{-1}\partial_x b_r(\phi_r)$ and $\sigma'_v$ uniformly, we obtain
\begin{align*}
   \sum_{n=1}^\infty \big|\langle K(f_2)e_n,e_n \rangle\big|
   &\leq C \int_0^1 d\xi 
      \Bigg\{\int_\xi^1 ds 
        \Bigg(\int_\xi^s dr \, s^{-\alpha}(s-r)^{\alpha-1}r^\alpha (r-\xi)^{1/2-\alpha-\delta}\Bigg)^2 \Bigg\} \\
   &\quad \times \Bigg\{\int_0^\xi du \, u^{-2\alpha} 
        \Bigg(\int_u^\xi dx \, x^\alpha (x-u)^{\alpha-1} (\xi-x)^{-1/2-\alpha-\delta}\Bigg)^2 \Bigg\}.
\end{align*}
Estimating each factor separately yields
\begin{align*}
   \sum_{n=1}^\infty \big|\langle K(f_2)e_n,e_n \rangle\big|
   &\leq C \int_0^1 d\xi 
        \Bigg\{\int_\xi^1 (s-\xi)^{1-2\delta}\, ds \Bigg\} 
        \Bigg\{\int_0^\xi u^{-2\alpha}\, \xi^{2\alpha}(\xi-u)^{-1-2\delta}\, du \Bigg\} \\
   &\leq C \int_0^1 (1-\xi)^{2-2\delta}\, \xi^{-2\delta}\, d\xi < \infty .
\end{align*}

Here the constant $C$ depends only on $H$, $\sup_{[0,1]}|\sigma'|$, and 
$\sup_{[0,1]}|\sigma_s^{-1}\partial_x b_s(\phi_s)|$.  
This proves the desired summability.

\end{proof}

\begin{lemma}
   Let $\tfrac{1}{2} < H < 1$, and define
   \begin{align*}
         f(s,u)&=\frac{c_H\Gamma(\alpha)}{\Gamma(1-\alpha)}	
	(
	 		s^{-\alpha}\sigma_s^{-1}\partial_x b(\phi_s)u^{-\alpha}I_{s^-}^\alpha u^\alpha  \sigma_u  \\
	 		&+\int_0^s \frac{\alpha \sigma_s^{-1}\partial_x b(\phi_s)u^{-\alpha}I_{s^-}^\alpha u^\alpha 
		\sigma_u}{(s-r)^{\alpha+1}}dr\\
		& -\int_u^s  \frac{\alpha s^\alpha r^{-\alpha} \sigma_r^{-1}\partial_x b(\phi_r) u^{-\alpha}I_{r^-}^\alpha u^\alpha  \sigma_u }{(s-r)^{\alpha+1}}dr .
   \end{align*}
    Then the Hilbert–Schmidt operator $K(\tilde{f})$ associated with the symmetrized function $\tilde{f}$ is a kernel operator.
  
\end{lemma}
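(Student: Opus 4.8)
The plan is to verify the nuclearity criterion directly, namely that $\sum_{n=1}^\infty \big|\langle K(\tilde f)e_n,e_n\rangle\big|<\infty$ for every orthonormal sequence $(e_n)\subset L^2([0,1])$, following the scheme of the preceding lemma for $\tfrac14<H<\tfrac12$ and of Lemma~13 in \cite{Nualart}. First I would recast the inner stochastic factor in $f$ against $dW$ rather than $dB^H$: using the integration-by-parts identity $\int_0^s\sigma_u\,dB^H_u=\sigma_sB^H_s-\int_0^s\sigma_u'B^H_u\,du$ together with $B^H_r=\int_0^rK_H(r,w)\,dW_w$, one rewrites $f(s,u)$ in a ``$K_H$-form'' analogous to \eqref{fsu}, i.e. as an $r$-integral of $s^\alpha D^\alpha_{0^+}s^{-\alpha}$ applied to a kernel built from $\sigma_rK_H(r,u)-\int_u^r\sigma_v'K_H(v,u)\,dv$. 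The decisive simplification relative to the singular case is that for $\tfrac12<H<1$ one has $\alpha=H-\tfrac12\in(0,\tfrac12)$, so that $K_H(r,u)=c_Hu^{-\alpha}\int_u^r(w-u)^{\alpha-1}w^\alpha\,dw\lesssim u^{-\alpha}r^\alpha(r-u)^\alpha$ satisfies $K_H(r,\cdot)\in L^2(0,r)$ (its only singularity $u^{-\alpha}$ being square-integrable); consequently no auxiliary splitting parameter $\delta$ as in \eqref{eq:frac-rep} is required here.

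Next I would separate $f$ into its ``diagonal'' and ``difference'' parts arising from the Weyl representation \eqref{Weyl} of the operator $s^\alpha D^\alpha_{0^+}s^{-\alpha}$ acting in $s$. The diagonal part reproduces the combination $D_1+D_2$ from the proof of Theorem~\ref{result2}: the explicit $r$-integration $\int_0^u(s-r)^{-\alpha-1}\,dr=\alpha^{-1}\bigl[(s-u)^{-\alpha}-s^{-\alpha}\bigr]$ cancels the $s^{-\alpha}$ contribution of $D_1$, and after the substitution $v=u+t(s-u)$ the remaining factor $(s-u)^{-\alpha}I^\alpha_{s^-}(u^\alpha\sigma_u)$ collapses to a bounded Lipschitz kernel $G(s,u)=\Gamma(\alpha)^{-1}\int_0^1 t^{\alpha-1}\bigl(u+t(s-u)\bigr)^\alpha\sigma_{u+t(s-u)}\,dt$, leaving a kernel of the form $\sigma_s^{-1}\partial_xb_s(\phi_s)\,u^{-\alpha}G(s,u)$. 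Nuclearity of this piece then follows because the associated operator factors as multiplication by bounded functions composed with a Volterra operator of Lipschitz kernel, the integrable weight $u^{-\alpha}$ being controlled (using $2\alpha<1$) by one application of the Cauchy--Schwarz and Bessel inequalities as below.

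For the remaining difference part $D_3$ I would introduce an auxiliary variable $\xi\in(u,s)$ separating the $u$-block from the $(s,r)$-block, swap the order of integration so that $\xi$ is outermost, and write $\langle K(D_3)e_n,e_n\rangle=\int_0^1 d\xi\,\bigl(\int_\xi^1 g_\xi(s)e_n(s)\,ds\bigr)\bigl(\int_0^\xi h_\xi(u)e_n(u)\,du\bigr)$. Applying the Cauchy--Schwarz inequality in $n$ and then Bessel's inequality bounds $\sum_n|\langle K(D_3)e_n,e_n\rangle|$ by $\int_0^1\|g_\xi\|_{L^2(\xi,1)}\,\|h_\xi\|_{L^2(0,\xi)}\,d\xi$; the inner $r$- and $w$-integrations produce positive powers of $(s-\xi)$ and $(\xi-u)$ that I would estimate by Beta integrals to obtain bounds of the shape $\|g_\xi\|_{L^2(\xi,1)}\le C(1-\xi)^{p}$ and $\|h_\xi\|_{L^2(0,\xi)}\le C\xi^{-q}$ with $p>-1$ and $q<1$, so that the final $\xi$-integral converges.

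The main obstacle will be the difference part $D_3$, exactly as $f_2$ was in the singular case: the numerator difference between $s^{-\alpha}\sigma_s^{-1}\partial_xb_s(\phi_s)I^\alpha_{s^-}(u^\alpha\sigma_u)$ and $r^{-\alpha}\sigma_r^{-1}\partial_xb_r(\phi_r)I^\alpha_{r^-}(u^\alpha\sigma_u)$ must be shown to be Hölder in $r$ with an exponent strictly larger than $\alpha$, in order to absorb the nonintegrable singularity $(s-r)^{-\alpha-1}$. This is where the regularity hypotheses (namely $b\in C^2$ and $\sigma\in C^1$, hence continuity of $r\mapsto r^{-\alpha}\sigma_r^{-1}\partial_xb_r(\phi_r)$) and the estimate $\bigl|I^\alpha_{s^-}(u^\alpha\sigma_u)-I^\alpha_{r^-}(u^\alpha\sigma_u)\bigr|\le C(s-r)^\alpha$ established in the proof of Theorem~\ref{result2} are essential. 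Careful attention to the endpoint behaviour as $u\to0$ and $s\to1$ will also be needed to guarantee that the exponents $p,q$ above remain in the admissible range and the $\xi$-integral stays finite.
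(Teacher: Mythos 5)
Your overall strategy (verify the diagonal-sum criterion $\sum_n|\langle K(\tilde f)e_n,e_n\rangle|<\infty$ by inserting a separating variable and applying Cauchy--Schwarz in $n$ plus Bessel) is the right family of techniques: it is what the paper's appendix does for $\tfrac14<H<\tfrac12$, and the paper itself disposes of the present lemma in one line by invoking Lemma~14 of \cite{Nualart} after replacing the kernel $u^{-\alpha}\int_u^s(x-u)^{\alpha-1}x^\alpha\,dx$ by $u^{-\alpha}\int_u^s(x-u)^{\alpha-1}x^\alpha\sigma_x\,dx$. But your sketch has genuine gaps, and they sit exactly at the two points where that reference's argument is delicate. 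First, your treatment of the diagonal part $D_1+D_2$ is not a proof. The algebra showing it collapses to $\sigma_s^{-1}\partial_x b_s(\phi_s)\,u^{-\alpha}G(s,u)$ with $G$ bounded is correct (although $G$ is only $\alpha$-H\"older, not Lipschitz, because $x\mapsto x^\alpha\sigma_x$ is merely $\alpha$-H\"older at the origin), but ``bounded Volterra-type kernel, hence nuclear'' is not a valid inference: the plain Volterra operator $h\mapsto\int_0^\cdot h$ has a smooth kernel and is not trace class. What must actually be verified is the diagonal-sum criterion, i.e.\ trace-classness of the \emph{symmetrized} operator; conjugating the Volterra operator by two \emph{different} bounded multiplications destroys the rank-one structure of its symmetric part, and H\"older regularity of order $\alpha<\tfrac12$ of the symmetrized kernel is below every classical trace-class threshold. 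So this piece still needs a genuine separation argument, which you do not supply.

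Second, your claim that ``no auxiliary splitting parameter $\delta$ as in \eqref{eq:frac-rep} is required'' is precisely where the bookkeeping fails. The square-integrability of $K_H(r,\cdot)$ is not the issue; the issue is that whenever you use the internal variable $v$ of $I^\alpha_{s^-}(u^\alpha\sigma_u)=\Gamma(\alpha)^{-1}\int_u^s(v-u)^{\alpha-1}v^\alpha\sigma_v\,dv$ as the separating variable $\xi$, the block attached to $u$ carries the factor $(v-u)^{\alpha-1}$ with $\alpha-1<-\tfrac12$ (since $\alpha=H-\tfrac12<\tfrac12$), which is \emph{not} square integrable near $u=v$; hence $\|h_\xi\|_{L^2(0,\xi)}=\infty$ and the Cauchy--Schwarz/Bessel step collapses. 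This is the same obstruction that the $\delta$-trick resolves in the singular case, and it does not vanish for $H>\tfrac12$. Relatedly, for $D_3$ you correctly state that the numerator must be H\"older in $r$ of order \emph{strictly greater} than $\alpha$, yet the estimate you then cite from Theorem~\ref{result2}, $|I^\alpha_{s^-}(u^\alpha\sigma_u)-I^\alpha_{r^-}(u^\alpha\sigma_u)|\le C(s-r)^\alpha$, gives exactly $\alpha$ and leaves an integrand of size $(s-r)^{-1}$, which is not integrable. Repairing this forces a trade-off between regularity in $(s-r)$ and an admissible singularity in $(r-u)$, e.g.\ the interpolated bound $|I^\alpha_{s^-}-I^\alpha_{r^-}|\le C(r-u)^{-\theta(1-\alpha)}(s-r)^{\alpha+\theta(1-\alpha)}$ for $\theta\in(0,1)$, followed by a Beta-function splitting of the resulting exponents in the separation step --- which is, in substance, the content of Moret--Nualart's Lemma~14 that the paper imports wholesale. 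As written, your proposal reproduces the routine computations and passes over the two steps that constitute the actual content of the lemma.
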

\begin{proof}
    The proof can be established by following the argument in Lemma~14 of \cite{Nualart} . 
In our setting, it is sufficient to replace the kernel 
\[
K_H(s,u) = C u^{-\alpha} \int_u^s (x-u)^{\alpha-1} x^\alpha \, dx
\]
in Lemma~14 of  \cite{Nualart} with
\[
C u^{-\alpha} \int_u^s (x-u)^{\alpha-1} x^\alpha \sigma_x \, dx,
\]
and then observe that \(x^\alpha \sigma_x\) possesses the same H\"older continuity as \(x^\alpha\).

\end{proof}

\bibliographystyle{plain}
\bibliography{math.bib}

\end{document}